\documentclass[11pt,a4paper]{article}
\usepackage{titlesec}
\usepackage{a4wide}
\usepackage{graphicx}
\usepackage{float}
\usepackage{amssymb}
\usepackage{amsmath}
\usepackage{amsthm}
\usepackage{appendix}
\usepackage{color}
\usepackage{array}
\usepackage{eucal}
\usepackage{mathrsfs}
\usepackage{esint}
\usepackage{lmodern}
\usepackage{tikz}
\usepackage{bbm}
\usepackage{hyperref}
\usepackage{geometry}
\usepackage{changepage}
\geometry{hmargin=2.3cm, vmargin=2.3cm }
\changepage{0pt}{}{}{}{}{0pt}{}{0pt}{10pt}
\usepackage[numbers]{natbib}
\setlength{\bibsep}{0.0pt}
\hypersetup{
	pdfpagemode=UseThumbs,
	pdftoolbar=true,        
	pdfmenubar=true,        
	pdffitwindow=false,     
	pdfstartview={Fit},    
	pdftitle={Generalized impedance boundary conditions with vanishing or sign-changing impedance},    
	pdfauthor={L. Bourgeois, L. Chesnel},     
	pdfsubject={},  
	pdfcreator={L. Bourgeois, L. Chesnel},   
	pdfproducer={L. Bourgeois, L. Chesnel}, 
	pdfkeywords={}, 
	pdfnewwindow=true,      
colorlinks=true,       
linkcolor=magenta,          
citecolor=red,        
filecolor=cyan,      
urlcolor=blue           
}
\usetikzlibrary{shapes.misc}

\newcommand{\dsp}{\displaystyle}

\newcommand{\eps}{\varepsilon}
\newcommand{\om}{\omega}
\newcommand{\Om}{\Omega}
\newcommand{\mrm}[1]{\mathrm{#1}}

\newcommand{\Cplx}{\mathbb{C}}
\newcommand{\N}{\mathbb{N}}
\newcommand{\R}{\mathbb{R}}

\newcommand{\mL}{\mrm{L}}
\newcommand{\mH}{\mrm{H}}
\newcommand{\mV}{\mrm{V}}

\newcommand{\mX}{\mrm{X}}

\newcommand{\mW}{\mrm{W}}

\usepackage{bbm}

\newtheorem{theorem}{Theorem}[section]
\newtheorem{lemma}[theorem]{Lemma}
\newtheorem{remark}[theorem]{Remark}

\newtheorem{proposition}[theorem]{Proposition}

\newcommand {\be}{\begin{equation}}
\newcommand {\ee}{\end{equation}}

\usepackage{dsfont}
\everymath{\displaystyle}
\setlength\parindent{0pt}

\setcounter{secnumdepth}{2}
\setcounter{tocdepth}{2}
\begin{document}
\begin{center}
{\sc \bf\fontsize{20}{20}\selectfont  	
Generalized impedance boundary conditions \\[6pt]
with vanishing or sign-changing impedance}
\end{center}
\begin{center}
	\textsc{Laurent Bourgeois}$^1$, \textsc{Lucas Chesnel}$^2$\\[16pt]
	\begin{minipage}{0.9\textwidth}
		{\small
$^1$  POEMS, CNRS, Inria, ENSTA Paris, Institut Polytechnique de Paris, 91120 Palaiseau, France;\\
$^2$ Inria, ENSTA Paris, Institut Polytechnique de Paris, 91120 Palaiseau, France.\\[10pt]
			E-mails:
			\texttt{Laurent.Bourgeois@ensta-paris.fr}, \texttt{Lucas.Chesnel@inria.fr}.\\[-14pt]
			\begin{center}
				(\today)
			\end{center}
		}
	\end{minipage}
\end{center}
\textbf{Abstract:} We consider a Laplace type problem with a generalized impedance boundary condition of the form $\partial_\nu u=-\partial_x(g\partial_xu)$ on a flat part $\Gamma$ of the boundary. Here $\nu$ is the outward unit normal vector to $\partial\Om$, $g$ is the impedance function and $x$ is the coordinate along $\Gamma$.  Such problems appear for example in the modelling of small perturbations of the boundary. In the literature, the cases $g=1$ or $g=-1$ have been investigated. In this work, we address situations where $\Gamma$ contains the origin and $g(x)=\mathds{1}_{x>0}(x)x^\alpha$ or $g(x)=-\mrm{sign}(x)|x|^\alpha$ with $\alpha\ge0$. 
In other words, we study cases where $g$ vanishes at the origin and changes its sign. The main message is that the well-posedness in the Fredholm sense of the corresponding problems depends on the value of $\alpha$. For $\alpha\in[0,1)$, we show that the associated operators are Fredholm of index zero while it is not the case when $\alpha=1$. The proof of the first results is based on the reformulation as 1D problems combined with the derivation of compact embedding results for the functional spaces involved in the analysis. The proof of the second results relies on the computation of singularities and the construction of Weyl's sequences. We also discuss the equivalence between the strong and weak formulations, which is not straightforward. Finally, we provide simple numerical experiments which seem to corroborate the theorems.\\[6pt]
\textbf{Key words:} Generalized impedance boundary conditions, Ventcel boundary conditions, vanishing impedance, sign-changing impedance\\[6pt]

\section{Introduction}

Generalized Impedance Boundary Conditions (GIBCs) are often used in the context of asymptotic analysis for partial differential equations to obtain simplified models. Imagine for example that one is interesting in the scattering of an electromagnetic wave by an inclusion of perfectly conducting material coated with a thin dielectric layer of variable thickness. One can show that the solution of the corresponding problem is well approximated by the solution of a scattering problem for the inclusion alone supplemented with an \textit{ad hoc} second-order GIBC. In this model, the complexity of the initial geometry is incorporated in the boundary condition. This can be useful in particular to reduce computational costs because it allows one to avoid meshing the thin layer, see e.g. \cite{aslanyurek_haddar_sahinturk}. Note that this approach can also be exploited to solve the inverse problem consisting in finding information on the obstacle from the measurement of scattered fields  \cite{BoCH11,bourgeois_chaulet_haddar}. For modelling aspects and derivation of GIBCs in electromagnetism, we refer the reader to \cite{HaJN05}. Similarly, in aeroacoustics, the so-called Ingard-Myers boundary conditions are used to model the presence of a liner located on the surface of a duct \cite{luneville_mercier}. They also consist of a second-order GIBC.\\

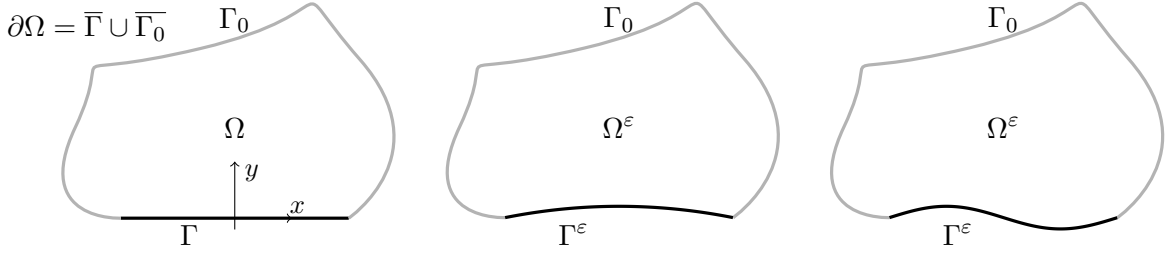
\begin{figure}[!ht]
\centering
\begin{tikzpicture}[scale=1.5]
\draw[very thick,black] (-1,0)--(1,0);
\draw[very thick,gray!60] plot [smooth, tension=2] coordinates {(-1,0)(-1.4,0.8)(-0.3,1.5)(1.1,1.4)(1,0)};
\node at (-0.4,-0.15){$\Gamma$};
\node at (0,1.75){$\Gamma_0$};
\node at (0,0.8){$\Om$};
\node at (-1.3,1.7){$\partial\Om=\overline{\Gamma}\cup\overline{\Gamma_0}$};
\begin{scope}[shift={(-3.1,-0.2)}]
\draw[->] (3,0.2)--(3.6,0.2);
\draw[->] (3.1,0.1)--(3.1,0.7);
\node at (3.65,0.3){\small $x$};
\node at (3.25,0.6){\small $y$};
\end{scope}
\end{tikzpicture}\hspace{-0.4cm}\begin{tikzpicture}[scale=1.5]
\draw[very thick,gray!60] plot [smooth, tension=2] coordinates {(-1,0)(-1.4,0.8)(-0.3,1.5)(1.1,1.4)(1,0)};
\node at (-0.4,-0.15){$\Gamma^\eps$};
\node at (0,1.75){$\Gamma_0$};
\node at (0,0.8){$\Om^\eps$};
\draw[black,very thick] plot[domain=-1:1,samples=100] (\x,{0.1*(1-\x*\x)});
\end{tikzpicture}\hspace{-0.4cm}\begin{tikzpicture}[scale=1.5]
\draw[very thick,gray!60] plot [smooth, tension=2] coordinates {(-1,0)(-1.4,0.8)(-0.3,1.5)(1.1,1.4)(1,0)};
\node at (-0.4,-0.15){$\Gamma^\eps$};
\node at (0,1.75){$\Gamma_0$};
\node at (0,0.8){$\Om^\eps$};
\draw[black,very thick] plot[domain=-1:1,samples=100] (\x,{-0.1*(sin(180*\x)});
\end{tikzpicture}
\caption{Left: domain $\Om$. Center and right: examples of perturbed domains $\Om^\eps.$\label{Domain}}
\end{figure}

In order to describe the content of this article, let us present in more details another situation where GIBCs arise. Let $\Om\subset\R^2$ be an open, connected, bounded set with a Lipschitz continuous boundary $\partial\Om$. Additionally, assume that there holds $\Gamma\subset\partial\Om$ with $\Gamma:=(-1,1)\times\{0\}$ and  set $\Gamma_0:=\partial\Om\setminus\overline{\Gamma}$ (see Figure \ref{Domain} left). Now let us perturb slightly, in a smooth way, the boundary of $\Om$. To proceed, introduce some smooth profile function $g$ supported in $[-1,1]$ satisfying $g(-1)=g(1)=0$ and for $\eps>0$ small, define the domain $\Om^\eps$ such that 
\[
\partial\Om^\eps=\overline{\Gamma_0}\cup\Gamma^\eps\quad\mbox{ with }\quad\Gamma^\eps:=\{(x,\eps g(x))\,|\,x\in(-1,1)\}.
\]
Examples of such geometries are given in Figure \ref{Domain} center and right. In $\Om^\eps$, we study the model problem
\be \label{fortEps} 
\begin{array}{|rccl}
-\Delta u^\eps + u^\eps &=& f&  \mbox{in } \Omega^\eps\\[3pt]
\partial_{\nu^\eps} u^\eps&=&0 & \mbox{on } \partial\Om^\eps,
\end{array}	
\ee
where $f$ is a given source term which vanishes in a neighbourhood of $\Gamma$ and $\nu^\eps$ stands for the outward unit normal vector to $\partial\Om^\eps$. Let us recall how to obtain formally an asymptotic expansion of $u^\eps$ with respect to $\eps$ small. Consider the ansatz 
\begin{equation}\label{Expansion}
u^{\eps}=u_0+\eps u_1+\dots
\end{equation}
where $u_0$, $u_1$ are functions to determine and where the dots correspond to higher order terms. On $\Gamma^\eps$, we have the expansions
\begin{equation}\label{ExpansionNormal}
\nu^\eps = \cfrac{1}{\sqrt{1+\eps^2(g'(x))^2}}\,\left( \begin{array}{c}
\eps g'(x)\\
-1
\end{array}
\right)=\left( \begin{array}{c}
0\\
-1
\end{array}
\right)+\eps\left( \begin{array}{c}
g'(x)\\
0
\end{array}
\right)+\dots
\end{equation}
\begin{equation}\label{TaylorExpansion}
\begin{array}{rcl}
\nabla u^\eps(x,\eps g(x))&=&\nabla u^\eps(x,0)+\eps g(x)\left( \begin{array}{c}
\partial^2_{xy}u_\eps(x,0)\\[2pt]
\partial^2_{yy}u_\eps(x,0)
\end{array}
\right)+\dots\\[15pt]
&=&\nabla u^\eps(x,0)+\eps g(x)\left( \begin{array}{c}
\partial^2_{xy}u_\eps(x,0)\\[2pt]
-\partial^2_{xx}u_\eps(x,0)+u_\eps(x,0)
\end{array}
\right)+\dots\,.
\end{array}
\end{equation}
Now we insert (\ref{Expansion}) in (\ref{fortEps}) and exploit  (\ref{ExpansionNormal}), (\ref{TaylorExpansion}). Collecting the terms of orders $\eps^0$, $\eps^1$, we find that $u_0$, $u_1$ satisfy respectively the problems
\be \label{fort0} 
\begin{array}{|rccl}
-\Delta u_0 + u_0 &=& f&  \mbox{in } \Omega\\[3pt]
\partial_{\nu} u_0&=&0 & \mbox{on } \partial\Om
\end{array}\qquad\qquad \begin{array}{|rccl}
-\Delta u_1 + u_1 &=& 0&  \mbox{in } \Omega\\[3pt]
\partial_{\nu} u_1&=&0 & \mbox{on } \Gamma_0\\[3pt]
\partial_{\nu} u_1&=&-\partial_x(g(x)\partial_x u_0)+g(x)u_0 & \mbox{on } \Gamma.
\end{array}
\ee
Then by rectifying the boundary of $\Om^\eps$ using ``almost identical'' diffeomorphisms to transform the perturbed domain into the original geometry $\Om$ (see e.g. \cite[Chap.\,7,\,\S6.5]{Kato95}), under additional assumptions of regularity for $f$, one can prove the estimate, for $\eps$ small enough,
\[
\|u^\eps-(u_0+\eps u_1)\|_{\mH^1(\Om\setminus\overline{\om})} \le C\eps^2,
\]
where $C$ is a constant independent of $\eps$ and $\om$ is a neighbourhood of $\Gamma$. In practice, as mentioned above, instead of computing successively $u_0$, $u_1$, $\dots$, very often one prefers to work with a model problem, whose dependence with respect to $\eps$ is rather explicit, which provides via a simple calculation an approximation of $u^\eps$ up to a given order. In our case, to approach $u_0+\eps u_1$ in one shot, one can consider the problem 
\be \label{fortModel} 
\begin{array}{|rccl}
-\Delta u + u &=& f&  \mbox{in } \Omega\\[3pt]
\partial_{\nu} u&=&0 & \mbox{on } \Gamma_0\\[3pt]
\partial_{\nu} u&=&-\eps\partial_x(g(x)\partial_x u)+\eps g(x)u & \mbox{on } \Gamma.
\end{array}
\ee
The condition on $\Gamma$ appearing in (\ref{fortModel}), which makes the analysis of this problem not straightforward, is a particular instance of the so-called Ventcel boundary conditions, which are themselves a subclass of GIBCs. Ventcel boundary conditions are second order differential conditions which have been named after the pioneering works of Feller and Ventcel \cite{Fell52,Vent56,Fell57,Vent59}. Since then, they have been many studies concerning the Laplace operator with Ventcel boundary conditions \cite{FGGOR03,AMPR03,DaKa12,CaDK13,DKL16,CLNV19,bonnaillie_dambrine_herau_vial,chamaillard_chaulet_haddar}. For investigations in non smooth domains, one can consult \cite{Lemr85,PoSl13,NiLM17}. The value of $\eps>0$ as well as the term $\eps gu$ plays no major role in the well-posedness in the Fredholm sense of (\ref{fortModel}) and to simplify, we shall work with the condition
\begin{equation}\label{InitialCondition}
\partial_\nu u =-\partial_x(g(x) \partial_x u)\quad\mbox{ on }\Gamma,
\end{equation}
where $\nu$ is the outward unit normal vector to $\partial\Om$. Up to now, in most of the articles mentioned above only the cases $g=-1$ or $g=1$ on the whole boundary have been considered. However, for certain problems, as the one which led us to (\ref{fortModel}), it may be relevant to consider some $g$ which vanish or whose sign is not constant on $\Gamma$.
For instance, such situation arises in a water wave problem where the bottom shape small variation of the ocean with respect to a reference flat depth is approached by the boundary condition (\ref{InitialCondition}), like in \cite{bourgeois_mercier_terrine}. In this case the function $g$ coincides with this shape variation and is likely to vanish or change sign. Then it is natural to wonder what can be said concerning the well-posedness of the corresponding problem in that situations. This is precisely the goal of the present article to address this question. Note that a variable $g$ is allowed in \cite{GrVi20} but it does not vanish. Besides, let us mention that degenerate elliptic problems are studied in \cite{NCPDC20} in the context of modelling of resonant waves in 2D plasma. These problems share similarities with ours but are nonetheless different.\\
\newline
Below, we will focus our attention on two model problems. Let us divide $\Gamma$ into the two segments 
\[
\Gamma_-:=(-1,0)\times\{0\}\qquad\mbox{ and }\qquad\Gamma_+:=(0,1)\times\{0\}. 
\]
To study the case of a $g$ which vanishes at a point (the origin), we will work on the variational problem whose strong formulation is
\be \label{fort} 
\begin{array}{|rccl}
-\Delta u + u &=& f&  \mbox{in } \Omega\\[3pt]
\partial_\nu u&=&0 & \mbox{on } \partial\Om\setminus\overline{\Gamma_+}\\[3pt]
\partial_\nu u &=& s\,\partial_x (x^\alpha\, \partial_x u) & \mbox{on } \Gamma_+,
\end{array}	
\ee
where $s=1$ or $s=-1$ and $\alpha \geq 0$ is a real number.
Moreover here and in the rest of the article, $f$ is a given element of $\mL^2(\Om)$. 

To address the situation where $g$ changes sign in (\ref{InitialCondition}), we will study the variational problem whose strong formulation is
\be\label{fort_bis}
\begin{array}{|rccl}
-\Delta u+u &=& f&  \mbox{in } \Omega\\
\partial_\nu u &=& 0&  \mbox{on } \Gamma_0\\
\partial_\nu u&=& -\partial_x (|x|^\alpha\, \partial_x u) & \mbox{on } 
\Gamma_-\\
\partial_\nu u &=& \partial_x (x^\alpha\, \partial_x u) & \mbox{on } \Gamma_+,
\end{array}
\ee
with again $\alpha\ge0$. 
In (\ref{fort}) and (\ref{fort_bis}), the coefficient $\alpha$
can be viewed as a simple polynomial rate which characterizes how fast the impedance vanishes at the origin.
The most relevant case is probably $\alpha=1$, which in (\ref{fort_bis}) corresponds to a smooth change of sign. The situation $\alpha=0$, i.e. a jump of the impedance function which may seem less realistic because one needs smoothness in the asymptotic expansion leading to (\ref{fortModel}), is also considered in our paper. We will see that the value of $\alpha$ plays a crucial role in the results of well-posedness.\\
\newline
The outline is as follows. We start by presenting the problems and the main results in Section \ref{SectionMainResults}. In Section \ref{Section1D}, we prove a series of results concerning weighted Sobolev spaces in 1D which will be used in the analysis. In Section \ref{SectionAlphaMoins1}, we establish the well-posedness in the Fredholm sense of the operator, denoted by $A$, see (\ref{DefOpA}), associated with the variational formulation leading to (\ref{fort}) in the case $\alpha\in[0,1)$. Then we prove that for $\alpha=1$, $A$ is not of Fredholm type (see Section \ref{SectionWithSingus}). In Section \ref{SectionVanishing}, we study the operator associated with the variational formulation leading to (\ref{fort_bis}). In Section \ref{SectionWeakStrong}, we analyse the equivalence between strong and weak formulations. Finally, we present the results of simple numerical experiments which seem to corroborate our theorems before giving a few concluding remarks in Section \ref{SectionConcludingRemarks}.

\section{Main results}\label{SectionMainResults}

\subsection{Definition of the problems}

To consider the case of an impedance vanishing at the origin, we introduce the space 
\[
\mV_\alpha(\Om):=\{v \in \mH^1(\Omega)\,|\,x^{\alpha/2} \partial_x v \in \mL^2(\Gamma_+)\}\]		
and study the weak formulation:
\be\label{faible}
\begin{array}{|l}
\mbox{Find }u\in\mV_\alpha(\Om)\mbox{ such that for all }v \in \mV_\alpha(\Om)\\[4pt]
\int_\Omega \nabla u\cdot \nabla \overline{v}+u\overline{v}\,dxdy + s\int_{\Gamma_+}x^\alpha\, \partial_x u\, \partial_x \overline{v}\,dx=\int_\Omega f\overline{v}\,dxdy. 
\end{array}	
\ee 
Here the term $x^\alpha$ is our impedance function. We denote by $a(\cdot,\cdot)$ the sesquilinear form appearing in the left hand side of (\ref{faible}) and by $\ell(\cdot)$ the antilinear form of the right hand side. For the space $\mV_\alpha(\Om)$, we have the following result.
\begin{proposition}\label{hilbert}
For all $\alpha \geq 0$,  $\mV_\alpha(\Om)$ is a Hilbert space when equipped with the inner product
\[(u,v)_{\mV_\alpha(\Om)}=\int_\Omega \nabla u\cdot \nabla \overline{v}+u\overline{v}\,dxdy + \int_{\Gamma_+}x^\alpha\, \partial_x u\, \partial_x \overline{v}\,dx.\]
\end{proposition}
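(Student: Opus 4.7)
The plan is to verify first that $(\cdot,\cdot)_{\mV_\alpha(\Om)}$ is genuinely an inner product, and then to establish completeness. Sesquilinearity and conjugate symmetry are immediate from the corresponding properties of the $\mH^1(\Om)$ inner product and of the complex $\mL^2(\Gamma_+)$ inner product of $x^{\alpha/2}\partial_x u$ and $x^{\alpha/2}\partial_x v$ (the weight $x^\alpha$ is real and nonnegative on $\Gamma_+$). Positive definiteness is also straightforward: if $(u,u)_{\mV_\alpha(\Om)}=0$, then in particular $\|u\|_{\mH^1(\Om)}=0$, so $u=0$; the added boundary integral is nonnegative and so preserves the norm structure. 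The only nontrivial point is therefore completeness.

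To prove completeness, I would take a Cauchy sequence $(u_n)\subset\mV_\alpha(\Om)$ for $\|\cdot\|_{\mV_\alpha(\Om)}$. Since by definition of the norm one has the elementary bound $\|v\|_{\mH^1(\Om)}\le\|v\|_{\mV_\alpha(\Om)}$ for every $v\in\mV_\alpha(\Om)$, the sequence $(u_n)$ is Cauchy in $\mH^1(\Om)$, hence converges to some $u\in\mH^1(\Om)$. Simultaneously, the sequence $(x^{\alpha/2}\partial_x u_n|_{\Gamma_+})$ is Cauchy in $\mL^2(\Gamma_+)$ and thus converges to some $w\in\mL^2(\Gamma_+)$. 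It remains to identify $w$ with $x^{\alpha/2}\partial_x u|_{\Gamma_+}$, which will simultaneously show that $u\in\mV_\alpha(\Om)$ and that $u_n\to u$ in the norm of $\mV_\alpha(\Om)$.

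The identification is the main obstacle and I would handle it by a distributional argument on $(0,1)$. By the standard trace theorem, $u_n|_{\partial\Om}\to u|_{\partial\Om}$ in $\mH^{1/2}(\partial\Om)$; restricting to $\Gamma_+$ and differentiating tangentially, we get $\partial_x u_n|_{\Gamma_+}\to \partial_x u|_{\Gamma_+}$ in $\mH^{-1/2}(\Gamma_+)$, and in particular in $\mathcal{D}'((0,1))$. Multiplication by the function $x\mapsto x^{\alpha/2}$, which for $\alpha\ge 0$ is bounded on $(0,1)$ and smooth on $(0,1)$, is continuous on $\mathcal{D}'((0,1))$, so
\[
x^{\alpha/2}\partial_x u_n|_{\Gamma_+}\longrightarrow x^{\alpha/2}\partial_x u|_{\Gamma_+}\quad\text{in }\mathcal{D}'((0,1)).
\]
On the other hand, $\mL^2(\Gamma_+)$ convergence $x^{\alpha/2}\partial_x u_n|_{\Gamma_+}\to w$ also implies convergence in $\mathcal{D}'((0,1))$. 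Uniqueness of the distributional limit forces $w=x^{\alpha/2}\partial_x u|_{\Gamma_+}$, which belongs to $\mL^2(\Gamma_+)$, so $u\in\mV_\alpha(\Om)$.

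Finally, assembling the two modes of convergence gives $\|u_n-u\|_{\mV_\alpha(\Om)}^2=\|u_n-u\|_{\mH^1(\Om)}^2+\|x^{\alpha/2}\partial_x(u_n-u)\|_{\mL^2(\Gamma_+)}^2\to 0$, which is the desired completeness. I expect the sole delicate point to be the distributional identification of the limit $w$; everything else is bookkeeping.
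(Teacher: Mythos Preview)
Your proof is correct and follows essentially the same approach as the paper's: both establish completeness by passing to the limits in $\mH^1(\Om)$ and $\mL^2(\Gamma_+)$ separately, and then identify the boundary limit $w$ with $x^{\alpha/2}\partial_x u|_{\Gamma_+}$ via a distributional argument on $\Gamma_+$ (using continuity of the trace, of tangential differentiation, and of multiplication by the smooth function $x^{\alpha/2}$). The paper omits the verification that $(\cdot,\cdot)_{\mV_\alpha(\Om)}$ is an inner product, which you include, but otherwise the arguments coincide.
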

	\begin{proof}
	Let us consider a Cauchy sequence $(u_n)_{n \in \mathbb{N}}$ in $\mV_\alpha(\Om)$.
	Clearly, since $\mH^1(\Omega)$ and $\mL^2(\Gamma_+)$ are Hilbert spaces, there exist $u \in \mH^1(\Omega)$ such that
	$u_n \rightarrow u$ in $\mH^1(\Omega)$ and $w \in \mL^2(\Gamma_+)$ such that $x^{\alpha/2}\partial_x u_n|_{\Gamma_+} \rightarrow w$ in $\mL^2(\Gamma_+)$. Since the trace mapping is continuous from $\mH^1(\Omega)$ to $\mH^{1/2}(\Gamma_+)$, there holds $u_n|_{\Gamma_+} \rightarrow u|_{\Gamma_+}$ in $\mH^{1/2}(\Gamma_+)$, which implies that
	$u_n|_{\Gamma_+} \rightarrow u|_{\Gamma_+}$ in $\mathcal{D}'(\Gamma_+)$, the space of distributions on $\Gamma_+$. From this, we infer that $\partial_x u_n|_{\Gamma_+} \rightarrow \partial_x u|_{\Gamma_+}$ in $\mathcal{D}'(\Gamma_+)$. Using that $x^{\alpha/2} \in \mathscr{C}^\infty(\Gamma_+)$, we deduce $x^{\alpha/2}\partial_x u_n|_{\Gamma_+} \rightarrow  x^{\alpha/2}\partial_x u|_{\Gamma_+}$ in $\mathcal{D}'(\Gamma_+)$. We conclude that $w=x^{\alpha/2}\partial_x u|_{\Gamma_+}$, which completes the proof.
\end{proof}	
\begin{remark}
This result is also true for $\alpha<0$ but we will not consider this case in the following.
\end{remark}
It is readily seen that the forms $a(\cdot,\cdot)$ and $\ell(\cdot)$ are respectively continuous on $\mV_\alpha(\Om)\times\mV_\alpha(\Om)$ and $\mV_\alpha(\Om)$. With the help of the Riesz representation theorem, hence we can define the bounded operator $A: \mV_\alpha(\Om) \rightarrow \mV_\alpha(\Om)$ and the element $L \in \mV_\alpha(\Om)$ such that
\begin{equation}\label{DefOpA}
(Au,v)_{\mV_\alpha(\Om)}=a(u,v)\qquad\mbox{ and }\qquad (L,v)_{\mV_\alpha(\Om)}=\ell(v), \qquad \forall u,v \in \mV_\alpha(\Om).\end{equation}
With this notation, the weak formulation (\ref{faible}) is equivalent to find $u \in \mV_\alpha(\Om)$ such that $Au=L$.\\
\newline
To investigate the case of an impedance which is both vanishing at the origin and whose sign is non constant, we introduce the space, for $\alpha \geq 0$,
\[
\mW_\alpha(\Om):=\{v \in \mH^1(\Omega)\,|\,|x|^{\alpha/2} \partial_x v \in \mL^2(\Gamma_-),\,x^{\alpha/2} \partial_x v \in \mL^2(\Gamma_+)\},
\]	
and consider the weak formulation
\be\label{faible bis}
\begin{array}{|l}
\mbox{Find }u\in\mW_\alpha(\Om)\mbox{ such that for all }v \in \mW_\alpha(\Om)\\[4pt]
\int_\Omega \nabla u\cdot \nabla \overline{v}+u\overline{v}\,dxdy  -\int_{\Gamma_-}|x|^\alpha\, \partial_x u\, \partial_x \overline{v}\,dx+ \int_{\Gamma_+}x^\alpha\, \partial_x u\, \partial_x \overline{v}\,dx=\int_\Omega f\overline{v}\,dxdy. 
\end{array}	
\ee	
We denote by $b(\cdot,\cdot)$	 the sesquilinear form appearing in the left hand side of (\ref{faible bis}). By adapting the proof of Proposition \ref{hilbert}, one shows that $\mW_\alpha(\Om)$ is a Hilbert space for all $\alpha \geq 0$ when equipped with the inner product
\[(u,v)_{\mW_\alpha(\Om)}=\int_\Omega \nabla u\cdot \nabla \overline{v}+u\overline{v}\,dxdy + \int_{\Gamma_-}|x|^\alpha\, \partial_x u\, \partial_x \overline{v}\,dx + \int_{\Gamma_+}x^\alpha\, \partial_x u\, \partial_x \overline{v}\,dx.\]
With the Riesz representation theorem, we define the bounded operator $B: \mW_\alpha(\Om) \rightarrow \mW_\alpha(\Om)$ and the element $\tilde{L} \in \mW_\alpha(\Om)$ such that
\begin{equation}\label{DefOpB}
(Bu,v)_{\mW_\alpha(\Om)}=b(u,v)\qquad\mbox{ and }\qquad (\tilde{L},v)_{\mW_\alpha(\Om)}=\ell(v), \qquad \forall u,v \in \mW_\alpha(\Om).
\end{equation}
With this notation, the weak formulation (\ref{faible bis}) is equivalent to find $u \in \mW_\alpha(\Om)$ such that $Bu=\tilde{L}$.

\subsection{Statement of the results}

We start with the variational formulation (\ref{faible}). Concerning the relation with the strong problem (\ref{fort}), we have the following result:
\begin{theorem}\label{faible_fort}
For $s=\pm 1$, for any $\alpha \geq 0$, if $u$ satisfies (\ref{faible}), then it solves (\ref{fort}) where the GIBC on $\Gamma_+$ holds in $\mathcal{D}'(\Gamma_+)$ (distributional sense).
\end{theorem}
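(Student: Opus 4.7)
The plan is a classical three-step argument: first extract the interior PDE, then use a Green identity to recover a boundary relation, then specialize the test functions to isolate each piece of the boundary.

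First, I would test (\ref{faible}) against arbitrary $v\in\mathcal{D}(\Om)$. Such a $v$ lies in $\mV_\alpha(\Om)$ and its trace on $\Gamma_+$ vanishes, so the weighted boundary term drops out and one obtains $-\Delta u+u=f$ in $\mathcal{D}'(\Om)$. Since $u$ and $f$ belong to $\mL^2(\Om)$, this also yields $\Delta u\in\mL^2(\Om)$, which in turn makes the normal trace $\partial_\nu u\in\mH^{-1/2}(\partial\Om)$ well defined and gives the Green identity
\[
\int_\Om\nabla u\cdot\nabla\overline{v}+\Delta u\,\overline{v}\,dxdy=\langle\partial_\nu u,\overline{v}\rangle_{\mH^{-1/2}(\partial\Om),\mH^{1/2}(\partial\Om)},\qquad \forall v\in\mH^1(\Om).
\]

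Second, I would substitute this identity into (\ref{faible}) and use $-\Delta u+u=f$ to cancel the volume contribution. This leaves the clean boundary relation
\[
\langle\partial_\nu u,\overline{v}\rangle_{\mH^{-1/2}(\partial\Om),\mH^{1/2}(\partial\Om)}+s\int_{\Gamma_+}x^\alpha\,\partial_x u\,\partial_x\overline{v}\,dx=0,\qquad \forall v\in\mV_\alpha(\Om).
\]
To recover $\partial_\nu u=0$ on $\partial\Om\setminus\overline{\Gamma_+}$, I test with lifts $v\in\mathscr{C}^\infty(\overline{\Om})$ supported away from $\overline{\Gamma_+}$; these trivially belong to $\mV_\alpha(\Om)$ (their trace on $\Gamma_+$ is zero), and the resulting identity says the duality pairing of $\partial_\nu u$ against any trace supported in $\partial\Om\setminus\overline{\Gamma_+}$ vanishes.

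Third, to recover the GIBC on $\Gamma_+$, I pick any $\varphi\in\mathcal{D}(\Gamma_+)$, extend it by zero to $\partial\Om$ (the extension is smooth and compactly supported in $\partial\Om$ since $\varphi$ vanishes near the endpoints of $\Gamma_+$), and lift it to a smooth function $V\in\mH^1(\Om)$ with trace equal to this extension. Because $\partial_xV|_{\Gamma_+}=\partial_x\varphi$ is smooth and compactly supported in $(0,1)$, the weighted condition $x^{\alpha/2}\partial_xV\in\mL^2(\Gamma_+)$ holds for every $\alpha\ge 0$, so $V\in\mV_\alpha(\Om)$. Plugging $v=V$ into the boundary relation gives
\[
\langle\partial_\nu u,\varphi\rangle_{\mathcal{D}'(\Gamma_+),\mathcal{D}(\Gamma_+)}=-s\int_{\Gamma_+}x^\alpha\,\partial_x u\,\partial_x\overline{\varphi}\,dx=s\,\langle\partial_x(x^\alpha\partial_x u),\varphi\rangle_{\mathcal{D}'(\Gamma_+),\mathcal{D}(\Gamma_+)},
\]
which is exactly the third equation of (\ref{fort}) in $\mathcal{D}'(\Gamma_+)$.

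The only delicate point is verifying that the lift $V$ actually belongs to $\mV_\alpha(\Om)$; this is easy here thanks to the compact support of $\varphi\in\mathcal{D}(\Gamma_+)$, which keeps $\partial_x V$ away from the singular point $x=0$. A minor check is that $x^\alpha\partial_x u\in\mL^2(\Gamma_+)$ (which follows directly from $u\in\mV_\alpha(\Om)$ and $\alpha\ge 0$), so that its distributional derivative $\partial_x(x^\alpha\partial_x u)$ is a well defined element of $\mathcal{D}'(\Gamma_+)$.
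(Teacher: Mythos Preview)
Your proof is correct and follows essentially the same approach as the paper's: interior PDE from $v\in\mathscr{C}^\infty_0(\Om)$, Green's formula (\ref{IPP}) to isolate the boundary contribution, then lifts of test functions supported on each boundary piece to recover first $\partial_\nu u=0$ on $\partial\Om\setminus\overline{\Gamma_+}$ and then the GIBC on $\Gamma_+$ in $\mathcal{D}'(\Gamma_+)$. The only cosmetic difference is the order in which you derive the boundary relation and the Neumann condition on the complement; the paper extracts the latter first (referring back to the argument around (\ref{EqnVol})) and then states the combined boundary identity (\ref{DtN1D2D}).
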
	

Next we consider the well-posedness of (\ref{faible}). In the good sign case $s=1$, we have the following theorem, which is a direct application of the Lax-Milgram lemma since the bilinear form $a(\cdot,\cdot)$ is coercive (the proof is omitted because it is straightforward).
\begin{theorem}\label{good_sign}
For $s=1$, the weak formulation (\ref{faible}) has a unique solution for all $\alpha \geq 0$. 
\end{theorem}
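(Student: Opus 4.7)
The plan is to apply the Lax–Milgram lemma in the Hilbert space $\mV_\alpha(\Om)$ (whose Hilbert structure is already provided by Proposition \ref{hilbert}), so the only real work is to verify coercivity of $a(\cdot,\cdot)$ together with the continuity statements already noted in the text.

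First I would record the continuity estimates that were mentioned just above the statement. Continuity of $a(\cdot,\cdot)$ on $\mV_\alpha(\Om)\times\mV_\alpha(\Om)$ follows from Cauchy–Schwarz applied separately to the bulk term $\int_\Om (\nabla u\cdot\nabla\overline v + u\overline v)$ and the boundary term $\int_{\Gamma_+} x^\alpha\,\partial_x u\,\partial_x\overline v\,dx = \int_{\Gamma_+} (x^{\alpha/2}\partial_x u)(x^{\alpha/2}\partial_x\overline v)\,dx$, each of which is bounded by a product of $\mV_\alpha(\Om)$-norms. Continuity of $\ell$ follows from $|\ell(v)|\le\|f\|_{\mL^2(\Om)}\|v\|_{\mL^2(\Om)}\le\|f\|_{\mL^2(\Om)}\|v\|_{\mV_\alpha(\Om)}$.

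The main (and in fact trivial) point is coercivity when $s=1$. Taking $v=u$ in the bilinear form gives
\begin{equation*}
a(u,u)=\int_\Om(|\nabla u|^2+|u|^2)\,dxdy + \int_{\Gamma_+} x^\alpha|\partial_x u|^2\,dx = \|u\|_{\mV_\alpha(\Om)}^2,
\end{equation*}
so $a$ is coercive on $\mV_\alpha(\Om)$ with coercivity constant $1$, uniformly in $\alpha\ge 0$. Note that this step uses in an essential way that the sign $s$ in front of the boundary integral is positive; had we $s=-1$, this direct argument would fail and one would need the more delicate Fredholm analysis developed later in the paper.

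Then the Lax–Milgram lemma applied to the continuous, coercive sesquilinear form $a$ and the continuous antilinear form $\ell$ on $\mV_\alpha(\Om)$ yields existence and uniqueness of $u\in\mV_\alpha(\Om)$ with $a(u,v)=\ell(v)$ for every $v\in\mV_\alpha(\Om)$, i.e.\ a unique solution of (\ref{faible}). There is no genuine obstacle in this proof; in particular no compactness or regularity argument is needed, which is why the authors announce the proof as straightforward and omit it.
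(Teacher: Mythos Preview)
Your proof is correct and follows exactly the approach the paper indicates: the authors state that the result is a direct application of the Lax--Milgram lemma using the coercivity of $a(\cdot,\cdot)$, and your observation that $a(u,u)=\|u\|_{\mV_\alpha(\Om)}^2$ when $s=1$ is precisely the point.
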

The bad sign case $s=-1$ is more delicate to address because then the coercivity of $a(\cdot,\cdot)$ in $\mV_\alpha(\Om)\times\mV_\alpha(\Om)$ is not clear. The main achievement of the present paper is to emphasize that the well-posedness (in the Fredolm sense) of (\ref{faible})
depends on the parameter $\alpha$.
We prove a positive result for $\alpha \in [0,1)$ and a negative result for $\alpha=1$. 
\begin{theorem}\label{casgeneral}
For $s=-1$ and $\alpha \in [0,1)$, the operator $A: \mV_\alpha(\Om) \rightarrow \mV_\alpha(\Om)$ is Fredholm of index zero. As a consequence, Problem (\ref{faible}) admits a solution when $A$ is injective.
\end{theorem}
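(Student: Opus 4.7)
The plan is to apply the Fredholm alternative. Using the inner product on $\mV_\alpha(\Om)$,
\[(u,v)_{\mV_\alpha(\Om)} = \int_\Om(\nabla u\cdot\nabla\overline{v} + u\overline{v})\,dxdy + \int_{\Gamma_+}x^\alpha\partial_xu\,\partial_x\overline{v}\,dx,\]
the sesquilinear form $a$ with $s=-1$ rewrites as $a(u,v)=(u,v)_{\mV_\alpha(\Om)}-2\,c(u,v)$, with $c(u,v):=\int_{\Gamma_+}x^\alpha\partial_xu\,\partial_x\overline{v}\,dx$. By Riesz, this gives $A=I-2C$ for a bounded self-adjoint operator $C:\mV_\alpha(\Om)\to\mV_\alpha(\Om)$, and Fredholmness of index zero will follow from the compactness of $C$.

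The proof of this compactness is the crux of the argument, and is \emph{not} obtained through a naive weak continuity of $c$ on $\mV_\alpha(\Om)\times\mV_\alpha(\Om)$. Indeed, sequences of the form $u_n(x,y)=\phi(y)\sin(nx)/n$ (with $\phi$ a smooth cutoff equal to $1$ near $y=0$) remain bounded in $\mV_\alpha(\Om)$ while their weighted derivative traces only converge weakly in $\mL^2(\Gamma_+)$. To extract enough information I would follow the strategy announced in the abstract: reduce to a 1D problem on $\Gamma_+$. Introducing the Dirichlet-to-Neumann operator $\Lambda:\mH^{1/2}(\Gamma_+)\to\mH^{-1/2}(\Gamma_+)$ associated with $-\Delta+1$ in $\Om$ with homogeneous Neumann data on $\Gamma_0$, and splitting $u=u[u|_{\Gamma_+}]+u_f^0$ via this lift, Problem (\ref{faible}) becomes equivalent to a 1D variational problem for $w=u|_{\Gamma_+}$ in the trace space
\[Y_\alpha:=\{w\in\mH^{1/2}(\Gamma_+)\,:\,x^{\alpha/2}w'\in\mL^2(\Gamma_+)\},\]
of the form $\langle\Lambda w,\phi\rangle - \int_{\Gamma_+}x^\alpha w'\overline{\phi'}\,dx = F(\phi)$ for every $\phi\in Y_\alpha$, where $F$ gathers the source term.

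The key input is then the 1D compact embedding result to be established in Section \ref{Section1D}, valid precisely because $\alpha\in[0,1)$. The underlying mechanism is the improved integrability of $w'$: from $|w'|=x^{-\alpha/2}\cdot(x^{\alpha/2}|w'|)$ and $x^{-\alpha/2}\in\mL^p(0,1)$ for every $p<2/\alpha$, H\"older's inequality gives $w'\in\mL^q(\Gamma_+)$ for some $q>1$, that is $w\in W^{1,q}(\Gamma_+)$. Combined with the one-dimensional Sobolev and Rellich-Kondrachov embeddings, this gain in regularity provides the compactness needed to exhibit the 1D operator as Fredholm of index zero on $Y_\alpha$, a property which then transfers back to $A:\mV_\alpha(\Om)\to\mV_\alpha(\Om)$ through the lifting $w\mapsto u[w]$. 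The threshold $\alpha<1$ is sharp, since $\int_0^1 x^{-1}\,dx$ diverges at $\alpha=1$, in agreement with the non-Fredholm behaviour proved in Section \ref{SectionWithSingus}.

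The consequence is immediate from Fredholmness of index zero: the operator $A$ is injective if and only if bijective, so $Au=L$ admits a unique solution in $\mV_\alpha(\Om)$ for every datum $f\in\mL^2(\Om)$ whenever $A$ is injective. The main obstacle in this roadmap is really the 1D compact embedding of Section \ref{Section1D}, where the behaviour at $x=0$, the point where the weight $x^\alpha$ vanishes, has to be handled carefully in order to exploit the sub-critical range $\alpha<1$.
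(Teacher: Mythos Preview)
Your proposal is correct and follows the same global strategy as the paper: reduce the 2D variational problem to a 1D problem on $\Gamma_+$ via the Dirichlet-to-Neumann map, and then show that the resulting 1D operator on the trace space is Fredholm of index zero because the DtN contribution is a compact perturbation of the weighted-derivative part. Your initial attempt $A=I-2C$ with $C$ compact on $\mV_\alpha(\Om)$ is correctly discarded (the counterexample is valid), and you then land exactly on the paper's route through Proposition~\ref{equivalence}.

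The genuine difference lies in how the key compact embedding $\mX_\alpha(I)\hookrightarrow\mH^{1/2}(I)$ is obtained. The paper (Propositions~\ref{prop-Holder} and~\ref{prop-compacite}) proves directly that $\mX_\alpha(I)\subset\mathscr{C}^{0,(1-\alpha)/2}(\overline{I})$ and then runs a cutoff argument near the origin together with a 2D lifting trick to upgrade this to compactness in $\mH^{1/2}(I)$. Your route is to factor through $W^{1,q}(I)$ with $1<q<2/(1+\alpha)$ via H\"older's inequality, and then invoke the compact embedding $W^{1,q}(I)\hookrightarrow\mH^{1/2}(I)$ for $q>1$. This is valid and arguably more streamlined; it avoids the cutoff and 2D lifting entirely. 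Two small points, though: first, you should make explicit that the compactness needed is precisely $Y_\alpha\hookrightarrow\mH^{1/2}(\Gamma_+)$, so that the DtN term becomes a compact perturbation and the 1D operator takes the form $(\pm\mathrm{Id})+\mathrm{compact}$; second, the embedding $W^{1,q}(I)\hookrightarrow\mH^{1/2}(I)$ for $1<q<2$ is true but not what is usually called ``Rellich--Kondrachov'' and deserves a line of justification (e.g.\ via Hausdorff--Young or fractional Sobolev embeddings), since the naive Morrey bound $W^{1,q}\subset\mathscr{C}^{0,1-1/q}$ gives a H\"older exponent below $1/2$ when $q<2$.
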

\begin{theorem}\label{cas1}
For $s=-1$ and $\alpha=1$, the operator $A: \mV_\alpha(\Om) \rightarrow \mV_\alpha(\Om)$ is not of Fredholm type.
\end{theorem}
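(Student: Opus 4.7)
I would produce a Weyl singular sequence for $A$ at $0$, that is, $(u_n)\subset\mV_1(\Om)$ with $\|u_n\|_{\mV_1(\Om)}=1$, $u_n\rightharpoonup 0$ weakly and $\|Au_n\|_{\mV_1(\Om)}\to 0$. Since the Fredholm property of a bounded operator rules out such sequences, this yields the claim.

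\textbf{Step 1: singular exponents at the origin.} In local polar coordinates $(r,\theta)$, $\theta\in(0,\pi)$, I would search for $u(r,\theta)=r^\lambda\phi(\theta)$ satisfying the frozen homogeneous problem $-\Delta u=0$ together with $\partial_\nu u+\partial_x(x\partial_x u)=0$ on $\theta=0$ and $\partial_\nu u=0$ on $\theta=\pi$. The two sides of the GIBC share the same homogeneity $r^{\lambda-1}$ precisely when $\alpha=1$, which explains why pure power singularities exist only for that value and not for $\alpha\in[0,1)$. A direct computation gives $\phi''+\lambda^2\phi=0$ on $(0,\pi)$, $\phi'(0)=\lambda^2\phi(0)$, $\phi'(\pi)=0$, and non-trivial solutions exist iff $\tan(\lambda\pi)=\lambda$. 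This characteristic equation has an infinite sequence of positive real roots; I fix any such $\lambda_\star>0$ and let $\phi_\star$ be its angular profile.

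\textbf{Step 2: the Weyl sequence.} Let $\chi\in C^\infty_c((0,+\infty))$ be non-trivial with support in $[1,2]$, and set
\[
u_n(r,\theta) := \gamma_n^{-1}\,\chi(nr)\,r^{\lambda_\star}\phi_\star(\theta),
\]
with $\gamma_n>0$ chosen so that $\|u_n\|_{\mV_1(\Om)}=1$. A standard scaling computation yields $\gamma_n\simeq n^{-\lambda_\star}$. Since the support of $u_n$ is contained in the shrinking annulus $\{n^{-1}\le r\le 2n^{-1}\}$, one checks that $u_n\rightharpoonup 0$ weakly by testing the $\mV_1$ inner product against a dense family of smooth functions vanishing near the origin. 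No subsequence converges strongly, the weak limit being $0$ while $\|u_n\|_{\mV_1}=1$.

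\textbf{Step 3: the error estimate.} Using $\|Au_n\|_{\mV_1}=\sup_{\|v\|_{\mV_1}\le 1}|a(u_n,v)|$ and applying Green's formula (legitimate since $u_n$ is smooth away from the origin and, for $n$ large, vanishes on $\partial\Om\setminus\overline{\Gamma_+}$ as well as near both endpoints of $\Gamma_+$),
\[
a(u_n,v) = \int_{\Om}(-\Delta u_n + u_n)\,\bar{v}\,dxdy \,-\, \int_{\Gamma_+}x\,\partial_x u_n\,\partial_x\bar{v}\,dx \,+\, \int_{\Gamma_+}\partial_\nu u_n\,\bar{v}\,dx.
\]
Because $r^{\lambda_\star}\phi_\star$ is harmonic and the identity $\phi_\star'(0)=\lambda_\star^2\phi_\star(0)$ realises the GIBC exactly, every contribution in which no derivative falls on $\chi(nr)$ cancels pointwise. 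The residual commutator terms are all supported in the annulus of width $n^{-1}$, and a careful scaling/Cauchy--Schwarz analysis, combined with the continuity $\mV_1(\Om)\hookrightarrow\mH^{1/2}(\Gamma_+)$ and the bound $\|x^{1/2}\partial_x v\|_{\mL^2(\Gamma_+)}\le\|v\|_{\mV_1(\Om)}$, shows that they decay uniformly to zero as $n\to\infty$.

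\textbf{Main obstacle.} The delicate point is the control of the boundary contribution on $\Gamma_+$. If one completes the integration by parts on $\Gamma_+$ to $\int_{\Gamma_+}(\partial_\nu u_n+\partial_x(x\partial_x u_n))\,\bar v\,dx$, the residual $\partial_\nu u_n+\partial_x(x\partial_x u_n)$ has pointwise size of order $n$ on an interval of length $n^{-1}$, so its $\mL^2(\Gamma_+)$ norm grows as $n^{1/2}$ and does not decay. The correct strategy is to keep that integration by parts only half-done, as displayed above, transferring one derivative onto $v$; then the term $\int_{\Gamma_+}x\,\partial_x u_n\,\partial_x\bar v\,dx$ is directly controlled by $\|x^{1/2}\partial_x u_n\|_{\mL^2(\Gamma_+)}\,\|x^{1/2}\partial_x v\|_{\mL^2(\Gamma_+)}$, and the weight $x^{1/2}$ in the definition of $\mV_1(\Om)$ is exactly calibrated to the homogeneity of the singularity $r^{\lambda_\star}$. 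This matching, which occurs precisely at $\alpha=1$, is what makes the Weyl sequence work and simultaneously explains why no analogous construction could be carried out for $\alpha\in[0,1)$.
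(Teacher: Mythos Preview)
Your construction does not produce a Weyl sequence, and the error is the choice of singular exponent. You fix a \emph{real} root $\lambda_\star>0$ of $\tan(\lambda\pi)=\lambda$, but the obstruction to Fredholmness comes from the \emph{purely imaginary} roots $\lambda=i\tau$, $\tau\ne 0$, i.e.\ the solutions of $\tanh(\tau\pi)=\tau$. To see that your sequence fails, just test the form on itself: with $u_n=\gamma_n^{-1}\chi(nr)r^{\lambda_\star}\phi_\star$ one has
\[
a(u_n,u_n)=\|u_n\|_{\mH^1(\Om)}^2-\|x^{1/2}\partial_x u_n\|_{\mL^2(\Gamma_+)}^2,
\]
and a direct rescaling shows that both terms are exactly of order $\gamma_n^{-2}n^{-2\lambda_\star}$, with limiting ratio governed by $C_1=\int_0^\pi|\phi_\star|^2$, $C_2=\int_0^\pi|\phi_\star'|^2$ and $\phi_\star(0)^2$. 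Integrating $\phi_\star''+\lambda_\star^2\phi_\star=0$ against $\phi_\star$ and using the boundary conditions gives $C_2=\lambda_\star^2(C_1-\phi_\star(0)^2)$; since $\lambda_\star^2>0$ this forces $C_1>\phi_\star(0)^2$, and one checks that the leading coefficient of $a(u_n,u_n)$ is strictly positive. Hence $a(u_n,u_n)/\|u_n\|_{\mV_1(\Om)}^2$ tends to a nonzero constant and $\|Au_n\|_{\mV_1(\Om)}\not\to 0$. In short, real exponents $\lambda_\star>0$ give perfectly regular profiles $r^{\lambda_\star}\phi_\star$, and concentrating them near the origin cannot detect any defect of $A$.

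The paper works with $\lambda=i\tau$, for which the same identity reads $C_2=-\tau^2(C_1-\phi(0)^2)$ and produces the exact cancellation you are looking for. But there is a second point: even with the correct imaginary exponent, the \emph{hard} cutoff $\chi(nr)$ would re-introduce non-cancelling commutator terms of order $\int(\chi')^2 s\,ds>0$ in $a(u_n,u_n)$, so your ``half-done'' integration by parts does not save the argument. The paper avoids this by using instead the \emph{soft} regularisation $\mathfrak{s}_n=r^{i\tau+1/n}\cos(i\tau(\theta-\pi))$ (no cutoff), for which $\|\mathfrak{s}_n\|_{\mV_1(\Om)}^2\sim n\to\infty$ while, after integrating by parts against $v\in\mathscr{C}^\infty_0(\overline{\Om}\setminus\{O\})$ and exploiting that $r^{i\tau}\phi$ solves the homogeneous problem exactly, one obtains $\|\tilde A\mathfrak{s}_n\|_{\mV_1(\Om)}\le c$. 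The contradiction then rules out Fredholmness.
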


\noindent Now we present the results for the variational formulation (\ref{faible bis}) with an impedance which is both vanishing and sign-changing. They are very similar to the ones of Theorems \ref{faible_fort}, \ref{casgeneral} and \ref{cas1}.  
\begin{theorem}\label{faible_fort_bis}
For any $\alpha \geq 0$, if $u$ satisfies (\ref{faible bis}), then it solves (\ref{fort_bis}) where the GIBC on $\Gamma_\pm$ holds in $\mathcal{D}'(\Gamma_\pm)$.
\end{theorem}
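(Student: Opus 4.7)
The plan is to mimic the standard derivation of a strong formulation from a weak one, exploiting that $\Gamma_\pm$ are open segments that exclude the singular point $x=0$, so the issue of how test functions behave near the origin never appears.

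First I would plug arbitrary $v\in \mathscr{C}^\infty_c(\Om)$ into (\ref{faible bis}). Such $v$ trivially belongs to $\mW_\alpha(\Om)$ and vanishes near $\partial\Om$, so the two boundary integrals drop out and we obtain the distributional identity $-\Delta u + u = f$ in $\Om$. Since $f\in \mL^2(\Om)$ and $u\in \mH^1(\Om)$, this promotes $\Delta u$ to an $\mL^2(\Om)$ function, which is the regularity needed to give $\partial_\nu u$ a meaning in $\mH^{-1/2}(\partial\Om)$ through the usual Green formula.

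Next I would pick $v\in \mW_\alpha(\Om)$ arbitrary, apply Green's formula to $\int_\Om \nabla u\cdot\nabla\overline{v}\,dxdy$ using the identity $-\Delta u+u=f$, and subtract the resulting equality from (\ref{faible bis}). The volume terms cancel and only a boundary duality remains, yielding
\[
\langle \partial_\nu u,v\rangle_{\partial\Om} \;=\; -\int_{\Gamma_-}|x|^\alpha\partial_x u\,\partial_x\overline{v}\,dx \;+\;\int_{\Gamma_+}x^\alpha\partial_x u\,\partial_x\overline{v}\,dx
\]
for every $v\in \mW_\alpha(\Om)$. To recover the three boundary conditions separately I would then restrict the class of admissible $v$:
\begin{itemize}
\item Choosing $v$ as the lifting of any $\varphi\in \mathscr{C}^\infty_c(\Gamma_0)$ kills the right-hand side and yields $\partial_\nu u=0$ on $\Gamma_0$ in $\mathcal{D}'(\Gamma_0)$.
\item Choosing $v$ as the lifting of any $\varphi\in \mathscr{C}^\infty_c(\Gamma_+)$ (whose support is automatically bounded away from the origin, so $x^{\alpha/2}\partial_x v\in \mL^2(\Gamma_+)$ trivially and $v\in \mW_\alpha(\Om)$) gives $\langle \partial_\nu u,\varphi\rangle_{\Gamma_+} = \int_{\Gamma_+}x^\alpha \partial_x u\,\partial_x\overline{\varphi}\,dx$, which is exactly the distributional identity $\partial_\nu u = \partial_x(x^\alpha\partial_x u)$ on $\Gamma_+$.
\item The symmetric choice $\varphi\in \mathscr{C}^\infty_c(\Gamma_-)$ gives $\partial_\nu u = -\partial_x(|x|^\alpha\partial_x u)$ on $\Gamma_-$.
\end{itemize}

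The proof is essentially a carbon copy of the argument used for Theorem \ref{faible_fort}, with one extra boundary integral to carry along. The only point deserving care is the existence of the liftings: for $\varphi\in \mathscr{C}^\infty_c(\Gamma_\pm)$, any standard continuous extension operator $\mH^{1/2}(\Gamma)\to \mH^1(\Om)$ produces a function in $\mH^1(\Om)$ whose trace on $\Gamma$ is supported in $\Gamma_\pm\setminus\{0\}$, and since this trace is smooth and compactly supported away from the origin, its $\partial_x$ multiplied by $|x|^{\alpha/2}$ lies in $\mL^2(\Gamma_\pm)$, so the extension automatically belongs to $\mW_\alpha(\Om)$. I do not anticipate any substantive obstacle; the statement is really a bookkeeping consequence of the definition of $\mW_\alpha(\Om)$ and the fact that the GIBC is only claimed on the open segments $\Gamma_\pm$, so the delicate behaviour at $x=0$ plays no role here and is deferred to the well-posedness analysis.
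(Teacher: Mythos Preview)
Your proposal is correct and follows essentially the same route as the paper, which simply states that Theorem~\ref{faible_fort_bis} is established exactly as Theorem~\ref{faible_fort} and omits the details. Your extra care about the liftings belonging to $\mW_\alpha(\Om)$ is appropriate and matches the paper's treatment in the proof of Theorem~\ref{faible_fort}.
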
	
\begin{theorem}\label{casgeneral_bis}
For $\alpha \in [0,1)$, the operator $B: \mW_\alpha(\Om) \rightarrow \mW_\alpha(\Om)$ is Fredholm of index zero. As a consequence, Problem (\ref{faible bis}) admits a solution when $B$ is injective.
\end{theorem}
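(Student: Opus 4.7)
The strategy is to mirror the argument used for Theorem \ref{casgeneral}. The key observation is that the sesquilinear form $b(\cdot,\cdot)$ and the inner product of $\mW_\alpha(\Om)$ coincide on the volume part and on the integral over $\Gamma_+$, and differ only on $\Gamma_-$, where they have opposite signs. Introducing via the Riesz representation theorem the bounded operator $T:\mW_\alpha(\Om)\to\mW_\alpha(\Om)$ defined by
\[
(Tu,v)_{\mW_\alpha(\Om)}=-2\int_{\Gamma_-}|x|^\alpha\,\partial_x u\,\partial_x\overline{v}\,dx,\qquad\forall u,v\in\mW_\alpha(\Om),
\]
one checks directly from (\ref{DefOpB}) that $B=I+T$. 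By the classical Riesz--Schauder theory of compact perturbations of the identity, it then suffices to prove that $T$ is compact in order to conclude that $B$ is Fredholm of index zero; the solvability assertion when $B$ is injective then follows from the Fredholm alternative.

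By Cauchy--Schwarz applied to $(Tu,v)_{\mW_\alpha(\Om)}$, one obtains
\[
\|Tu\|_{\mW_\alpha(\Om)}\leq 2\,\bigl\||x|^{\alpha/2}\partial_x u\bigr\|_{\mL^2(\Gamma_-)},
\]
so the compactness of $T$ reduces to that of the linear map
\[
\Psi_-:\mW_\alpha(\Om)\longrightarrow\mL^2(\Gamma_-),\qquad v\longmapsto|x|^{\alpha/2}\,\partial_x v\bigl|_{\Gamma_-}.
\]
This is the exact analogue on $\Gamma_-$ of the compactness statement established on $\Gamma_+$ in the proof of Theorem \ref{casgeneral}. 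The plan to obtain it is unchanged: after localizing by a smooth cut-off supported in a neighbourhood of $\Gamma_-$ disjoint from $\Gamma_+$, one performs a one-dimensional reduction in the tangential variable $x$, which reformulates the question as the compactness of an embedding between the weighted Sobolev spaces on $(-1,0)$ studied in Section \ref{Section1D}.

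The main obstacle is precisely this final 1D compact embedding, and this is where the restriction $\alpha\in[0,1)$ intervenes. For $\alpha<1$ the weight $|x|^\alpha$ remains integrable near the origin, and the 1D compact embedding results of Section \ref{Section1D} give exactly the needed compactness; at $\alpha=1$ this compactness fails, in agreement with the obstruction identified in Theorem \ref{cas1}. Beyond this step, nothing genuinely new has to be proved: the additional positive term on $\Gamma_+$ present in the norm of $\mW_\alpha(\Om)$ only reinforces the control available on any bounded sequence and never obstructs the estimates, so the argument designed for the operator $A$ transposes \emph{mutatis mutandis} to $B$.
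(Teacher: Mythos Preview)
Your decomposition $B=I+T$ is correct, but the central claim that $T$ is compact is \emph{false}, and this invalidates the whole approach. The boundary integral over $\Gamma_-$ is part of the \emph{principal part} of the operator (cf.\ the opening paragraph of Section~\ref{SectionVanishing}), not a compact perturbation. Concretely, take a cutoff $\chi\in\mathscr{C}^\infty_0(-3/4,-1/4)$ and set $\varphi_n(x)=n^{-1}\sin(nx)\,\chi(x)$; extend to $\Om$ by $u_n(x,y)=\varphi_n(x)\psi(y)$ with $\psi$ smooth, $\psi(0)=1$. Then $(u_n)$ is bounded in $\mW_\alpha(\Om)$ and converges weakly to $0$, yet $|x|^{\alpha/2}\partial_x u_n|_{\Gamma_-}\approx |x|^{\alpha/2}\cos(nx)\,\chi(x)$ has no strongly convergent subsequence in $\mL^2(\Gamma_-)$ (its $\mL^2$ norm stays bounded away from zero while it tends weakly to $0$). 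Hence $\Psi_-$ is not compact and neither is $T$.

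You also mischaracterize what is proved in Theorem~\ref{casgeneral}: there the compactness used is that of the embedding $\mX_\alpha(I)\hookrightarrow\mH^{1/2}(I)$ (Proposition~\ref{prop-compacite}), which concerns $\varphi$ itself, never the weighted derivative $x^{\alpha/2}d_x\varphi$. Nothing in Section~\ref{Section1D} yields compactness of $\varphi\mapsto |x|^{\alpha/2}d_x\varphi$ from $\mX_\alpha(I_-)$ to $\mL^2(I_-)$---indeed, for $\alpha=0$ this is the map $\varphi\mapsto d_x\varphi$ from $\mH^1$ to $\mL^2$, which is obviously non-compact.

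The paper's proof proceeds quite differently. It first reduces (\ref{faible bis}) to a 1D problem on $\mathcal{X}_\alpha(\mathbb{I})$ via a Dirichlet-to-Neumann map (Proposition~\ref{equivalence_bis}), then handles the sign change by a \emph{T-coercivity} argument: the isomorphism $\mathbb{T}\Phi=(-\phi_-+2\phi_+(0),\phi_+)$ flips the sign on $I_-$ so that the weighted derivative terms become positive, while the correction $+2\phi_+(0)$ (which uses the continuity at $0$ given by Proposition~\ref{prop-Holder}, hence the restriction $\alpha<1$) keeps $\mathbb{T}\Phi$ in $\mathcal{X}_\alpha(\mathbb{I})$. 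The principal part $\mathsf{S}$ then satisfies $\mathsf{S}\circ\mathbb{T}=\mrm{Id}$, and only the lower-order DtN and $\mL^2$ contributions are compact, via Proposition~\ref{prop-compacite}. In short: a sign-changing principal symbol cannot be absorbed into a compact remainder; one must invert it explicitly.
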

\begin{theorem}\label{cas1_bis}
For $\alpha=1$, the operator $B: \mW_\alpha(\Om) \rightarrow \mW_\alpha(\Om)$ is not of Fredholm type.
\end{theorem}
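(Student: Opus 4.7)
The plan is to establish non-Fredholmness by constructing a singular Weyl sequence: a sequence $(u_n)\subset\mW_1(\Om)$ with $\|u_n\|_{\mW_1(\Om)}=1$, $u_n\rightharpoonup 0$ weakly, and $Bu_n\to 0$ strongly in $\mW_1(\Om)$. The existence of such a sequence is incompatible with the Fredholm property: a Fredholm operator is bounded from below modulo its (finite-dimensional) kernel, which would force a strongly convergent subsequence of $(u_n)$ and contradict $\|u_n\|=1$ with $u_n\rightharpoonup 0$.

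My strategy is a direct reduction to Theorem \ref{cas1} through the reflection $\sigma:(x,y)\mapsto(-x,y)$. The bad-sign contribution $-\int_{\Gamma_-}|x|\,\partial_x u\,\partial_x\overline v\,dx$ in $b(u,v)$ becomes, after this reflection, precisely the bad-sign contribution $-\int_{\Gamma_+}x\,\partial_x u\,\partial_x\overline v\,dx$ that drives the failure of Fredholmness for $A$ at $\alpha=1$. Since $\Gamma$ is flat, a small half-disc $D_\delta^+:=\{x^2+y^2<\delta^2,\ y>0\}$ is contained in $\Om$ for some $\delta>0$. I would take the Weyl sequence $(w_n)$ produced in the proof of Theorem \ref{cas1}, arranged (by a cutoff if needed) to be supported in $D_\delta^+\cap\{x>0\}$, and define $u_n:=w_n\circ\sigma$, extended by zero to $\Om$. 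Then $u_n$ is supported in $D_\delta^+\cap\{x<0\}$, its trace on $\Gamma_+$ vanishes, and a straightforward change of variables yields $b(u_n,v)=a(w_n,\chi(v\circ\sigma))$ for any cutoff $\chi\in\mathscr{C}^\infty_c(D_\delta^+)$ equal to $1$ on $\supp w_n$. Since $v\mapsto\chi(v\circ\sigma)$ is bounded from $\mW_1(\Om)$ into $\mV_1(\Om)$, one gets $\|Bu_n\|_{\mW_1(\Om)}\le C\|Aw_n\|_{\mV_1(\Om)}\to 0$, while $\|u_n\|_{\mW_1(\Om)}=\|w_n\|_{\mV_1(\Om)}=1$ and $u_n\rightharpoonup 0$ follow from the isometric nature of $\sigma$ on the supports involved.

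The genuine obstacle therefore sits inside Theorem \ref{cas1}: producing the singular Weyl sequence for $A$ at the critical exponent $\alpha=1$. I expect this step to rely on the fact that the tangential operator $-\partial_x(x\partial_x\,\cdot)$ admits, at the origin, a continuous family of imaginary-power singularities $x^{i\lambda}$, $\lambda\in\R$, whose weighted energies $\int_0^\delta x\,|\partial_x(x^{i\lambda}\chi(x))|^2\,dx$ lie exactly at the logarithmic threshold of integrability. By concentrating such modes around a sequence of scales $\eps_n\to 0$ and combining them with an appropriate profile in the normal variable $y$, one obtains approximate eigenfunctions for the Laplace--Ventcel system at the eigenvalue $0$, bounded in $\mV_1(\Om)$ but carrying no relatively compact subsequence. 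This is precisely the phenomenon that is absent for $\alpha\in[0,1)$, where the weighted embeddings established in Section \ref{Section1D} deliver the compactness needed for Theorem \ref{casgeneral_bis}; the critical value $\alpha=1$ is where that compactness is lost. Once $(w_n)$ is available, the reflection reduction above turns Theorem \ref{cas1_bis} into an almost immediate corollary of Theorem \ref{cas1}.
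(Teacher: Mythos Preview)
Your reduction to Theorem \ref{cas1} via the reflection $\sigma$ has a genuine gap: the Weyl sequence built there cannot be ``arranged (by a cutoff if needed) to be supported in $D_\delta^+\cap\{x>0\}$'' while keeping the Weyl property. That sequence is $\mathfrak{s}_n(r,\theta)=r^{i\tau+1/n}\cos(i\tau(\theta-\pi))$, which occupies the full sector $\theta\in(0,\pi)$ and is designed so that the Neumann condition holds at $\theta=\pi$; in particular its trace on $\Gamma_-$ equals $r^{i\tau+1/n}\ne 0$. Multiplying by an angular cutoff $\eta(\theta)$ supported in $[0,\pi/2]$ adds to the Laplacian the terms $2r^{-2}\eta'\,\partial_\theta\mathfrak{s}_n+r^{-2}\eta''\,\mathfrak{s}_n$, of size $r^{1/n-2}$ but carrying \emph{no} $O(1/n)$ prefactor (unlike $\Delta\mathfrak{s}_n$ itself). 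The same integration-by-parts estimate used in the paper then only gives $|(\tilde{A}(\eta\mathfrak{s}_n),v)|\le C\sqrt{n}\,\|v\|_{\mV_1(\Om)}$ for these extra contributions, which is the same order as $\|\eta\mathfrak{s}_n\|_{\mV_1(\Om)}$; the required smallness $\|\tilde{A}(\eta\mathfrak{s}_n)\|/\|\eta\mathfrak{s}_n\|\to 0$ is lost. A cutoff depending on $x$ instead must vanish at the origin and would kill the blow-up of the $\Gamma_+$ energy. More conceptually, a nontrivial function supported in $\{x>0\}$ cannot be approximately harmonic across the interior ray $\{x=0,\,y>0\}$, so no Weyl sequence for $A$ with that support can exist.

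The paper does not reduce to Theorem \ref{cas1}; it computes the singularities of the $B$ problem directly. With the GIBC imposed at both $\theta=0$ and $\theta=\pi$, the angular spectral problem (\ref{system_bis}) gives the dispersion relation $(1+\lambda^2)\sin(\lambda\pi)=0$, hence the purely imaginary exponents $\lambda=\pm i$ (different from the $\pm i\tau$ of the $A$ problem, where $\tanh(\tau\pi)=\tau$). One then reruns the contradiction argument of Theorem \ref{cas1} with $\tau$ replaced by $1$ and the angular profile $\cos(i\tau(\theta-\pi))$ replaced by $\cos(i\theta)-i\sin(i\theta)$, together with the density of $\mathscr{C}^\infty_0(\overline{\Om}\setminus\{O\})$ in $\mW_1(\Om)$. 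The singular profiles for $A$ and $B$ are simply not images of one another under $\sigma$, which is why your shortcut cannot succeed.
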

\begin{remark}
Comparing all these results, we can say that the well-posedness of Problem (\ref{faible bis}) is more affected by the fact that the impedance is vanishing than by the fact that its sign is non constant. 
\end{remark}
Among the important results of this paper, let us also mention Theorem \ref{faible_fort_ter}, in which we establish the equivalence between the weak and the  strong formulation of a slightly more realistic problem than those presented in this section.  

In order to prove some of the theorems above, we need to establish results for weighted Sobolev spaces in 1D. This is the subject of the next section.

\section{Weighted Sobolev spaces in dimension 1}\label{Section1D}
Denote by $I$ the interval $(0,1)$. For $\alpha \geq 0$, define the space
\be\label{V_alpha} 
\mX_\alpha(I):=\{\varphi\in \mL^2(I)\,|\,x^{\alpha/2} d_x \varphi \in \mL^2(I)\}.
\ee 
Equipped with its natural inner product
\[(\varphi,\psi)_{\mX_\alpha(I)}=\int_I \varphi\,\overline{\psi}\,dx + \int_{I}x^\alpha\, d_x \varphi\, d_x \overline{\psi}\,dx,
\]
it is clear that $\mX_\alpha(I)$ is a Hilbert space.
We remark that $\mX_0(I)=\mH^1(I)$, where $\mH^1(I)$ stands for the classical Sobolev space, and $\mX_\alpha(I) \subset \mX_\beta(I)$ for $\alpha \leq \beta$. Additionally, the functions of $\mX_\alpha(I)$ belong to $\mH^1(\eps,1)$ for any $\eps>0$, and so are continuous on $(0,1]$.

\begin{proposition}\label{prop-H12}
The space $\mX_1(I)$ is continuously embedded in $\mH^{1/2}(I)$.
\end{proposition}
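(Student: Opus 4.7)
My plan is to realize $\varphi\in\mX_1(I)$ as the trace on $I$ of a well-chosen $\mH^1$-function on a $2$D Lipschitz domain whose geometry is tailored to the weight $x$. The key observation is that the weight appearing in the definition of $\mX_1(I)$ matches exactly the area element of a triangular strip opening at the origin, which suggests that one should look for an extension whose $y$-width grows linearly with $x$.

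Concretely, I would introduce the open triangle
\[
\Omega^+:=\{(x,y)\in\R^2\,|\,0<x<1,\ 0<y<x/2\},
\]
which is Lipschitz and has $I\times\{0\}$ as one complete straight side. Given $\varphi\in\mX_1(I)$, I define its $y$-independent extension $U(x,y):=\varphi(x)$. A one-line Fubini computation yields
\[
\|U\|_{\mH^1(\Omega^+)}^2 \;=\; \int_0^1 \frac{x}{2}\bigl(|\varphi(x)|^2+|d_x\varphi(x)|^2\bigr)\,dx \;\le\; \frac{1}{2}\,\|\varphi\|_{\mX_1(I)}^2,
\]
so $U\in\mH^1(\Omega^+)$ with continuous dependence on $\varphi$. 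Applying the standard trace theorem on the Lipschitz domain $\Omega^+$ gives $U|_{\partial\Omega^+}\in\mH^{1/2}(\partial\Omega^+)$ together with a continuous estimate, and restriction to the edge $I\times\{0\}$ is continuous from $\mH^{1/2}(\partial\Omega^+)$ to $\mH^{1/2}(I)$: indeed, using the Gagliardo--Slobodeckij characterization, the $\mL^2$ part trivially decreases when one restricts to a sub-arc, and the double-integral seminorm decreases as well since its integrand is non-negative and the integration domain only shrinks. As $U|_{I\times\{0\}}=\varphi$, chaining the three estimates gives $\|\varphi\|_{\mH^{1/2}(I)}\le C\,\|\varphi\|_{\mX_1(I)}$.

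The only subtle point I foresee is invoking the trace theorem on a polygon whose corner sits exactly at the singular point of the weight; this is standard since $\Omega^+$ is Lipschitz, but one must be careful to quote a version valid without smoothness assumptions at the vertex. If one prefers to avoid $2$D trace machinery altogether, a purely one-dimensional alternative is to bound the Gagliardo seminorm directly: for $0<x<y<1$, Cauchy--Schwarz with the splitting $1=t^{-1/2}\cdot t^{1/2}$ gives $|\varphi(y)-\varphi(x)|^2\le \log(y/x)\int_x^y t\,|d_t\varphi(t)|^2\,dt$, and after Fubini the problem reduces to showing that the kernel
\[
J(t):=\int_0^t\!\int_t^1 \frac{\log(y/x)}{(y-x)^2}\,dy\,dx
\]
is uniformly bounded on $(0,1)$. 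Rescaling via $(x,y)=(tr,ts)$ turns this into the $t$-independent integral $\int_0^1\!\int_1^{+\infty} \log(s/r)/(s-r)^2\,ds\,dr$, whose finiteness follows from a first-order expansion of the integrand near the diagonal point $(r,s)=(1,1)$ (where it behaves like $1/((1-r)+(s-1))$, integrable in $2$D) and from its $\log s/s^2$ decay as $s\to+\infty$.
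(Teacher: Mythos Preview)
Your main argument is correct and follows essentially the same strategy as the paper: lift $\varphi$ to a $2$D function whose $\mH^1$ norm is controlled by $\|\varphi\|_{\mX_1(I)}$ because the weight $x$ appears naturally from the geometry, then apply the trace theorem. The paper implements this with the quarter disk $\Omega_+=\{x>0,\,y>0,\,x^2+y^2<1\}$ and the \emph{radial} extension $u(x,y)=\varphi((x^2+y^2)^{1/2})$, so that the weight $r$ comes from the polar Jacobian; you instead use a triangle with a \emph{$y$-independent} extension, so that the weight $x$ comes from the linearly growing width. Both choices are equally valid and lead to the same one-line $\mH^1$ estimate; your worry about the corner at the origin applies verbatim to the paper's domain (the quarter disk also has a vertex at $O$), and is indeed harmless since the trace theorem holds on Lipschitz domains.

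Your purely one-dimensional alternative via the Gagliardo seminorm is a genuinely different route that the paper does not pursue; it trades the trace theorem for an explicit kernel bound, which is more self-contained but requires the extra work of checking integrability near the diagonal and at infinity that you sketch.
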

\begin{remark}
A consequence of this result is that all the spaces $\mX_\alpha(I)$, $\alpha\le1$, are continuously embedded in $\mH^{1/2}(I)$.
\end{remark}
\begin{proof}
Let $\varphi$ be an element of $\mX_1(I)$. Consider the quarter of disk $\Omega_+:=\{(x,y) \in \mathbb{R}^2,\,\,x> 0,\,y >0,\,x^2+y^2 <1\}$ and define the function $u$ such that $u(x,y)=\varphi((x^2+y^2)^{1/2})$ in $\Om_+$. We observe that $u \in \mH^1(\Omega_+)$ with
\[\|u\|_{\mH^1(\Omega_+)}^2=\frac{\pi}{2}\int_0^1\left(|\varphi|^2+|d_r \varphi|^2\right)rdr \leq \frac{\pi}{2} \int_0^1\left(|\varphi|^2+r|d_r \varphi|^2\right)dr=\frac{\pi}{2}\|\varphi\|^2_{\mX_{1}(I)}.\]
By using the continuity of the trace mapping from $\mH^1(\Omega_+)$ to $\mH^{1/2}(\Gamma_+)$, we obtain that there exists a constant $C>0$ such that
 \[\|\varphi\|^2_{\mH^{1/2}(I)}=\|u|_{\Gamma_+}\|_{\mH^{1/2}(\Gamma_+)}^2 \leq C\,\|u\|_{\mH^1(\Omega_+)}^2=C\,\frac{\pi}{2}\|\varphi\|^2_{\mX_{1}(I)}.
\]
This ends the proof.
\end{proof}
In our analysis below, we will need more refined results of regularity for the functions of $\mX_\alpha(I)$. 
\begin{proposition}\label{prop-Holder}
For $\alpha<1$, $\mX_\alpha(I)$ is continuously embedded in the H\"older space $\mathscr{C}^{0,\frac{1-\alpha}{2}}(I)$. 
\end{proposition}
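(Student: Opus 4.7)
The plan is to estimate differences $|\varphi(y)-\varphi(x)|$ directly via Cauchy--Schwarz applied to the factorization $d_t\varphi = t^{-\alpha/2}\cdot t^{\alpha/2}d_t\varphi$, and then upgrade the resulting Hölder seminorm bound to a full $\mathscr{C}^{0,(1-\alpha)/2}$ estimate using the $L^2$ control on $\varphi$.

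First, I would fix $\varphi\in \mX_\alpha(I)$ and $0<x<y\leq 1$ and write
\[
|\varphi(y)-\varphi(x)|=\left|\int_x^y t^{-\alpha/2}\,t^{\alpha/2}d_t\varphi(t)\,dt\right|
\leq \left(\int_x^y t^{-\alpha}\,dt\right)^{1/2}\|t^{\alpha/2}d_t\varphi\|_{\mL^2(I)}.
\]
Since $\alpha<1$, one computes $\int_x^y t^{-\alpha}dt=(y^{1-\alpha}-x^{1-\alpha})/(1-\alpha)$, and the subadditivity of the concave map $t\mapsto t^{1-\alpha}$ on $[0,\infty)$ gives $y^{1-\alpha}-x^{1-\alpha}\leq (y-x)^{1-\alpha}$. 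Hence
\[
|\varphi(y)-\varphi(x)|\leq \frac{1}{\sqrt{1-\alpha}}(y-x)^{(1-\alpha)/2}\,\|\varphi\|_{\mX_\alpha(I)},
\]
valid for all $0<x<y\leq 1$. This already controls the $(1-\alpha)/2$-Hölder seminorm and, by the Cauchy criterion as $x\to 0^+$, shows that $\varphi$ extends continuously to the closed interval $[0,1]$, so that the estimate holds on $[0,1]\times[0,1]$.

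Finally, to turn this into an $\mathscr{C}^{0,(1-\alpha)/2}$ norm bound I would use the $L^2$ information on $\varphi$. For any $x\in[0,1]$ and $y\in I$, write $|\varphi(x)|^2\leq 2|\varphi(x)-\varphi(y)|^2+2|\varphi(y)|^2$, integrate over $y\in I$, and use the Hölder estimate together with $\int_0^1|x-y|^{1-\alpha}dy\leq 2/(2-\alpha)$ to conclude that $\|\varphi\|_{\mathscr{C}^0([0,1])}\leq C\|\varphi\|_{\mX_\alpha(I)}$ for some constant depending only on $\alpha$. Combining this with the seminorm bound yields the desired continuous embedding.

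The main (minor) obstacle is the elementary inequality $y^{1-\alpha}-x^{1-\alpha}\leq (y-x)^{1-\alpha}$, which is exactly where the restriction $\alpha<1$ enters; for $\alpha=1$ the integral of $t^{-\alpha}$ diverges logarithmically and this argument breaks down, consistent with Proposition~\ref{prop-H12} giving only $\mH^{1/2}$-regularity in that borderline case.
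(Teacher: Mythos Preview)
Your proof is correct and follows essentially the same path as the paper's: the same Cauchy--Schwarz factorization $d_t\varphi = t^{-\alpha/2}\cdot t^{\alpha/2}d_t\varphi$ and the same elementary inequality $y^{1-\alpha}-x^{1-\alpha}\le (y-x)^{1-\alpha}$ (for which the paper gives a direct derivative computation rather than invoking concavity/subadditivity). Your explicit derivation of the $\mL^\infty$ bound from the H\"older seminorm estimate together with the $\mL^2$ norm is a detail the paper's proof omits.
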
 
\begin{remark}\label{RemarkComp}
This implies that for $\alpha<1$, $\mX_\alpha(I)$ is compactly embedded in $\mathscr{C}^{0}(\overline{I})$ endowed with the norm $\|\cdot\|_{\mL^{\infty}(I)}:=\sup_{I}|\cdot|$.
\end{remark}
\begin{proof}
Consider some $\varphi\in\mX_\alpha(I)$. For $0<x\le y <1$, we can write
\[
\varphi(y)-\varphi(x)=\int_x^y d_t\varphi\,dt=\int_x^y t^{-\alpha/2}\,t^{\alpha/2}d_t\varphi\,dt.
\]
Using the Cauchy-Schwarz inequality in $\mL^2(x,y)$, this implies
\begin{equation}\label{EstimHolder0}
|\varphi(y)-\varphi(x)| \le \left(\int_x^y t^{-\alpha}\,dt\right)^{1/2}\,\|\varphi\|_{\mX_\alpha(I)} = \left(\cfrac{y^{1-\alpha}-x^{1-\alpha}}{1-\alpha}\right)^{1/2}\,\|\varphi\|_{\mX_\alpha(I)}.
\end{equation}
Now a simple analysis guarantees that for $0<x\le y <1$, there holds
\begin{equation}\label{EstimHolder}
y^{1-\alpha}-x^{1-\alpha} \le (y-x)^{1-\alpha}. 
\end{equation}
Indeed, (\ref{EstimHolder}) is equivalent to have $g(t)\ge0$ with $g(t)=(t-1)^{\eps}-t^{\eps}+1$, $\eps:=1-\alpha$ and $t=y/x\ge1$. Since $g'(t)=\eps((t-1)^{\eps-1}-t^{\eps-1})=\eps t^{\eps-1}((t/(t-1))^{1-\eps}-1)$, we deduce that $g'$ is positive on $(1,+\infty)$. As a consequence, $g$ is increasing on $(1,+\infty)$. Since $g(1)=0$, we infer that $g$ is non negative on  $[1,+\infty)$. Thus (\ref{EstimHolder}) is valid and inserting this estimate into (\ref{EstimHolder0}) gives the result of the proposition.
\end{proof}

We are now in a position to establish the main result of this section.
\begin{proposition}\label{prop-compacite}
For $\alpha<1$, the space $\mX_\alpha(I)$ is compactly embedded in $\mH^{1/2}(I)$.
\end{proposition}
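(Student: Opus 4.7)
The plan is to take an arbitrary bounded sequence $(\varphi_n)$ in $\mX_\alpha(I)$ and extract a subsequence converging in $\mH^{1/2}(I)$. By Remark \ref{RemarkComp}, after extraction $\varphi_n$ converges uniformly on $\overline{I}$ to some $\varphi$; combining this with the weak compactness of bounded sequences in the Hilbert space $\mX_\alpha(I)$, one may also arrange $\varphi_n\rightharpoonup\varphi$ weakly in $\mX_\alpha(I)$, so that $\varphi\in\mX_\alpha(I)$ with $\|\varphi\|_{\mX_\alpha(I)}\le \liminf\|\varphi_n\|_{\mX_\alpha(I)}$. Setting $\psi_n:=\varphi_n-\varphi$, the problem reduces to showing that $\|\psi_n\|_{\mH^{1/2}(I)}\to 0$, where $\psi_n$ is bounded in $\mX_\alpha(I)$ and $\psi_n\to 0$ uniformly on $\overline{I}$.

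Using the Slobodeckij characterisation of the $\mH^{1/2}(I)$ norm, the $\mL^2(I)$ contribution vanishes by the uniform convergence, so it suffices to show that the seminorm $[\psi_n]_{\mH^{1/2}(I)}^2:=\iint_{I\times I}|\psi_n(x)-\psi_n(y)|^2|x-y|^{-2}\,dx\,dy$ tends to zero. I would split the domain of integration at a threshold $\delta\in(0,1)$ to be chosen later. On the far region $\{|x-y|>\delta\}$, the crude bound $|\psi_n(x)-\psi_n(y)|\le 2\|\psi_n\|_{\mL^\infty(I)}$ combined with $\iint_{|x-y|>\delta,\,I\times I}|x-y|^{-2}\,dx\,dy\le C/\delta$ produces an upper bound of order $\|\psi_n\|_{\mL^\infty(I)}^2/\delta$, which vanishes as $n\to\infty$ for each fixed $\delta$.

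On the diagonal region $\{|x-y|<\delta\}$, the plan is to re-use the Cauchy--Schwarz estimate from the proof of Proposition \ref{prop-Holder}: for $0<x<y<1$,
\[
|\psi_n(y)-\psi_n(x)|^2\le \frac{(y-x)^{1-\alpha}}{1-\alpha}\int_x^y t^\alpha|d_t\psi_n|^2\,dt.
\]
Dividing by $(y-x)^2$ and applying Fubini, the $(x,y)$-integration over $\{0<x<y<1,\,y-x<\delta\}$ can be reorganised, at each fixed $t$, via the change of variables $u=t-x$, $v=y-t$ into a double integral of $(u+v)^{-1-\alpha}$ over $\{u,v>0,\,u+v<\delta\}$, which evaluates explicitly to $\delta^{1-\alpha}/(1-\alpha)$ thanks to $\alpha<1$. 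This yields a bound of the form
\[
\iint_{|x-y|<\delta}\frac{|\psi_n(x)-\psi_n(y)|^2}{|x-y|^2}\,dx\,dy \le \frac{C\,\delta^{1-\alpha}}{(1-\alpha)^2}\|\psi_n\|_{\mX_\alpha(I)}^2,
\]
uniform in $n$ by the $\mX_\alpha(I)$-boundedness of $(\psi_n)$.

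Given $\varepsilon>0$, one then first chooses $\delta$ small enough to make the diagonal piece less than $\varepsilon/2$, and next $n$ large enough to make the far piece less than $\varepsilon/2$; this gives $[\psi_n]_{\mH^{1/2}(I)}\to 0$ and hence the desired compact embedding. The main obstacle is the Fubini computation in the diagonal estimate: the kernel $(y-x)^{-1-\alpha}$ is not integrable near the diagonal for $\alpha\ge 0$, and it is precisely the weight $t^\alpha$ inside the inner integral that compensates to produce the decisive factor $\delta^{1-\alpha}$, which vanishes in the limit $\delta\to 0$ exactly under the hypothesis $\alpha<1$, consistent with the fact that the result is expected to fail at $\alpha=1$.
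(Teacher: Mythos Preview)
Your argument is correct and takes a genuinely different route from the paper's. The paper proceeds by splitting \emph{in space}: it isolates a neighbourhood of the origin via a cutoff $\zeta_k$, lifts the near-origin piece to a 2D function $u(x,y)=\zeta_k(r)(\tilde\varphi_n(r)-\tilde\varphi(r))$ on a quarter disk, and then invokes the trace inequality $\mH^1(\Omega_+)\to\mH^{1/2}(\Gamma_+)$ together with a Poincar\'e inequality; the smallness comes from $\int_0^{2/k}r^{1-\alpha}\,r^\alpha|d_r(\cdot)|^2\,dr\le Ck^{\alpha-1}$. The far-from-origin piece is handled by classical compactness of $\mH^1(1/k,1)\hookrightarrow\mH^{1/2}(1/k,1)$. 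You instead split \emph{along the diagonal} of $I\times I$ in the Slobodeckij integral, and your Fubini computation turns the non-integrable kernel $(y-x)^{-1-\alpha}$ into $\int_0^\delta s^{-\alpha}\,ds=\delta^{1-\alpha}/(1-\alpha)$ by picking up the factor $s$ from the length of $\{x<t<y,\ y-x=s\}$. Both proofs hinge on the same exponent $1-\alpha>0$, but yours is purely one-dimensional, avoids the 2D trace machinery, and delivers the quantitative bound $[\psi_n]_{\mH^{1/2}}^2\lesssim \delta^{1-\alpha}\|\psi_n\|_{\mX_\alpha}^2+\delta^{-1}\|\psi_n\|_{\mL^\infty}^2$ directly.

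One cosmetic remark: in your last paragraph you attribute the convergence of the diagonal integral to ``the weight $t^\alpha$ inside the inner integral''. Strictly speaking, the weight $t^\alpha$ is what allows you to factor out $\|\psi_n\|_{\mX_\alpha(I)}^2$; what tames the kernel is the extra factor of $s=y-x$ coming from the measure of the set $\{(x,y):x<t<y,\ y-x=s\}$ after Fubini. This does not affect the validity of the computation.
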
 
\begin{proof}
Let us consider a bounded sequence $(\varphi_n)$ of elements of $\mX_\alpha(I)$. We wish to prove that there exists a subsequence of $(\varphi_n)$ which converges in $\mH^{1/2}(I)$. According to Proposition \ref{prop-Holder}, $\mX_\alpha(I)$ is continuously embedded in $\mathscr{C}^{0,\frac{1-\alpha}{2}}(I)$. This implies in particular that there holds $\|\varphi_n\|_{\mL^{\infty}(I)} \le C\,\|\varphi_n\|_{\mX_\alpha(I)}$ and so $|\varphi_n(0)|\le C$. Here and below, $C>0$ is a constant which may change from one line to another but which remains independent of $n$. For $n\in\N$, let us decompose $\varphi_n$ as 
\[
\varphi_n=\tilde{\varphi}_n+\varphi_n(0),
\]
so with $\tilde{\varphi}_n:=\varphi_n-\varphi_n(0)$. According to the Bolzano-Weierstrass theorem, we can extract a subsequence of $(\varphi_n)$, still denoted $(\varphi_n)$, such that $(\varphi_n(0))$ converges in $\Cplx$. Therefore it suffices to prove that we can extract from $(\tilde{\varphi}_n)$ a subsequence which converges in $\mH^{1/2}(I)$.\\
\newline 
Since $(\tilde{\varphi}_n)$ is bounded in $\mX_\alpha(I)$, up to a subsequence, it converges weakly in $\mX_\alpha(I)$ to some $\tilde{\varphi}$. Additionally, since there holds $\tilde{\varphi}_n(0)=0$ for all $n\in\N$ and since $\mX_\alpha(I)$ is compactly embedded in $\mathscr{C}^{0}(\overline{I})$ (Remark \ref{RemarkComp}), we have $\tilde{\varphi}(0)=0$. Introduce some function $\zeta\in\mathscr{C}^{\infty}(\R)$ such that $\zeta(t)=1$ for $t\le1$, $\zeta(t)=0$ for $t\ge2$ and $0\le\zeta(t)\le1$ in $\R$. Then for $k\in\N\setminus\{0\}$, define $\zeta_k$ such that $\zeta_k(t)=\zeta(t/k)$. The triangle inequality gives
\begin{equation}\label{MainEstim}
\|\tilde{\varphi}_n-\tilde{\varphi}\|_{\mH^{1/2}(I)}\le \|\zeta_k(\tilde{\varphi}_n-\tilde{\varphi})\|_{\mH^{1/2}(I)}+\|(1-\zeta_k)(\tilde{\varphi}_n-\tilde{\varphi})\|_{\mH^{1/2}(I)}.
\end{equation}
Let us study each of the two terms of the right hand side of (\ref{MainEstim}) separately. We start with the first one. Consider again the quarter of disk $\Omega_+=\{(x,y) \in \mathbb{R}^2,\,\,x> 0,\,y >0,\,x^2+y^2 <1\}$ and define the function $u\in \mH^1(\Omega_+)$ such that 
\begin{equation}\label{Def1DTo2D}
u(x,y)=\zeta_k(r)(\tilde{\varphi}_n(r)-\tilde{\varphi}(r))\quad\mbox{ in }\Om_+\qquad\mbox{ with }\quad r=(x^2+y^2)^{1/2}.
\end{equation}
Using the continuity of the trace mapping from $\mH^1(\Omega_+)$ to $\mH^{1/2}(\Gamma_+)$ and the fact that $u$ vanishes on the curved part of $\partial\Omega_+$, which allows us to exploit a Poincar\'e inequality, we can write
\begin{equation}\label{Estim2Compactbis}
\|\zeta_k(\tilde{\varphi}_n-\tilde{\varphi})\|_{\mH^{1/2}(I)} \le C\,\| u\|_{\mH^1(\Omega_+)} \le  C\,\|\nabla u\|_{\mL^2(\Omega_+)}.
\end{equation}
But there holds
\begin{equation}\label{Estim2Compact}
\begin{array}{rcl}
\dsp\|\nabla u\|_{\mL^2(\Omega_+)}^2&=&\frac{\pi}{2}\int_0^1|d_r (\zeta_k(\tilde{\varphi}_n-\tilde{\varphi}))|^2\,rdr \\[10pt]
 & \le & \pi\int_0^1\zeta_k^2\,|d_r(\tilde{\varphi}_n-\tilde{\varphi})|^2\,rdr+\pi\int_0^1|\tilde{\varphi}_n-\tilde{\varphi}|^2(d_r\zeta_k)^2\,rdr\\[10pt]
  & \le & \pi\int_0^{2/k}\hspace{-0.1cm}r^{1-\alpha}r^{\alpha}|d_r(\tilde{\varphi}_n-\tilde{\varphi})|^2\,dr+C\,k^{-2}\pi\int_{1/k}^{2/k}|\tilde{\varphi}_n-\tilde{\varphi}|^2\,rdr \\[10pt]
  & \le  & C\,\left(k^{\alpha-1}+k^{-2}\int_{1/k}^{2/k}r^{2-\alpha}\,dr\right)\,\|\tilde{\varphi}_n-\tilde{\varphi}\|^2_{\mX_\alpha(I)} \le C\,k^{\alpha-1}.
\end{array}
\end{equation}
Note that to obtain the first inequality of the last line, we used estimate   (\ref{EstimHolder0}). We emphasize that in (\ref{Estim2Compact}), the constant $C>0$ is independent of $n$. Gathering (\ref{Estim2Compactbis}) and (\ref{Estim2Compact}), we deduce that for any $\eps>0$, we can take $k\in\N\setminus \{0\}$ large enough such that there holds 
\begin{equation}\label{Estim1Bis}
\|\zeta_k(\tilde{\varphi}_n-\tilde{\varphi})\|_{\mH^{1/2}(I)} \le \eps/2
\end{equation}
for all $n\in\N$. Let us fix one such $k$. The fact that $(\tilde{\varphi}_n)$ is bounded in $\mX_\alpha(I)$ implies that $(\tilde{\varphi}_n)$ is bounded in $\mH^1(1/k,1)$. Since this space is compactly embedded in $\mH^{1/2}(1/k,1)$, see e.g. \cite[Chap.\,1,\,Theorem\,16.3]{LiMa68}, we can extract a subsequence of $(\varphi_n)$ such that $(\tilde{\varphi}_n)$ converges strongly to $\tilde{\varphi}$ in $\mH^{1/2}(1/k,1)$. The sequence  $((1-\zeta_k)(\tilde{\varphi}_n-\tilde{\varphi}))$ is also bounded in $\mH^1(1/k,1)$ and so we can extract a subsequence which converges in $\mH^{1/2}(1/k,1)$ to some $\hat{\varphi}$. Then, starting with the triangular inequality, we can write
\[
\begin{array}{rcl}
\|\hat{\varphi}\|_{\mL^2(1/k,1)} &\le& \|\hat{\varphi}-(1-\zeta_k)(\tilde{\varphi}_n-\tilde{\varphi})\|_{\mL^2(1/k,1)}+\|(1-\zeta_k)(\tilde{\varphi}_n-\tilde{\varphi})\|_{\mL^2(1/k,1)} \\[5pt]
 &\le& \|\hat{\varphi}-(1-\zeta_k)(\tilde{\varphi}_n-\tilde{\varphi})\|_{\mH^{1/2}(1/k,1)}+\|\tilde{\varphi}_n-\tilde{\varphi}\|_{\mL^2(1/k,1)} \\[5pt]
  &\le& \|\hat{\varphi}-(1-\zeta_k)(\tilde{\varphi}_n-\tilde{\varphi})\|_{\mH^{1/2}(1/k,1)}+\|\tilde{\varphi}_n-\tilde{\varphi}\|_{\mH^{1/2}(1/k,1)}.
\end{array}
\]
Since the right hand side above can be made as small as we wish, we deduce that $\hat{\varphi}\equiv0$. This ensures that for $n$ large enough, there holds
\begin{equation}\label{Estim2Bis}
\|(1-\zeta_k)(\tilde{\varphi}_n-\tilde{\varphi})\|_{\mH^{1/2}(I)} \le \eps/2.
\end{equation}
Using (\ref{Estim1Bis}) and (\ref{Estim2Bis}) in (\ref{MainEstim}), we deduce that $(\tilde{\varphi}_n)$ converges, up to a subsequence, to $\tilde{\varphi}$ in $\mH^{1/2}(I)$, which was the searched result. 
\end{proof}

In our study, we will also need a density result, which requires the following lemma.
\begin{lemma}\label{poincare}
Assume that $\alpha\in[0,1]$. There is a constant $C>0$ (depending on $\alpha$) such that 
\[
\int_0^1 |\varphi|^2\,dx \le C\,\int_0^1 x^\alpha |d_x \varphi|^2\,dx,\qquad\forall\varphi\in\mX_\alpha(I)\mbox{ satisfying }\varphi(1)=0.
\]
\end{lemma}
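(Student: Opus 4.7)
The approach I would take is a Hardy--type Cauchy--Schwarz argument exploiting the boundary condition $\varphi(1)=0$. The plan is to obtain a pointwise bound on $|\varphi(x)|^2$ in terms of the weighted seminorm and then integrate in $x$.

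First I would observe that any $\varphi\in\mX_\alpha(I)$ lies in $\mH^1(\eps,1)$ for every $\eps>0$ (as noted after the definition of $\mX_\alpha(I)$), so by the fundamental theorem of calculus
\[
\varphi(x)=-\int_x^1 d_t\varphi\,dt\qquad\text{for every }x\in(0,1).
\]
Splitting $d_t\varphi=t^{-\alpha/2}\cdot t^{\alpha/2}d_t\varphi$ and applying Cauchy--Schwarz in $\mL^2(x,1)$ gives the pointwise estimate
\[
|\varphi(x)|^2\le\left(\int_x^1 t^{-\alpha}\,dt\right)\int_0^1 t^{\alpha}|d_t\varphi|^2\,dt.
\]

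I would then split into two cases. For $\alpha\in[0,1)$ one has $\int_x^1 t^{-\alpha}\,dt=(1-x^{1-\alpha})/(1-\alpha)\le 1/(1-\alpha)$ uniformly in $x\in(0,1)$, so integrating the pointwise estimate in $x$ over $(0,1)$ immediately yields the inequality with $C=1/(1-\alpha)$.

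The main obstacle is the endpoint $\alpha=1$, for which $\int_x^1 t^{-1}\,dt=-\ln x$ blows up as $x\to 0^+$, ruling out a uniform pointwise bound. The fix is the observation that $-\ln x$ is nonetheless integrable on $(0,1)$, with $\int_0^1(-\ln x)\,dx=1$. Integrating the pointwise inequality in $x$ over $(0,1)$ and using Fubini then gives
\[
\int_0^1|\varphi(x)|^2\,dx\le\left(\int_0^1(-\ln x)\,dx\right)\int_0^1 t\,|d_t\varphi|^2\,dt=\int_0^1 t\,|d_t\varphi|^2\,dt,
\]
i.e.\ the desired estimate with $C=1$. Combining the two cases yields the lemma for all $\alpha\in[0,1]$.
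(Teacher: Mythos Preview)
Your proof is correct and takes a genuinely different route from the paper. The paper argues by contradiction: assuming the inequality fails, it produces a normalized sequence $(\varphi_n)$ with vanishing weighted seminorm, invokes the continuous embedding $\mX_\alpha(I)\hookrightarrow\mX_1(I)\hookrightarrow\mH^{1/2}(I)$ of Proposition~\ref{prop-H12} together with the compact embedding $\mH^{1/2}(I)\hookrightarrow\mL^2(I)$ to extract a convergent subsequence, and reaches a contradiction because the limit must be constant, equal to $0$ at $x=1$, and of unit $\mL^2$ norm.

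Your argument is more elementary and fully constructive: it uses nothing beyond absolute continuity and Cauchy--Schwarz, bypasses the $\mH^{1/2}$ machinery altogether, and yields explicit constants ($C=1/(1-\alpha)$ for $\alpha<1$, $C=1$ for $\alpha=1$). In fact, if you apply Fubini directly to the pointwise bound rather than first estimating $\int_x^1 t^{-\alpha}\,dt$ uniformly, you get the single formula $C=\int_0^1\int_x^1 t^{-\alpha}\,dt\,dx=1/(2-\alpha)$, valid for all $\alpha\in[0,1]$ without a case split. The paper's contradiction method, while less explicit, is a standard template that generalizes easily to settings where a direct Hardy estimate is unavailable and where one already has compactness in hand.
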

\begin{proof}
Assume on the contrary that there exists a sequence $(\varphi_n)$ in $\mX_\alpha(I)$ such that for all $n\ne1$, 
\begin{equation}\label{ConditionContradiction}
\int_0^1 |\varphi_n|^2\,dx=1,\qquad 1 \ge n\,\int_0^1 x^\alpha |d_x \varphi|^2\,dx,\qquad \varphi_n(1)=0.
\end{equation}
Then $(\varphi_n)$ is bounded in $\mX_\alpha(I)$. But $\mX_\alpha(I)$ is continuously embedded in $\mX_1(I)$, which is itself continuously embedded in $\mH^{1/2}(I)$ according to Proposition \ref{prop-H12}. Since $\mH^{1/2}(I)$ is compactly embedded in $\mL^2(I)$ (see e.g. \cite[Chap.\,1,\,Theorem\,16.3]{LiMa68}), we deduce that $\mX_\alpha(I)$ is compactly embedded in $\mL^2(I)$. This ensures that there exists a subsequence of $(\varphi_n)$, still denoted $(\varphi_n)$, which converges to some $\varphi$ in $\mL^2(I)$. From the second relation of (\ref{ConditionContradiction}), we infer that $(\varphi_n)$ is a Cauchy sequence in $\mX_\alpha(I)$ and thus converges to some function in $\mX_\alpha(I)$. But uniqueness of the limit  in $\mL^2(I)$ ensures that this function is $\varphi$. Moreover, from (\ref{ConditionContradiction}), we find $d_x\varphi=0$ in $I$ and $\varphi(1)=0$, which implies $\varphi\equiv0$. This contradicts that $\|\varphi\|_{\mL^2(I)}=1$ and completes the proof.
\end{proof}

\begin{proposition}\label{density}
For $\alpha\in[0,1]$, the space $\mathscr{C}^\infty(\overline{I})$ is dense in $\mX_\alpha(I)$. 
\end{proposition}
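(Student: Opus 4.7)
The proof strategy I would follow consists of two steps. First, I would approximate an arbitrary $\varphi \in \mX_\alpha(I)$ by a sequence of functions in $\mH^1(I)$ that are constant in a neighbourhood of the origin. Second, I would invoke the classical density of $\mathscr{C}^\infty(\overline{I})$ in $\mH^1(I)$ to further approximate each such function by a smooth one. Since $x^\alpha \le 1$ on $I$, convergence in the $\mH^1(I)$ norm automatically entails convergence in the $\mX_\alpha(I)$ norm, so the second step closes the argument by a standard diagonal extraction.

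For the first step, for $n$ large enough I would define
\[\varphi_n(x) := \varphi(\max(x, 1/n)), \qquad x \in I,\]
which makes sense because every element of $\mX_\alpha(I)$ admits a continuous representative on $(0, 1]$, as recalled just after the definition (\ref{V_alpha}). The function $\varphi_n$ is continuous on $[0,1]$, constant on $[0, 1/n]$ with value $\varphi(1/n)$, and coincides with $\varphi$ on $[1/n, 1]$; its weak derivative is $d_x\varphi_n = \mathbbm{1}_{(1/n, 1)}\, d_x\varphi$. In particular $\varphi_n$ belongs to $\mH^1(I) \subset \mX_\alpha(I)$ since $\int_{1/n}^1 |d_x\varphi|^2\,dx \le n^\alpha \|\varphi\|^2_{\mX_\alpha(I)}$, and the convergence of the derivatives in the weighted norm is immediate:
\[\int_0^1 x^\alpha |d_x\varphi - d_x\varphi_n|^2\,dx = \int_0^{1/n} x^\alpha |d_x\varphi|^2\,dx \longrightarrow 0\]
as $n \to \infty$, by absolute continuity of the Lebesgue integral.

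The key observation, and presumably the reason Lemma \ref{poincare} has just been established, is that $\mL^2$ convergence then comes for free: since $(\varphi - \varphi_n)(1) = 0$, the Poincar\'e-type inequality of Lemma \ref{poincare} yields
\[\|\varphi - \varphi_n\|^2_{\mL^2(I)} \le C \int_0^1 x^\alpha |d_x\varphi - d_x\varphi_n|^2\,dx = C\int_0^{1/n} x^\alpha |d_x\varphi|^2\,dx \longrightarrow 0.\]
Combining both convergences gives $\varphi_n \to \varphi$ in $\mX_\alpha(I)$, and the classical density of $\mathscr{C}^\infty(\overline{I})$ in $\mH^1(I)$ then closes the proof by diagonal extraction. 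The main subtlety is simply spotting that Lemma \ref{poincare} makes the $\mL^2$ part automatic; without it, one would have to control the pointwise values $\varphi(1/n)$ directly, which is routine for $\alpha<1$ thanks to Proposition \ref{prop-Holder} but requires a genuinely logarithmic Cauchy--Schwarz estimate when $\alpha=1$, so the appeal to Lemma \ref{poincare} is what unifies the two cases.
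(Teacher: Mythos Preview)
Your proposal is correct and follows essentially the same approach as the paper: the paper defines exactly the same truncation $\varphi_n$, invokes Lemma~\ref{poincare} to control the full $\mX_\alpha(I)$ norm of $\varphi_n-\varphi$ by the weighted derivative term (which tends to zero), and then concludes via the classical density of $\mathscr{C}^\infty(\overline{I})$ in $\mH^1(I)=\mX_0(I)$ together with the continuous embedding $\mX_0(I)\hookrightarrow\mX_\alpha(I)$. Your separation of the $\mL^2$ and derivative parts before combining them is only a cosmetic variation of the same argument.
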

\begin{proof}
For $\varphi\in \mX_\alpha(I)$, we define the sequence of functions $(\varphi_n)$ in $I$ such that for all $n \geq 1$,
\be\label{hn} \varphi_n(x)=\begin{array}{|cll}
\displaystyle \varphi(1/n) & {\rm for} & x \in [0,1/n] \\[3pt]
\displaystyle \varphi(x) & {\rm for} & x \in [1/n,1]. 
\end{array}
\ee
We observe that $\varphi_n \in \mX_0(I)$. According to Lemma \ref{poincare}, there is a constant $C>0$ such that
\[
\|\varphi_n-\varphi\|^2_{\mX_\alpha(I)} \le C\,\int_0^1 x^\alpha |d_x \varphi_n- d_x \varphi|^2\,dx.
\]
But we have 
\[
\int_0^1 x^\alpha |d_x \varphi_n- d_x \varphi|^2\,dx=\int_0^{1/n} x^\alpha |d_x \varphi|^2\,dx \rightarrow 0 \quad {\rm when} \quad n \rightarrow +\infty.
\]
This proves the density of $\mX_0(I)$ in $\mX_\alpha(I)$.
Since in addition $\mathscr{C}^\infty(\overline{I})$ is dense in $\mX_0(I)=\mH^1(I)$ and $\mX_0(I)$ is continuously embedded in $\mX_\alpha(I)$, we obtain that 
$\mathscr{C}^\infty(\overline{I})$ is dense in $\mX_\alpha(I)$. 
\end{proof}

\section{The vanishing case: a positive result for $s=-1$ and $\alpha \in [0,1)$}\label{SectionAlphaMoins1}

This section is devoted to the proof of Theorem \ref{casgeneral}. As already mentioned, for $s=-1$ the Lax-Milgram theorem is not directly applicable to study Problem (\ref{faible}). To circumvent this issue, we adapt the approach proposed in \cite{bonnaillie_dambrine_herau_vial,chamaillard_chaulet_haddar} to deal with the case $\alpha=0$. The main idea, roughly speaking, consists in showing that in (\ref{faible}), the volumic terms are only a compact perturbation of the operator on the boundary. To proceed, we rewrite the 2D   weak formulation (\ref{faible}) as a 1D problem and use the compactness result of Proposition \ref{prop-compacite}.\\
\newline
To state the next proposition, we need to introduce some material. 
In what follows, for $\gamma \subset \partial \Omega$ an open subpart of the boundary of $\Omega$, on the one hand $\mH^{-1/2}(\gamma)$ denotes the dual space of $\mH^{1/2}(\gamma)$.
It coincides with the distributions in $\mH^{-1/2}(\partial \Omega)$ which are supported in $\overline{\gamma}$.
On the other hand, $\tilde{\mH}^{1/2}(\gamma)$ denotes the subspace of functions in $\mH^{1/2}(\partial \Omega)$ which are supported in $\overline{\gamma}$. The dual space of $\tilde{\mH}^{1/2}(\gamma)$ is denoted $\tilde{\mH}^{-1/2}(\gamma)$. It coincides with the restrictions to $\gamma$ of the distributions in $\mH^{-1/2}(\partial \Omega)$.
Let us warn the reader that different notations exist in the literature for the spaces above. For example in \cite{LiMa68}, $\tilde{\mH}^{1/2}(\gamma)$ and its dual space $\tilde{\mH}^{-1/2}(\gamma)$ are respectively denoted $\mH_{00}^{1/2}(\gamma)$ and $\mH^{-1/2}(\gamma)$. 

Denote by $T$ the Dirichlet-to-Neumann operator such that
\begin{equation}\label{DTN}
\begin{array}{rccl}
T: &\mH^{1/2}(\Gamma_+) &\rightarrow &  \mH^{-1/2}(\Gamma_+) \\ 
&\varphi  &\mapsto&   T\varphi=\partial_\nu u_{\varphi},
\end{array}
\end{equation}
where $u_{\varphi}$ is the unique element of $\mH^1(\Omega)$ satisfying
\be\label{uh}
\begin{array}{|rccl}
		-\Delta u_{\varphi}+u_\varphi &=& 0&  \mbox{in } \Omega\\[2pt]
		\partial_\nu u_{\varphi} &=& 0  & \mbox{on } \partial\Om\setminus\overline{\Gamma_+}\\[2pt]
				u_{\varphi} &=&\varphi & \mbox{on } \Gamma_+.
	\end{array}
	\ee
Classically one shows that $T$ is continuous from 
$\mH^{1/2}(\Gamma_+)$ to $\mH^{-1/2}(\Gamma_+)$. Additionally, set $g:=-\partial_\nu U \in \mH^{-1/2}(\Gamma_+)$ where $U$ is the unique function of $\mH^1(\Omega)$ satisfying
\be\label{U}
\begin{array}{|rccl}
-\Delta U+U &=& f&  \mbox{in } \Omega\\[2pt]
\partial_\nu U &=& 0  & \mbox{on } \partial\Om\setminus\overline{\Gamma_+}\\[2pt]
U &=&0 & \mbox{on } \Gamma_+.
\end{array}
\ee

\begin{proposition}\label{equivalence}
Assume that $s=\pm 1$ and $\alpha \in [0,1]$. If the function $u \in \mV_\alpha(\Om)$ satisfies (\ref{faible}) then $\varphi:=u|_{\Gamma_+}$ solves the problem
\be\label{1d}
\begin{array}{|l}
\mbox{Find }\varphi\in \mX_\alpha(I)\mbox{ such that for all }\psi\in \mX_\alpha(I)\\[6pt]
s\int_{\Gamma_+} x^\alpha d_x\varphi\,d_x \overline{\psi}\,dx + \langle T\varphi,\overline{\psi}\rangle_{\mH^{-1/2}(\Gamma_+),\mH^{1/2}(\Gamma_+)} = \langle  g,\overline{\psi} \rangle_{\mH^{-1/2}(\Gamma_+),\mH^{1/2}(\Gamma_+)}.
\end{array}
\ee
Conversely, if $\varphi\in \mX_\alpha(I)$ satisfies (\ref{1d}), then $u:=u_{\varphi}+U \in \mV_\alpha(\Om)$ solves (\ref{faible}). 
\end{proposition}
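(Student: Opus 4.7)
The plan is to exploit the natural decomposition $u = u_\varphi + U$, in which $u_\varphi$ lifts the Dirichlet trace $\varphi = u|_{\Gamma_+}$ and $U$ absorbs the source $f$, together with a careful use of Green's identity applied to the pair $(u, u_\psi)$ for arbitrary test traces $\psi \in \mX_\alpha(I)$.

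As a preliminary step, I will verify that any lift $u_\psi$ with $\psi \in \mX_\alpha(I)$ actually belongs to $\mV_\alpha(\Om)$: since $u_\psi|_{\Gamma_+} = \psi$, the tangential derivative of its trace coincides with $d_x \psi$, and hence $x^{\alpha/2} \partial_x u_\psi|_{\Gamma_+} = x^{\alpha/2} d_x \psi \in \mL^2(\Gamma_+)$ by the very definition of $\mX_\alpha(I)$. The same observation applied to $u_\varphi + U$ will give $u_\varphi + U \in \mV_\alpha(\Om)$ as soon as $\varphi \in \mX_\alpha(I)$.

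For the direct implication, I start from $u \in \mV_\alpha(\Om)$ satisfying (\ref{faible}) and set $\varphi := u|_{\Gamma_+} \in \mX_\alpha(I)$. By testing (\ref{faible}) against elements of $\mathscr{C}^{\infty}_c(\Omega)$ and of $\mathscr{C}^\infty(\overline{\Omega})$ supported away from $\overline{\Gamma_+}$ (compare Theorem \ref{faible_fort}), one obtains $-\Delta u + u = f$ in $\Omega$ and $\partial_\nu u = 0$ on $\partial\Omega \setminus \overline{\Gamma_+}$. Combined with the trace $\varphi$ on $\Gamma_+$, uniqueness of the underlying mixed Dirichlet--Neumann problem identifies $u$ with $u_\varphi + U$, which yields the key relation $\partial_\nu u|_{\Gamma_+} = T\varphi - g$ in $\mH^{-1/2}(\Gamma_+)$. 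Choosing $v = u_\psi$ in (\ref{faible}) for arbitrary $\psi \in \mX_\alpha(I)$ and applying the generalized Green's identity to $u$, the volume term becomes $\int_\Omega f\,\overline{u_\psi}\,dxdy + \langle \partial_\nu u, \overline{\psi}\rangle_{\mH^{-1/2}(\Gamma_+), \mH^{1/2}(\Gamma_+)}$, the boundary pairing localizing on $\Gamma_+$ because $\partial_\nu u$ is supported in $\overline{\Gamma_+}$. Cancelling the common source term and substituting $\partial_\nu u|_{\Gamma_+} = T\varphi - g$ reproduces exactly (\ref{1d}).

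For the converse, given $\varphi \in \mX_\alpha(I)$ that satisfies (\ref{1d}), I set $u := u_\varphi + U \in \mV_\alpha(\Om)$. For an arbitrary $v \in \mV_\alpha(\Om)$ with $\psi := v|_{\Gamma_+} \in \mX_\alpha(I)$, the same Green's identity applied to $u$ (using $-\Delta u + u = f$, $\partial_\nu u = 0$ on $\partial\Omega\setminus\overline{\Gamma_+}$ and $\partial_\nu u|_{\Gamma_+} = T\varphi - g$) gives $\int_\Omega \nabla u \cdot \nabla \overline{v} + u\overline{v}\,dxdy = \int_\Omega f\overline{v}\,dxdy + \langle T\varphi - g, \overline{\psi}\rangle$. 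Adding the impedance contribution $s\int_{\Gamma_+} x^\alpha d_x\varphi\, d_x\overline{\psi}\,dx$ and invoking (\ref{1d}) to cancel the duality pairing against the impedance integral produces $a(u,v) = \int_\Omega f\overline{v}\,dxdy = \ell(v)$, as desired.

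The step I expect to be the most delicate is the careful handling of the $\mH^{-1/2}(\Gamma_+) \times \mH^{1/2}(\Gamma_+)$ pairing and, in particular, the localization of the boundary integral from Green's identity on the full $\partial\Omega$ to $\Gamma_+$ alone. This localization rests on the fact that $\partial_\nu u_\varphi$ and $\partial_\nu U$, viewed as elements of $\mH^{-1/2}(\partial\Omega)$, are supported in $\overline{\Gamma_+}$, so that the pairing with any $\mH^{1/2}(\partial\Omega)$ boundary trace depends only on its restriction to $\Gamma_+$. Once this localization and the lifting property of the preliminary step are secured, the remaining manipulations are routine applications of Green's identity.
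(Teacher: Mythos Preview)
Your proposal is correct and follows essentially the same strategy as the paper: decompose $u=u_\varphi+U$, identify $\partial_\nu u|_{\Gamma_+}=T\varphi-g$, and pass between (\ref{faible}) and (\ref{1d}) via Green's formula. The one noteworthy variant is in the direct implication: the paper first obtains the identity for test functions with trace $\phi\in\mathscr{C}^\infty(\overline{\Gamma_+})$ and then invokes the density of $\mathscr{C}^\infty(\overline{\Gamma_+})$ in $\mX_\alpha(I)$ (Proposition~\ref{density}), whereas you bypass density entirely by taking $v=u_\psi$ with $\psi\in\mX_\alpha(I)$, having first checked that $u_\psi\in\mV_\alpha(\Om)$; this is a clean shortcut and the localization of the boundary pairing you flag as delicate is justified exactly by the paper's convention that $\mH^{-1/2}(\Gamma_+)$ consists of the elements of $\mH^{-1/2}(\partial\Omega)$ supported in $\overline{\Gamma_+}$.
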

\begin{proof}
Assume that $u \in \mV_\alpha(\Om)$ solves (\ref{faible}). Then $\varphi:=u|_{\Gamma_+}$ belongs to $\mX_{\alpha}(I)$ and for all $v \in \mV_\alpha(\Om)$, we have
\begin{equation}\label{faiblebis}
\int_\Omega \nabla u\cdot \nabla \overline{v}+u\overline{v}\,dxdy + s\int_{\Gamma_+}x^\alpha\, \partial_x u\, \partial_x \overline{v}\,dx=\int_\Omega f\,\overline{v}\,dxdy.\end{equation}
The function $U\in\mH^1(\Omega)$ defined as the solution of (\ref{U}) satisfies, for all $v \in \mV_\alpha(\Om) \subset \mH^1(\Omega)$,
\[\int_\Omega \nabla U \cdot \nabla \overline{v}+U\overline{v}\,dxdy-\langle \partial_\nu U,\overline{v} \rangle_{\mH^{-1/2}(\Gamma_+),\mH^{1/2}(\Gamma_+)}=\int_\Omega f\,\overline{v}\,dxdy.\]
By taking the difference of these two identities, we obtain, for all $v \in \mV_\alpha(\Om)$,
\be\label{eq1}
\int_\Omega \nabla (u-U)\cdot \nabla \overline{v}+(u-U)\overline{v}\,dxdy + s\int_{\Gamma_+}x^\alpha\, \partial_x u\, \partial_x \overline{v}\,dx=\langle g,\overline{v} \rangle_{\mH^{-1/2}(\Gamma_+),\mH^{1/2}(\Gamma_+)}.
\ee
On the other hand, by choosing $v\in \mathscr{C}_0^\infty(\Omega)$ in (\ref{faiblebis}), we get 
\begin{equation}\label{EqnVol}
-\Delta u+u=f\mbox{ in }\Omega.
\end{equation}
Now consider some $\phi\in \tilde{\mH}^{1/2}(\partial\Om\setminus\overline{\Gamma_+})$, extend it by zero to $\Gamma_+$ and introduce a function $v\in\mH^1(\Om)$ such that $v|_{\partial\Om}=\phi$. Then clearly $v$ belongs to $\mV_\alpha(\Om)$. Multiplying (\ref{EqnVol}) by this $v$, integrating by parts and taking the difference with (\ref{faiblebis}), we find $\langle \partial_\nu u,\overline{\phi} \rangle_{\mH^{-1/2}(\partial\Om),\mH^{1/2}(\partial\Om)}=0$. Since this is valid for all $\phi\in \tilde{\mH}^{1/2}(\partial\Om\setminus\overline{\Gamma_+})$, we obtain $\partial_\nu u=0$ on 
$\partial\Om\setminus\overline{\Gamma_+}$. As a result, we get that
the function $u-U \in \mH^1(\Omega)$ solves (\ref{uh}) with $\varphi=u|_{\Gamma_+}$. This implies that for all $v \in \mV_\alpha(\Om)$, there holds
\be\int_\Omega \nabla (u-U) \cdot \nabla \overline{v}+(u-U)\overline{v}\,dxdy= \langle T u,\overline{v} \rangle_{\mH^{-1/2}(\Gamma_+),\mH^{1/2}(\Gamma_+)}.\label{eq2}\ee
Gathering (\ref{eq1}) and (\ref{eq2}), we deduce that for all $v \in \mV_\alpha(\Om)$, we have
\begin{equation}\label{IdenVaria}
s\int_{\Gamma_+}x^\alpha\, d_x \varphi\, \partial_x \overline{v}\,dx + \langle T\varphi,\overline{v} \rangle_{\mH^{-1/2}(\Gamma_+),\mH^{1/2}(\Gamma_+)} =\langle g,\overline{v} \rangle_{\mH^{-1/2}(\Gamma_+),\mH^{1/2}(\Gamma_+)}.
\end{equation}
Next, pick some $\phi \in\mathscr{C}^\infty(\overline{\Gamma_+})$, extend it to $\partial\Om\setminus\overline{\Gamma_+}$ to create an element $\tilde{\phi}\in\mH^{1/2}(\partial\Om)$ and consider some function $v\in\mH^1(\Om)$ such that $v|_{\partial\Om}=\tilde{\phi}$. Obviously such a $v$ belongs to $\mV_\alpha(\Om)$. Inserting it in (\ref{IdenVaria}), we obtain
\[
s\int_{\Gamma_+}x^\alpha\, d_x\varphi\, d_x \overline{\phi}\,dx + \langle T \varphi,\overline{\phi} \rangle_{\mH^{-1/2}(\Gamma_+),\mH^{1/2}(\Gamma_+)} =\langle g,\overline{\phi} \rangle_{\mH^{-1/2}(\Gamma_+),\mH^{1/2}(\Gamma_+)}.
\]
Since this is true for all $\phi \in\mathscr{C}^\infty(\overline{\Gamma_+})$, using the density of $\mathscr{C}^\infty(\overline{\Gamma_+})$ in $\mX_\alpha(I)$ which is established in Proposition \ref{density}, we conclude that $\varphi=u|_{\Gamma_+}$ solves (\ref{1d}). \\
\newline
Now let us show the second part of the statement. Assume that $\varphi\in \mX_\alpha(I)$ satisfies (\ref{1d}). Denote respectively by $u_{\varphi}$, $U$ the solutions of (\ref{uh}), (\ref{U}). For all $v \in \mV_\alpha(\Om)$, $v|_{\Gamma_+}$ is an element of $\mX_\alpha(I)$. As a consequence, for all $v \in \mV_\alpha(\Om)$, we have
\[
s\int_{\Gamma_+}x^\alpha\, \partial_x u_{\varphi}\, \partial_x \overline{v}\,dx + \langle \partial_\nu u_{\varphi},\overline{v} \rangle_{\mH^{-1/2}(\Gamma_+),\mH^{1/2}(\Gamma_+)} =-\langle \partial_\nu U,\overline{v} \rangle_{\mH^{-1/2}(\Gamma_+),\mH^{1/2}(\Gamma_+)}.
\]
Using additionally that $U=0$ on $\Gamma_+$ implies $\partial_xU=0$ on $\Gamma_+$, we get for all $v \in \mV_\alpha(\Om)$, 
\[
s\int_{\Gamma_+}x^\alpha\, \partial_x (u_{\varphi}+U)\, \partial_x \overline{v}\,dx + \langle \partial_\nu (u_{\varphi}+U),\overline{v} \rangle_{\mH^{-1/2}(\partial \Omega),\mH^{1/2}(\partial \Omega)} =0.
\]
Finally, since 
\[\langle \partial_\nu (u_{\varphi}+U),\overline{v} \rangle_{\mH^{-1/2}(\partial \Omega),\mH^{1/2}(\partial \Omega)}= \int_\Omega \nabla (u_{\varphi}+U)\cdot \nabla \overline{v}+(u_\varphi+U)\overline{v}\,dxdy -\int_\Omega f\,\overline{v}\,dxdy,
\]
we deduce that $u=u_{\varphi}+U \in \mV_\alpha(\Om)$ satisfies Problem (\ref{faible}).
 \end{proof}	 
We are now in a position to show Theorem \ref{casgeneral}.
\begin{proof}[Proof of Theorem \ref{casgeneral}]
With the Riesz representation theorem, define the continuous operators $S,\,K: \mX_\alpha(I) \rightarrow \mX_\alpha(I)$ such that for all $\varphi,\psi \in \mX_\alpha(I)$
\[
(S\varphi,\psi)_{\mX_\alpha(I)}=\int_{\Gamma_+} x^\alpha d_x\varphi\,d_x \overline{\psi}+\varphi\overline{\psi}\,dx
\]
\[
(K\varphi,\psi)_{\mX_\alpha(I)}=\langle T\varphi,\overline{\psi}\rangle_{\mH^{-1/2}(\Gamma_+),\mH^{1/2}(\Gamma_+)}-\int_{\Gamma_+} \varphi\overline{\psi}.
\]
Clearly $S$ is an isomorphism. On the other hand, in Proposition \ref{prop-compacite} we showed that the embedding of $\mX_\alpha(I)$ in $\mH^{1/2}(\Gamma_+)$ is compact when $\alpha \in [0,1)$. Using this result and the fact that the operator $T: \mH^{1/2}(\Gamma_+)\rightarrow\mH^{-1/2}(\Gamma_+)$ defined in (\ref{DTN}) is continuous, one proves that $K: \mX_\alpha(I) \rightarrow \mX_\alpha(I)$ is compact. This guarantees that $S+K: \mX_\alpha(I) \rightarrow \mX_\alpha(I)$ is Fredholm of index zero. In particular, if only the null function solves (\ref{1d}) with $g\equiv0$, then it admits a solution. The equivalence between
problems (\ref{faible}) and (\ref{1d}) given by Proposition \ref{equivalence} completes the proof. 
\end{proof}

\section{The vanishing case: a negative result for $s=-1$ and $\alpha=1$}\label{SectionWithSingus}

In the previous section, we proved that the problem (\ref{faible}) in the bad sign case $s=-1$ is well-posed in the Fredholm sense for all $\alpha\in[0,1)$. Here we still consider (\ref{faible}) with $s=-1$ but now with $\alpha=1$. In that situation, we show Theorem \ref{cas1}, namely that the operator $A: \mV_1(\Om) \rightarrow \mV_1(\Om)$ associated with (\ref{faible}) and defined via (\ref{DefOpA}) is not of Fredholm type.\\
\newline
For $s=-1$ and $\alpha=1$, the variational formulation of our problem writes:
find $u \in \mV_1(\Om)$ such that for all $v \in \mV_1(\Om)$,
	\be	\label{faible_1}
	\int_\Omega \nabla u\cdot \nabla \overline{v}+u\overline{v}\,dxdy -\int_{\Gamma_+}x\, \partial_x u\, \partial_x \overline{v}\,dx=\int_\Omega f\overline{v}\,dxdy.
	\ee		
If $u$ solves (\ref{faible_1}), then $u$ satisfies 
the strong problem:
\be \label{fort_1}
\begin{array}{|rccl}
-\Delta u +u&=& f&  \mbox{in } \Omega\\[3pt]
\partial_\nu u&=&0 & \mbox{on } \partial\Om\setminus\overline{\Gamma_+}\\[3pt]
\partial_\nu u &=& -\partial_x (x\, \partial_x u) & \mbox{on } \Gamma_+ .
\end{array}	
\ee 
We will see that the Fredholm property for $A$ is lost due to the existence of strongly oscillating singularities at the origin. We start by computing these singularities. The latter are defined as the functions which solve the problem corresponding to the principal part of (\ref{fort_1}) in a neighbourhood of point $O:=(0,0)$ with $f\equiv0$ and which are of the form $\mathfrak{s}(x,y)=r^\lambda\varphi(\theta)$, where $\lambda\in\Cplx$ and $\varphi$ are to be determined. Here $(r,\theta)$ are the polar coordinates  centered at $O$. From the equation $\Delta \mathfrak{s}=0$, we find first $d^2_{\theta\theta}\varphi(\theta)+\lambda^2\varphi(\theta)=0$ in $(0,\pi)$. Then the boundary conditions of (\ref{fort_1}) on $\Gamma_-$ and $\Gamma_+$ lead to the relations
\[
d_{\theta}\varphi(\pi)=0 \qquad\mbox{ and }\qquad d_\theta\varphi(0)=\lambda^2\varphi(0),
\]
respectively.
Thus, we get
\be\label{system} 
\begin{array}{|rccl}
		d^2_{\theta\theta}\varphi(\theta)+\lambda^2\varphi(\theta) &=& 0&  \mbox{in } (0,\pi)\\
		d_\theta \varphi(\pi) &=& 0& \\
		d_\theta\varphi(0)&=& \lambda^2\varphi(0). & 
	\end{array}
\ee
Exploiting lines 1 and 2 of (\ref{system}), we find that $\varphi$ must be of the form $\varphi(\theta)=C\cos(\lambda(\theta-\pi))$ where $C$ is a constant. From the third line of (\ref{system}), we deduce that this system admits a non zero solution if and only if $\lambda$ solves 
\begin{equation}\label{RelationDispersion}
\sin(\lambda\pi)=\lambda \cos(\lambda\pi).
\end{equation}
Of particular importance are the singular exponents $\lambda \neq 0$ which are non zero and purely imaginary, i.e. of the form $\lambda=i\tau$ with $\tau\in\mathbb{R}\setminus\{0\}$. Indeed, in this case, we have 
\begin{equation}\label{defBHsingu}
\mathfrak{s}(x,y)=r^{i\tau}\varphi(\theta)=e^{i\tau\ln r}\varphi(\theta),
\end{equation}
which corresponds to a function which oscillates an infinite number of times without decay when $r$ tends to zero (see a rough approximation, because the mesh is fixed, in Figure \ref{Fig_sm1_alpha_1}). Such singularities are just outside of $\mV_1(\Om)$ (and of $\mH^1(\Om)$) in the sense that they are not in $\mV_1(\Om)$ while $r^{\eps}\mathfrak{s}$ belongs to $\mV_1(\Om)$ for all $\eps>0$. In general such singularities are absent and appear only for particular problems. Among them, let us mention certain problems with sign-changing dielectric constants in presence of inclusions of metals or metamaterials with non smooth shapes in electromagnetism (see \cite{BoCCSub,BCCC16,BoCR23}). In this context, the strongly oscillating singularities are sometimes called black hole waves. They are met also in the study of the Laplace operator with singular Robin boundary conditions \cite{NaPo18} or in singular geometries with cusps \cite{NaPT18}.\\
\newline
For our problem, one finds that $\lambda=i\tau$, $\tau\in\mathbb{R}\setminus\{0\}$, solves (\ref{RelationDispersion}) if  and only if 
\begin{equation}\label{EqnOscillating}
\tanh(\tau\pi)=\tau.
\end{equation}
This equation has exactly two solutions which are opposite sign. The idea is to construct a Weyl sequence from these particular singularities in order to prove that the problem (\ref{faible_1}) is not of Fredhom type.
\begin{remark}
If the condition on $\Gamma_+$ was $\partial_\nu u=+\partial_x(x\partial_x u)$ (with the good sign) and not $\partial_\nu u=-\partial_x(x\partial_x u)$ as in (\ref{fort_1}), we would find that the equation characterizing the singular exponents is
\[
\sin(\lambda\pi)=-\lambda \cos(\lambda\pi).
\]
Observe that there is no solution of this equation in $\mathbb{R} i\setminus\{0\}$. This is coherent with the fact that the corresponding problem (\ref{faible}) is well-posed for $s=1$ and $\alpha=1$ (Theorem \ref{good_sign}).
\end{remark}

In our analysis below, we need the two following density results.
\begin{lemma}\label{infini}
For $\alpha \in [0,1]$, the space $\mathscr{C}^\infty(\overline{\Omega})$ is dense in $\mV_\alpha(\Om)$.
\end{lemma}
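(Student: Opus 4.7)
The strategy is to decompose any $v\in\mV_\alpha(\Om)$ into a ``trace part'' carrying the boundary data on $\Gamma_+$ and an ``interior part'' with vanishing trace on $\Gamma_+$, and to approximate the two pieces separately, using Proposition \ref{density} for the former and a classical $\mH^1$-density result for the latter.

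The core technical tool is a continuous linear lifting $L:\mX_\alpha(I)\to \mV_\alpha(\Om)$ with $(L\psi)|_{\Gamma_+}=\psi$ that, moreover, sends sufficiently nice smooth inputs to $\mathscr{C}^\infty(\overline\Om)$. Away from the origin, the classical tensor-product lift $\psi\mapsto \psi(x)\eta(y)$, with $\eta$ a smooth cutoff in $y$, works: it is continuous into $\mH^1$ on any region $\{x\geq \delta\}\cap\Om$ because $\mX_\alpha(I)$ embeds in $\mH^1_{\loc}((0,1])$. Near the origin, where $\Om$ is locally the upper half-disk, this naive tensor lift would require $\psi'\in\mL^2$ near $0$ and hence fails for $\alpha>0$. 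Instead one uses the radial extension $\psi\mapsto \psi(\sqrt{x^2+y^2})$ appearing in the proof of Proposition \ref{prop-H12}, which maps $\mX_\alpha(I)\subset \mX_1(I)$ continuously into $\mH^1$ of the half-disk and whose tangential derivative on $\Gamma_+$ equals $\psi'$, giving the required weighted trace control. A smooth partition of unity in $r:=\sqrt{x^2+y^2}$ glues these two local liftings into a single $L$.

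Granted $L$, the proof proceeds as follows. Given $v\in \mV_\alpha(\Om)$, set $\varphi:=v|_{\Gamma_+}\in \mX_\alpha(I)$ and write $v=L(\varphi)+w$ with $w:=v-L(\varphi)\in \mH^1(\Om)$ of zero trace on $\Gamma_+$. By Proposition \ref{density}, pick $\psi_n\in \mathscr{C}^\infty(\overline I)$ with $\psi_n\to\varphi$ in $\mX_\alpha(I)$; a minor refinement of its proof -- convolving the piecewise function (\ref{hn}) with a one-sided mollifier supported in $[-1/(4n),0]$ -- additionally ensures that each $\psi_n$ can be taken constant on some interval $[0,\eps_n]$. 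This constancy makes the radial part of $L(\psi_n)$ constant near the origin, so that $L(\psi_n)\in\mathscr{C}^\infty(\overline\Om)$, and by continuity $L(\psi_n)\to L(\varphi)$ in $\mV_\alpha(\Om)$. For the residual $w$, the classical density of $\{\phi\in\mathscr{C}^\infty(\overline\Om):\phi|_{\overline{\Gamma_+}}=0\}$ in $\{u\in\mH^1(\Om):u|_{\Gamma_+}=0\}$ (valid for Lipschitz domains, see e.g.\ Grisvard) provides $w_n\in\mathscr{C}^\infty(\overline\Om)$ with $w_n|_{\overline{\Gamma_+}}=0$ and $w_n\to w$ in $\mH^1(\Om)$. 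Since $\partial_x w_n|_{\Gamma_+}\equiv 0$, the weighted-trace contribution to the $\mV_\alpha$-norm vanishes and $w_n\to w$ also in $\mV_\alpha(\Om)$. The sequence $v_n:=L(\psi_n)+w_n\in\mathscr{C}^\infty(\overline\Om)$ then converges to $v$ in $\mV_\alpha(\Om)$.

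The main obstacle is the construction of $L$. Continuity in the weighted norm forces the radial extension near the origin (a purely tangential lift has no control on the unweighted derivative $\psi'$ there for $\alpha>0$), whereas requiring the lift of a smooth $\psi_n$ to be smooth up to the origin forces these $\psi_n$ to be locally constant at $0$. Both issues are resolved by the interplay between the half-disk geometry already exploited in Proposition \ref{prop-H12} and a mild refinement of the smoothing procedure of Proposition \ref{density}.
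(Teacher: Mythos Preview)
Your strategy is sound and shares with the paper the core idea of splitting $v$ into a lifted trace plus an $\mH^1$-function vanishing on $\Gamma_+$, but the execution is different. The paper works in two stages: first it shows $\mathscr{C}^\infty(\overline{\Omega})$ is dense in $\mV_0(\Om)$ by lifting an $\mH^1(\partial\Om)$-extension of the trace into $\mH^{3/2}(\Om)$ (so that smooth approximation in $\mH^{3/2}$ automatically controls the $\mH^1(\Gamma_+)$ trace norm); then it shows $\mV_0(\Om)$ is dense in $\mV_\alpha(\Om)$ by replacing $\varphi=u|_{\Gamma_+}$ with the truncation $\varphi_n$ of (\ref{hn}) and lifting the difference $\varphi-\varphi_n$ via a harmonic-type minimisation, using only that $\varphi_n\to\varphi$ in $\mH^{1/2}(\Gamma_+)$ (Proposition~\ref{prop-H12}). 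No explicit lifting operator is ever constructed. Your route bypasses the intermediate space $\mV_0(\Om)$ by building a concrete $L$ and approximating in one shot; this is more hands-on and makes the half-disk geometry near $O$ explicit, at the price of coordinating the local liftings carefully. One small imprecision: a partition of unity in $r$ alone does not restrict the tensor piece to $\{x\ge\delta\}$ (there are points with large $r$ but small $|x|$), so you must either take the $y$-support of $\eta$ small relative to the $r$-cutoff, or more simply apply the cutoff to $\psi$ in the $x$-variable so that the tensor part involves only $(1-\chi)\psi\in\mH^1(I)$. With that adjustment the construction goes through, and your refinement forcing $\psi_n$ to be constant near $0$ is exactly what is needed for smoothness of the radial lift at the origin.
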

\begin{proof}
We work in three steps. First, we show the result for $\alpha=0$. Then we prove that $\mV_0(\Om)$ is dense in $\mV_\alpha(\Om)$. Finally, we  conclude.\\
\newline
Let us establish the density of $\mathscr{C}^\infty(\overline{\Omega})$ in
$\mV_0(\Om)=\{v \in \mH^1(\Omega),\,v|_{\Gamma_+} \in \mH^1(\Gamma_+)\}$.
We take $u \in \mV_0(\Om)$ and denote $\varphi:=u|_{\Gamma_+} \in \mH^1(\Gamma_+)$. Introduce $\tilde{\varphi}\in\mH^1(\partial \Omega)$ an extension of $\varphi$. There exists $U\in\mH^{3/2}(\Omega)$ such that $U|_{\partial\Om}=\tilde{\varphi}$. Now observe that the function $\hat{u}:=u-U \in \mH^1(\Omega)$ is such that $\hat{u}|_{\Gamma_+}=0$. As a consequence, it can be approximated in $\mH^1(\Om)$ by a sequence $(\hat{u}_n)$ of elements of $\mathscr{C}_0^\infty(\overline{\Omega}\setminus\Gamma_+)$. Introduce also $(U_n)$ a sequence of functions of $\mathscr{C}^\infty(\overline{\Omega})$ which converges to $U$ in $\mH^{3/2}(\Omega)$. Then $(\hat{u}_n+U_n)$ is a sequence in $\mathscr{C}^\infty(\overline{\Omega})$ which converges to $u$ in $\mH^1(\Omega)$ and such that $(\hat{u}_n+U_n)|_{\Gamma_+}$ converges to $\varphi$ in $\mH^1(\Gamma_+)$. This proves that $(\hat{u}_n+U_n)$ converges to $u$ in $\mV_0(\Om)$ and completes the first step.\\
\newline
Let us now prove the density of $\mV_0(\Om)$ in $\mV_\alpha(\Om)$ for $\alpha \in [0,1]$, taking inspiration from \cite{luneville_mercier}.
Consider $u \in \mV_\alpha(\Om)$ and denote $\varphi:=u|_{\Gamma_+} \in \mX_\alpha(I)$.
We introduce the function $\varphi_n$ given by (\ref{hn}), which satisfies $\varphi_n \rightarrow \varphi$ in $\mX_\alpha(I)$.
From Proposition \ref{prop-H12}, since $\alpha \in [0,1]$, we have that $\varphi_n \rightarrow \varphi$ in $\mH^{1/2}(\Gamma_+)$.
Introduce $u_n \in \mH^1(\Omega)$ the function satisfying $u_n|_{\Gamma_+}=\varphi_n$ and 
\[(u_n,v)_{\mH^1(\Omega)}=(u,v)_{\mH^1(\Omega)},\qquad \forall v \in \mH^1(\Omega),\,v|_{\Gamma_+}=0.\]
We have $u_n \in \mV_0(\Om)$. Additionally, $e_n:=u-u_n$ is such that $e_n \in \mH^1(\Omega)$, $e_n|_{\Gamma_+}=\varphi-\varphi_n$ and
\[
(e_n,v)_{\mH^1(\Omega)}=0,\qquad \forall v \in \mH^1(\Omega),\,v|_{\Gamma_+}=0.
\]
Hence there exists a constant $C>0$ independent of $n$ such that
\[\|e_n\|_{\mH^1(\Omega)} \leq C\, \|\varphi-\varphi_n\|_{\mH^{1/2}(\Gamma_+)}.\]
This guarantees that $(u_n)$ converges to $u$ in $\mV_0(\Om)$, which completes the second step.

To conclude, it suffices to use the two previous steps and to remark that the convergence in $\mV_0(\Om)$ implies the convergence in $\mV_\alpha(\Om)$.
\end{proof}	

\begin{lemma}\label{infini0}
The space $\mathscr{C}^\infty_0(\overline{\Om}\setminus\{O\})$ is dense in $\mV_1(\Om)$. 
\end{lemma}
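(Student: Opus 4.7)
The plan is to use Lemma \ref{infini}, which gives the density of $\mathscr{C}^\infty(\overline{\Om})$ in $\mV_1(\Om)$. It then suffices to show that any $u\in\mathscr{C}^\infty(\overline{\Om})$ can be approximated in the $\mV_1(\Om)$ norm by functions in $\mathscr{C}^\infty_0(\overline{\Om}\setminus\{O\})$. The natural candidate is $u_\eps:=\chi_\eps u$, where $\chi_\eps$ is a smooth radial cutoff vanishing in a small neighbourhood of $O$ and equal to one outside a slightly larger neighbourhood; then $u_\eps\in \mathscr{C}^\infty_0(\overline{\Om}\setminus\{O\})$, and the task is to verify that $u_\eps\to u$ in $\mV_1(\Om)$ as $\eps\to 0$.

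The subtle point is the choice of $\chi_\eps$: a linear rescaling of the form $\chi(r/\eps)$ only yields $\|\nabla\chi_\eps\|_{\mL^2(\Om)}=O(1)$, which is not small. This is the classical obstruction that a point in $\R^2$ has vanishing $\mH^1$-capacity only through a logarithmic scaling. I will therefore take (a smooth version of)
\[
\chi_\eps(r)=\begin{cases}0&\text{if }r\le\eps,\\[2pt] \dfrac{2\ln(r/\eps)}{\ln(1/\eps)}&\text{if }\eps\le r\le\sqrt{\eps},\\[2pt] 1&\text{if }r\ge\sqrt{\eps},\end{cases}
\]
with $r=(x^2+y^2)^{1/2}$. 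A direct calculation gives $|\nabla\chi_\eps(r)|\le C/(r\,\ln(1/\eps))$ on the annulus $\eps\le r\le\sqrt{\eps}$, and therefore $\|\nabla\chi_\eps\|_{\mL^2(\Om)}^2=O(1/\ln(1/\eps))\to 0$.

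With this cutoff in hand, the three required convergences follow by standard manipulations. The bulk $\mL^2$ convergence is immediate by dominated convergence, and the bulk gradient convergence uses the decomposition $\nabla(\chi_\eps u)-\nabla u=(\chi_\eps-1)\nabla u+u\,\nabla\chi_\eps$ together with the bound $\|u\,\nabla\chi_\eps\|_{\mL^2(\Om)}\le\|u\|_{\mL^\infty(\Om)}\|\nabla\chi_\eps\|_{\mL^2(\Om)}\to 0$. For the boundary term on $\Gamma_+$, where $r=x$, the same decomposition reduces everything to estimating
\[
\int_0^1 x\,|u|^2\,|\chi_\eps'(x)|^2\,dx\le \frac{C\,\|u\|_{\mL^\infty(\Om)}^2}{(\ln(1/\eps))^2}\int_\eps^{\sqrt{\eps}}\frac{dx}{x}=\frac{C\,\|u\|_{\mL^\infty(\Om)}^2}{2\,\ln(1/\eps)}\xrightarrow[\eps\to 0]{}0,
\]
where the extra factor $x$ in the weight cancels one power of $1/x^2$ from $|\chi_\eps'|^2$, leaving a logarithmically divergent integral which is tamed by the prefactor $1/(\ln(1/\eps))^2$.

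The main obstacle is precisely this boundary estimate: the logarithmic-capacity argument forces a cutoff whose derivative blows up like $1/r$ near the origin, and one must check that this behaviour is compatible both with the bulk $\mH^1(\Om)$ norm and with the boundary norm weighted by $x^{1/2}$, which is weakest exactly where $\chi_\eps'$ is largest. The computation above shows that the combination is borderline integrable, but that the $1/\ln(1/\eps)$ prefactor is enough to push the integral to zero.
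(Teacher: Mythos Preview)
Your proof is correct and follows essentially the same route as the paper: reduce to smooth $u$ via Lemma~\ref{infini}, then multiply by a logarithmic radial cutoff near $O$ and verify that all three pieces of the $\mV_1(\Om)$-norm of the remainder vanish. The paper uses the cutoff $\tau_\eps$ supported on the fixed annulus $(\eps,1/2)$ rather than your shrinking annulus $(\eps,\sqrt{\eps})$, and separates the argument into two steps (first land in $\mV^\star_1(\Om)$ with the non-smooth $\tau_\eps$, then smooth), whereas you work directly with a mollified cutoff; these are cosmetic differences and the key estimates are the same.
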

\begin{proof}
We first show that any $w$ of $\mathscr{C}^\infty(\overline{\Om})$ can be approximated by a sequence of functions of $\mV^{\star}_1(\Om):=\{w\in \mV_1(\Om),\,\,w=0\mbox{ in a neighbourhood of }O\}$. To proceed, we work as in the proof of Lemma 1.2.2 of \cite{Claeys08}. For $\eps>0$, define the function $\tau_\eps$ such that 
\[
\tau_\eps(r)=\begin{array}{|ccl}
\displaystyle 0 & {\rm for} & r \leq \eps \\
\displaystyle 1-\frac{\log(2r)}{\log(2\eps)} & {\rm for} & \eps < r <1/2 \\
\displaystyle 1 & {\rm for} & r \geq 1/2.
\end{array}
\]

We have $\tau_\eps w \in \mV^{\star}_1(\Om)$ and 
\[
\begin{aligned}
\|w-\tau_\eps w\|_{\mV_1(\Om)} &\leq  \|1-\tau_\eps\|_{\mL^2(\Omega)} \|\nabla w\|_{\mL^\infty(\Omega)^2}+\|\nabla\tau_\eps\|_{\mL^2(\Omega)^2} \| w\|_{\mL^\infty(\Om)} \\[3pt]
& +\|\sqrt{x}(1-\tau_\eps)\|_{\mL^2(\Gamma_+)} \|\partial_x w\|_{\mL^\infty(\Gamma_+)}+\|\sqrt{x}\partial_x\tau_\eps\|_{\mL^2(\Gamma_+)} \| w\|_{\mL^\infty(\Gamma_+)} .
\end{aligned}
\]
For $\eps$ small enough, there holds
\[\|1-\tau_\eps\|^2_{\mL^2(\Omega)}=\pi \int_0^{1/2}(1-\tau_\eps)^2r\,dr=\pi \left(\int_0^\eps r\,dr + \int_\eps^{1/2}\frac{(\log(2r))^2}{(\log(2\eps))^2}r\,dr \right) \leq \frac{C}{|\log \eps|^2},\]
where $C$ is independent of $\eps$.
Similar 
explicit computations show that for $\eps$ small enough, we have
\[
\|1-\tau_\eps\|_{\mL^2(\Omega)}+\|\sqrt{x}\,(1-\tau_\eps)\|_{\mL^2(\Gamma_+)} \leq \frac{C}{|\log\eps|},\quad \|\nabla\tau_\eps\|_{\mL^2(\Omega)^2}+\|\sqrt{x}\,\partial_x\tau_\eps\|_{\mL^2(\Gamma_+)} \leq \frac{C}{\sqrt{|\log \eps|}},
\]
where $C$ is independent of $\eps$. This ensures that $(\tau_\eps w)$ converges to $w$ in $\mV_1(\Om)$ as $\eps$ goes to zero.\\
\newline
To conclude, take some $v\in \mV_1(\Om)$ and pick $\eps>0$. From Lemma \ref{infini}, there is $\varphi \in \mathscr{C}^\infty(\overline{\Omega})$ such that $\|v-\varphi\|_{\mV_1(\Om)} \leq \eps/3$. According to the first part of the proof, there exists some $v^\star \in \mV^{\star}_1(\Om)$ such that $\|\varphi-v^\star\|_{\mV_1(\Om)} \leq \eps/3$.
 On the other hand, it is clear that any function of $\mV^{\star}_1(\Om)$ can be approximated in $\mV_1(\Om)$ by a sequence of functions of $\mathscr{C}^\infty_0(\overline{\Omega}\setminus\{O\})$ (adapt the first step of the proof of Lemma \ref{infini}). 
Hence there exists $\psi \in \mathscr{C}^\infty_0(\overline{\Omega}\setminus\{O\})$ such that $\|v^\star-\psi\|_{\mV_1(\Om)} \leq \eps/3$.
Finally, we can write
\[\|v-\psi\|_{\mV_1(\Om)} \leq \|v-\varphi\|_{\mV_1(\Om)} + \|\varphi-v^\star\|_{\mV_1(\Om)} + \|v^\star-\psi\|_{\mV_1(\Om)} \leq \eps,\] 
which completes the proof
\end{proof}
We are now in a position to establish Theorem \ref{cas1}.
\begin{proof}[Proof of Theorem \ref{cas1}]
Let us work by contradiction. Assume that $A: \mV_1(\Om) \rightarrow \mV_1(\Om)$ is of Fredholm type. In this case, since it is selfadjoint, because it is symmetric and bounded, it is Fredholm of index zero. Then, define the operator $\tilde{A}: \mV_1(\Om) \rightarrow \mV_1(\Om)$ such that 
\[
(\tilde{A}u,v)_{\mV_1(\Om)}=a(u,v)+i\int_{\Omega} u\,\overline{v}\,dxdy,\qquad \forall u,v\in \mV_1(\Om),
\]
Using that $\mH^1(\Om)$ is compactly embedded in $\mL^2(\Om)$, one can shows that $\tilde{A}-A$ is compact. This guarantees that $\tilde{A}$ is Fredholm of index zero. Since $\tilde{A}$ is injective, we deduce that $\tilde{A}$ is an isomorphism. Therefore, there is a constant $C>0$ such that we have
\begin{equation}\label{ContinuityInverse}
\|u\|_{\mV_1(\Om)} \le C\,\|\tilde{A}u\|_{\mV_1(\Om)},\qquad\forall u\in \mV_1(\Om).
\end{equation}
Let us show that this is not true. For $n\in \N\setminus \{0\}$, define the function $\mathfrak{s}_n$ such that
\begin{equation}\label{defsn}
\mathfrak{s}_n(x,y)=r^{i\tau+1/n}\cos(i\tau(\theta-\pi))
\end{equation}
where $\tau$ stands for the positive root of Equation (\ref{EqnOscillating}). Due to the regularizing term $r^{1/n}$ in the definition of $\mathfrak{s}_n$, one can check that $\mathfrak{s}_n$ belongs to $\mV_1(\Om)$ for all $n\in\mathbb{N} \setminus \{0\}$. However we have 
\begin{equation}\label{calculus1}
\begin{aligned}
\|\mathfrak{s}_n\|^2_{\mV_1(\Om)} &\geq \int_{\Gamma_+} x|\partial_x \mathfrak{s}_n|^2\,dx \\
& = |\cos(i\tau\pi)|^2\,\left|i\tau+1/n\right|^2\,\int_0^{1} x^{2/n-1}\,dx \geq |\tau\cos(i\tau\pi)|^2\,\frac{n}{2}\underset{n \rightarrow +\infty}{\longrightarrow}+\infty.
\end{aligned}
\end{equation}
On the other hand, using Lemma \ref{infini0}, we can write
\begin{equation}\label{defNorm}
\|\tilde{A}\mathfrak{s}_n\|_{\mV_1(\Om)}=\underset{v\in \mathscr{C}^\infty_0(\overline{\Omega}\setminus\{O\})\setminus\{0\}}{\sup}\frac{|(\tilde{A}\mathfrak{s}_n,v)_{\mV_1(\Om)}|}{\|v\|_{\mV_1(\Om)}}.
\end{equation}
Let us work on the right hand side of (\ref{defNorm}). For $v\in \mathscr{C}^\infty_0(\overline{\Omega}\setminus\{O\})$, we have
\begin{equation}\label{Decompo}
\begin{array}{rcl}
(\tilde{A}\mathfrak{s}_n,v)_{\mV_1(\Om)}&\hspace{-0.2cm}=&\hspace{-0.2cm}\dsp\int_{\Omega} \nabla \mathfrak{s}_n \cdot \nabla \overline{v}\,dxdy-\int_{\Gamma_+}x\,\partial_x \mathfrak{s}_n\,\partial_x \overline{v}\,dx + (1+i) \int_{\Omega} \mathfrak{s}_n\overline{v}\,dxdy \\
&\hspace{-0.2cm}= &\hspace{-0.2cm}\dsp-\int_{\Omega} \Delta \mathfrak{s}_n \overline{v}\,dxdy +\int_{\partial\Om} \partial_\nu \mathfrak{s}_n \overline{v}\,dx + 
\int_{\Gamma_+} \partial_x(x\,\partial_x \mathfrak{s}_n)\overline{v}\,dx \\[6pt]
 & & -\partial_x \mathfrak{s}_n(P)\,\overline{v}(P)+ (1+i) \int_{\Omega} \mathfrak{s}_n\overline{v}\,dxdy 
\end{array}
\end{equation}
where $P:=(1,0)$. Let $\delta\in(0,1)$ be sufficiently small so that $\Om$ contains the half disk $D^+(O,\delta):=\{(x,y)\,|\,0<(x^2+y^2)^{1/2}<\delta,\,y>0\}$. Introduce some cut-off function $\zeta\in\mathscr{C}^{\infty}(\overline{\Om})$ which depends only on the radial coordinate such that $\zeta=1$ for $r\le\delta/2$ and $\zeta=0$ for $r\ge\delta$. Let us rewrite (\ref{Decompo}) as $(\tilde{A}\mathfrak{s}_n,v)_{\mV_1(\Om)}=I_1^n+I_2^n+I_3^n$ with 
\begin{equation}\label{DefTerms}
\begin{array}{rcl}
I_1^n &:= & -\int_{\Omega} (1-\zeta)\Delta \mathfrak{s}_n \overline{v}\,dxdy+\int_{\partial\Om} (1-\zeta)\partial_\nu \mathfrak{s}_n \overline{v}\,dx +\int_{\Gamma_+} (1-\zeta)\partial_x(x\,\partial_x \mathfrak{s}_n)\overline{v}\,dx\\[6pt]
& & -\partial_x \mathfrak{s}_n|_{\Gamma_+}(P)\,\overline{v}(P)+ (1+i) \int_{\Omega} \mathfrak{s}_n\overline{v}\,dxdy \\[12pt]
I_2^n &:=& \int_{\Gamma_+}(\partial_\nu \mathfrak{s}_n + \partial_x(x\,\partial_x \mathfrak{s}_n))\,\zeta\,\overline{v}\,dx\\[12pt]
I_3^n &:=& \dsp-\int_{\Omega} \Delta \mathfrak{s}_n (\zeta \overline{v})\,dxdy.
\end{array}
\end{equation}
Above we used that $\partial_\nu \mathfrak{s}_n=0$ on $\Gamma_-$ according to the definition (\ref{defsn}) of $\mathfrak{s}_n$. Let us study each of the terms of (\ref{DefTerms}) separately.\\
\newline
We start with $I_1^n$. Since $(\mathfrak{s}_n)$ is bounded in $\mL^{\infty}(\Omega)$, and so in $\mL^{2}(\Omega)$, and that its derivatives are also bounded in $\mL^{\infty}(\Omega\setminus D^+(O,\delta/2))$, we have
\begin{equation}\label{estim1}
|I_1^n| \le c\,\|v\|_{\mV_1(\Om)}
\end{equation}
where $c>0$ is a constant which may change from one line to another below but which remains independent of $n$. Now we deal with the term $I_2^n$ in (\ref{DefTerms}). On $\Gamma_+$, we have 
\[
\partial_\nu \mathfrak{s}_n + \partial_x(x\partial_x\mathfrak{s}_n)= \left(-i\tau\sin(i\tau\pi) + (i\tau+1/n)^2\cos(i\tau\pi)\right)x^{i\tau+1/n-1},
\]
which, using the fact that $\lambda=i\tau$ satisfies the relation (\ref{RelationDispersion}), yields
\[
\partial_\nu \mathfrak{s}_n + \partial_x(x\partial_x\mathfrak{s}_n)=\left(\frac{2i\tau}{n}+\frac{1}{n^2}\right)\cos(i\tau\pi)x^{i\tau+1/n-1}.
\]
Integrating by parts, we find
\[
\int_{\Gamma_+}(\partial_\nu \mathfrak{s}_n + \partial_x(x\,\partial_x \mathfrak{s}_n))\,\zeta\overline{v}\,dx=-\int_{0}^{1}\int_{0}^x(\partial_\nu \mathfrak{s}_n + \partial_t(t\,\partial_t\mathfrak{s}_n))\,dt\,\partial_x(\zeta\overline{v})\,dx.
\]
Writing
\[
\int_{0}^x(\partial_\nu \mathfrak{s}_n + \partial_t(x\,\partial_t\mathfrak{s}_n))\,dt\,\partial_x(\zeta\overline{v})=\frac{1}{i\tau+\frac{1}{n}}\left(\frac{2i\tau}{n}+\frac{1}{n^2}\right)\cos(i\tau\pi)x^{i\tau+1/n-1/2}\,x^{1/2}\partial_x(\zeta\overline{v})
\]
and using the Cauchy-Schwarz inequality,
we obtain
\[
\left|\int_{\Gamma_+}(\partial_\nu \mathfrak{s}_n + \partial_x(x\,\partial_x \mathfrak{s}_n))\,\zeta\overline{v}\,dx\right| \leq 
\left|\frac{1}{i\tau+\frac{1}{n}}\right| \left|\frac{2i\tau}{n}+\frac{1}{n^2} \right| |\cos(i\tau\pi)| \sqrt{\int_0^1 x^{2/n-1}\,dx}\sqrt{\int_0^1 x|\partial_x(\zeta\, v)|^2\,dx} ,
\]
which is enough to conclude that
\begin{equation}\label{estim2}
|I_2^n|=\left|\int_{\Gamma_+}(\partial_\nu \mathfrak{s}_n + \partial_x(x\,\partial_x\mathfrak{s}_n))\,\zeta\overline{v}\,dx\right| \le c\,\|v\|_{\mV_1(\Om)}.
\end{equation}
Finally we work on the term $I_3^n$ in (\ref{DefTerms}). Using that
\[\mathfrak{s}_n(x,y)=r^{1/n} \mathfrak{s}(x,y),\qquad \mathfrak{s}(x,y)=r^{i\tau}\cos(i\tau(\theta-\pi))\]
and the fact that $\Delta \mathfrak{s}=0$, we find
\[
\Delta \mathfrak{s}_n=\left(\frac{2i\tau}{n}+\frac{1}{n^2}\right)\cos(i\tau(\theta-\pi))r^{i\tau+1/n-2}.
\]
This allows us to write
\[
\begin{array}{rcl}
\dsp\int_{\Omega}\Delta \mathfrak{s}_n \,(\zeta\overline{v})\,dxdy&=&\dsp\int_{0}^\pi\int_{0}^{1} \left(\frac{2i\tau}{n}+\frac{1}{n^2}\right)\cos(i\tau(\theta-\pi))r^{i\tau+1/n-2} \,(\zeta\overline{v})\,rdrd\theta \\[10pt]
&=&-\dsp \left(\frac{2i\tau}{n}+\frac{1}{n^2}\right)\int_{0}^\pi \cos(i\tau(\theta-\pi)) \left(\int_{0}^{1} \frac{1}{i\tau+1/n}r^{i\tau+1/n} \,\partial_r(\zeta\overline{v})\,dr\right)d\theta \\
&=&-\dsp \frac{1}{i\tau+1/n} \left(\frac{2i\tau}{n}+\frac{1}{n^2}\right)\int_{0}^\pi\int_{0}^{1}r^{i\tau+1/n-1}\cos(i\tau(\theta-\pi)) \,\partial_r(\zeta\overline{v})\,rdrd\theta.
\end{array}
\]
Using again the Cauchy-Schwarz inequality in $\mL^2(D^+(O,1))$, we deduce
\begin{equation}\label{estim3}
|I_3^n|=\left|\dsp\int_{\Omega}\Delta \mathfrak{s}_n \,(\zeta\overline{v})\,dxdy\right| \leq c\,\|v\|_{\mV_1(\Om)}.
\end{equation}
Finally, gathering estimates  (\ref{estim1}), (\ref{estim2}), (\ref{estim3}) into (\ref{Decompo}), from (\ref{defNorm}), we obtain 
\begin{equation}\label{Prop2}
\|\tilde{A}\mathfrak{s}_n\|_{\mV_1(\Om)} \leq c.
\end{equation}
Properties (\ref{calculus1}) and (\ref{Prop2}) are in contradiction with (\ref{ContinuityInverse}). This ends the proof.
\end{proof}

\section{Problem with a vanishing and sign-changing impedance}\label{SectionVanishing}

The goal of the present section is to study the problem (\ref{faible bis}) which involves an impedance which is both vanishing at zero and whose sign is not constant. In Section \ref{SectionAlphaMoins1}, we showed that the boundary term in the problem (\ref{faible}), with an impedance which vanishes at zero but whose sign is constant, plays the role of the principal part and the volumic component is only a compact perturbation of it when $\alpha\in[0,1)$. Therefore, by considering here a sign-changing impedance, we strongly affect the principal part of the operator. For this reason, the question of the Fredholmness of $B$ compared to the one of $A$ is a priori not clear. We will see however that we have similar results for $A$ and $B$ when $\alpha\in[0,1]$.

\subsection{Proof of Theorem \ref{casgeneral_bis} -- case $\alpha\in[0,1)$}
In order to prove Theorem \ref{casgeneral_bis}, we will use, like in Proposition \ref{equivalence}, an equivalence between the formulation (\ref{faible bis}) in 2D and a variational problem in 1D. Set $I_+:=(0,1)$, $I_-:=(-1,0)$, $\mathbb{I}:=(-1,1)$ and define the two spaces 
\[
\mX_\alpha(I_\pm):=\{\phi_\pm \in \mL^2(I_\pm),\,|x|^{\alpha/2} d_x \phi_\pm \in \mL^2(I_\pm)\}.
\]	
We equip them with their natural inner products
\[(
\phi_\pm,\psi_\pm)_{\mX_\alpha(I_\pm)}=\int_{I_\pm} \phi_\pm\,\overline{\psi_\pm}\,dx + \int_{I_\pm}|x|^\alpha\, d_x \phi_\pm\, d_x \overline{\psi_\pm}\,dx.
\]
For $\alpha \in [0,1)$, we also define the space
\[
\mathcal{X}_\alpha(\mathbb{I}):=\{\Phi=(\phi_-,\phi_+) \in \mX_\alpha(I_-) \times \mX_\alpha(I_+),\, \phi_-(0)=\phi_+(0)\}.
\]
It is a Hilbert space when endowed with the inner product, for $\Phi=(\phi_-,\phi_+)$, $\Psi=(\psi_-,\psi_+)$,
\[
(\Phi,\Psi)_{\mathcal{X}_\alpha(\mathbb{I})}=(\phi_-,\psi_-)_{\mX_\alpha(I_-)} + (\phi_+,\psi_+)_{\mX_\alpha(I_+)}.
\]
Note that the definition of $\mathcal{X}_\alpha(\mathbb{I})$ relies on the result of Proposition \ref{prop-Holder} which guarantees that
$\mX_\alpha(I_\pm) \subset \mathscr{C}^0(\overline{I_\pm})$ as soon as $\alpha \in [0,1)$. Minor adaptations of the proof of Proposition \ref{density} allow one to show that $\mathscr{C}^\infty(\overline{\mathbb{I}})$ is dense in $\mathcal{X}_\alpha(\mathbb{I})$ for all $\alpha\in[0,1]$. 
\newline
To state the result of equivalence, we need to define a few objects. Recall that $\Gamma=(-1,1)\times\{0\}$. Denote by $\Theta$ the Dirichlet-to-Neumann operator such that
\begin{equation}\label{DTN_bis}
\begin{array}{rccl}
\Theta: &\mH^{1/2}(\Gamma) &\rightarrow &  \mH^{-1/2}(\Gamma) \\ 
&\Phi  &\mapsto&   \Theta\Phi=\partial_\nu u_{\Phi},
\end{array}
\end{equation}
where $u_{\Phi}$ is the unique element of $\mH^1(\Omega)$ satisfying
\be\label{uh_bis}
\begin{array}{|rccl}
		-\Delta u_{\Phi}+u_\Phi &=& 0&  \mbox{in } \Omega\\[2pt]
		\partial_\nu u_{\Phi} &=& 0  & \mbox{on } \partial\Om\setminus\overline{\Gamma}\\[2pt]
				u_{\Phi} &=&\Phi & \mbox{on } \Gamma.
	\end{array}
	\ee
Classically, one shows that $\Theta$ is continuous from 
$\mH^{1/2}(\Gamma)$ to $\mH^{-1/2}(\Gamma)$. Additionally, set $G:=-\partial_\nu \mathcal{U} \in \mH^{-1/2}(\Gamma)$ where $\mathcal{U}$ is the unique function of $\mH^1(\Omega)$ satisfying
\be\label{R}
\begin{array}{|rccl}
-\Delta \mathcal{U}+\mathcal{U} &=& f&  \mbox{in } \Omega\\[2pt]
\partial_\nu \mathcal{U} &=& 0  & \mbox{on } \partial\Om\setminus\overline{\Gamma}\\[2pt]
\mathcal{U} &=&0 & \mbox{on } \Gamma.
\end{array}
\ee

\begin{proposition}\label{equivalence_bis}
Assume that $\alpha \in [0,1)$. If the function $u \in \mW_\alpha(\Om)$ satisfies (\ref{faible bis}) then $\Phi=(\phi_-,\phi_+):=(u|_{\Gamma_-},u|_{\Gamma_+})$ solves the problem
\be \label{1d_bis}
\hspace{-0.3cm}\begin{array}{|l}
\mbox{Find }\Phi\in\mathcal{X}_\alpha(\mathbb{I})\mbox{ such that for all }\Psi=(\psi_-,\psi_+) \in\mathcal{X}_\alpha(\mathbb{I})\\[6pt]
-\int_{\Gamma_-}\hspace{-0.15cm} |x|^\alpha d_x \phi_-\,d_x \overline{\psi_-}\,dx+\int_{\Gamma_+} \hspace{-0.25cm}x^\alpha d_x \phi_+\,d_x \overline{\psi_+}\,dx+ \langle \Theta \Phi,\overline{\Psi}\rangle_{\mH^{-1/2}(\Gamma),\mH^{1/2}(\Gamma)} = \langle  G,\overline{\Psi} \rangle_{\mH^{-1/2}(\Gamma),\mH^{1/2}(\Gamma)}.
\end{array}\hspace{-0.9cm}
\ee
Conversely, if $\Phi\in\mathcal{X}_\alpha(\mathbb{I})$ satisfies (\ref{1d_bis}), then $u=u_\Phi+\mathcal{U} \in \mW_\alpha(\Om)$ solves (\ref{faible bis}). 
\end{proposition}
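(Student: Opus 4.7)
The plan is to mirror the proof of Proposition \ref{equivalence}, the only genuinely new ingredient being the matching condition $\phi_-(0)=\phi_+(0)$ encoded in the space $\mathcal{X}_\alpha(\mathbb{I})$.

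For the direct implication, take $u\in\mW_\alpha(\Om)$ solving (\ref{faible bis}) and set $\Phi:=(u|_{\Gamma_-},u|_{\Gamma_+})$. By definition of $\mW_\alpha(\Om)$, the components $\phi_\pm$ belong to $\mX_\alpha(I_\pm)$. The matching at the origin comes for free: Proposition \ref{prop-Holder} forces each $\phi_\pm$ to admit a H\"older continuous representative up to $0$ since $\alpha<1$, while $u|_\Gamma\in\mH^{1/2}(\Gamma)$ rules out a jump at the origin (a $\mH^{1/2}$ function on an interval cannot have a jump at an interior point). Hence $\Phi\in\mathcal{X}_\alpha(\mathbb{I})$. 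I would then test (\ref{faible bis}) against $v\in\mathscr{C}^\infty_0(\Om)$ to recover $-\Delta u+u=f$ in $\Om$, and against lifts of functions in $\tilde{\mH}^{1/2}(\Gamma_0)$ to obtain $\partial_\nu u=0$ on $\Gamma_0$. Consequently $u-\mathcal{U}$ solves (\ref{uh_bis}) with boundary datum $\Phi$, so it equals $u_\Phi$ and $\Theta\Phi=\partial_\nu u_\Phi$.

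Subtracting the weak formulation satisfied by $\mathcal{U}$ from (\ref{faible bis}) eliminates $f$, and, using the identity $\int_\Om \nabla u_\Phi\cdot\nabla\overline{v}+u_\Phi\overline{v}\,dxdy=\langle \Theta\Phi,\overline{v}|_\Gamma\rangle_{\mH^{-1/2}(\Gamma),\mH^{1/2}(\Gamma)}$, yields for every $v\in\mW_\alpha(\Om)$
\[
-\int_{\Gamma_-}|x|^\alpha\partial_x u\,\partial_x\overline{v}\,dx+\int_{\Gamma_+}x^\alpha\partial_x u\,\partial_x\overline{v}\,dx+\langle\Theta\Phi,\overline{v}|_\Gamma\rangle_{\mH^{-1/2}(\Gamma),\mH^{1/2}(\Gamma)}=\langle G,\overline{v}|_\Gamma\rangle_{\mH^{-1/2}(\Gamma),\mH^{1/2}(\Gamma)}.
\]
To recover (\ref{1d_bis}), pick an arbitrary $\Psi\in\mathscr{C}^\infty(\overline{\mathbb{I}})\cap\mathcal{X}_\alpha(\mathbb{I})$, glue its two pieces to a smooth function on $\Gamma$ (the condition $\psi_-(0)=\psi_+(0)$ makes this well defined), extend it to a smooth function on $\partial\Om$ and lift it to some $v\in\mH^1(\Om)\subset\mW_\alpha(\Om)$. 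Inserting this $v$ in the identity above gives the desired variational equation for such smooth $\Psi$; the density of $\mathscr{C}^\infty(\overline{\mathbb{I}})$ in $\mathcal{X}_\alpha(\mathbb{I})$, obtained by a straightforward adaptation of Proposition \ref{density} (already pointed out in the excerpt), extends it to all $\Psi\in\mathcal{X}_\alpha(\mathbb{I})$.

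For the reverse implication, given $\Phi=(\phi_-,\phi_+)\in\mathcal{X}_\alpha(\mathbb{I})$ satisfying (\ref{1d_bis}), set $u:=u_\Phi+\mathcal{U}$. Then $u\in\mH^1(\Om)$ and $u|_{\Gamma_\pm}=\phi_\pm\in\mX_\alpha(I_\pm)$, hence $u\in\mW_\alpha(\Om)$. For any $v\in\mW_\alpha(\Om)$, its trace $(v|_{\Gamma_-},v|_{\Gamma_+})$ belongs to $\mathcal{X}_\alpha(\mathbb{I})$ by the same matching argument as before; inserting this trace in (\ref{1d_bis}) and recombining with the weak formulations satisfied by $u_\Phi$ and $\mathcal{U}$, together with $\mathcal{U}|_\Gamma=0$ (which kills the boundary derivative terms involving $\mathcal{U}$), reassembles (\ref{faible bis}). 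The hard part of the whole argument is the clean handling of the matching condition and of density, both of which rest in an essential way on the restriction $\alpha<1$ that guarantees continuity up to the origin on each half-interval; once this is in place, the remainder is careful bookkeeping parallel to Proposition \ref{equivalence}.
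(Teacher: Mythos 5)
Your proposal is correct and follows essentially the same route as the paper's own proof: the decomposition $u=u_\Phi+\mathcal{U}$, the identification of the matching condition $\phi_-(0)=\phi_+(0)$ via the H\"older continuity from Proposition \ref{prop-Holder} combined with the absence of jumps for $\mH^{1/2}$ traces, and the passage from traces of $\mW_\alpha(\Om)$ test functions to arbitrary $\Psi\in\mathcal{X}_\alpha(\mathbb{I})$ by lifting smooth data and invoking the density of $\mathscr{C}^\infty(\overline{\mathbb{I}})$ in $\mathcal{X}_\alpha(\mathbb{I})$. The converse direction, including the observation that $\mathcal{U}|_\Gamma=0$ kills the tangential derivative terms involving $\mathcal{U}$, also matches the paper's argument.
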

\begin{proof}
The proof is very similar to the one of Proposition \ref{equivalence}. However we detail it for the sake of completeness. Assume first that $u \in \mW_\alpha(\Om)$ solves (\ref{faible bis}). Then $\Phi=(\phi_-,\phi_+):=(u|_{\Gamma_-},u|_{\Gamma_+})$ belongs to $\mathcal{X}_\alpha(\mathbb{I})$. Indeed, then clearly there holds $(\phi_-,\phi_+) \in \mX_\alpha(I_-) \times \mX_\alpha(I_+)$. Additionally, from Proposition \ref{prop-Holder}, we know that $\phi_{\pm}\in\mathscr{C}(\overline{I_\pm})$. Since $u|_{\partial\Om}\in\mH^{1/2}(\partial\Om)$, we must have $\phi_-(0)=\phi_+(0)$ (as mentioned in \cite[Chap.\,33]{tartar}). This can be shown by using the characterization of the space $\mH^{1/2}(\partial\Om)$ with the help of the double integral on $\partial\Om$ 
(see for example \cite{LiMa68}). On the other hand, for all $v\in\mW_\alpha(\Om)$, there holds
\[
\int_\Omega \nabla u\cdot \nabla \overline{v}+u\overline{v}\,dxdy  -\int_{\Gamma_-}|x|^\alpha\, \partial_x u\, \partial_x \overline{v}\,dx+ \int_{\Gamma_+}x^\alpha\, \partial_x u\, \partial_x \overline{v}\,dx=\int_\Omega f\overline{v}\,dxdy.
\]
The function $\mathcal{U}\in\mH^1(\Om)$ defined as the solution of (\ref{R}), satisfies, for all $v\in\mW_\alpha(\Om)\subset\mH^1(\Om)$, 
\[
\int_\Omega \nabla \mathcal{U}\cdot \nabla \overline{v}+\mathcal{U}\overline{v}\,dxdy- \langle \partial_\nu \mathcal{U} ,\overline{v}\rangle_{\mH^{-1/2}(\Gamma),\mH^{1/2}(\Gamma)}=\int_\Omega f\,\overline{v}\,dxdy,\qquad \forall v \in \mW_\alpha(\Om).   \]
By taking the difference of the two above identities, we obtain, for all $ v \in \mW_\alpha(\Om)$, 
\begin{equation}\label{estim_bis_1}
\int_\Omega \nabla (u-\mathcal{U})\cdot \nabla \overline{v}+(u-\mathcal{U})\overline{v}\,dxdy - \int_{\Gamma_-}|x|^\alpha\, \partial_x u\, \partial_x \overline{v}\,dx+\int_{\Gamma_+}x^\alpha\, \partial_x u\, \partial_x \overline{v}\,dx= \langle G,\overline{v}\rangle_{\mH^{-1/2}(\Gamma),\mH^{1/2}(\Gamma)}.
\end{equation}
Besides, by working as in (\ref{EqnVol}), one finds that $u$ satisfies $-\Delta u+u=f$ in $\Om$ and $\partial_\nu u=0$ on $\partial\Om\setminus\overline{\Gamma}$. As a result, we get that
the function $u-U \in \mH^1(\Omega)$ solves (\ref{uh_bis}) with $\Phi=(\phi_-,\phi_+)=(u|_{\Gamma_-},u|_{\Gamma_+})$. We infer that for all $v \in \mW_\alpha(\Om)$, we have
\begin{equation}\label{estim_bis_2}
\int_\Omega \nabla (u-\mathcal{U})\cdot \nabla \overline{v}+(u-\mathcal{U})\overline{v}\,dxdy=\langle \Theta\Phi,\overline{v}\rangle_{\mH^{-1/2}(\Gamma),\mH^{1/2}(\Gamma)}.
\end{equation}
Gathering (\ref{estim_bis_1}) and (\ref{estim_bis_2}), we get, for all $v \in \mW_\alpha(\Om)$,
\begin{equation}\label{estim_bis_3}
-\int_{\Gamma_-}|x|^\alpha\, d_x \phi_-\, d_x \overline{v}\,dx+ \int_{\Gamma_+}x^\alpha\, d_x \phi_+\, d_x \overline{v}\,dx + \langle \Theta\Phi,\overline{v}\rangle_{\mH^{-1/2}(\Gamma),\mH^{1/2}(\Gamma)} = \langle G,\overline{v}\rangle_{\mH^{-1/2}(\Gamma),\mH^{1/2}(\Gamma)}.
\end{equation}
Next, pick some $\Psi \in\mathscr{C}^\infty(\overline{\Gamma})$, extend it to $\partial\Om\setminus\overline{\Gamma}$ to create an element $\tilde{\Psi}\in\mH^{1/2}(\partial\Om)$ and consider some function $v\in\mH^1(\Om)$ such that $v|_{\partial\Om}=\tilde{\Psi}$. Obviously such a $v$ belongs to $\mW_\alpha(\Om)$. Inserting it in (\ref{estim_bis_3}) gives
\[
-\int_{\Gamma_-}|x|^\alpha\, d_x \phi_-\, d_x \overline{\psi_-}\,dx+ \int_{\Gamma_+}x^\alpha\, d_x \phi_+\, d_x \overline{\psi_+}\,dx + \langle \Theta\Phi,\overline{\Psi}\rangle_{\mH^{-1/2}(\Gamma),\mH^{1/2}(\Gamma)} = \langle G,\overline{\Psi}\rangle_{\mH^{-1/2}(\Gamma),\mH^{1/2}(\Gamma)}.
\]
Since this is true for all $\Psi \in\mathscr{C}^\infty(\overline{\Gamma})$, using the density of $\mathscr{C}^\infty(\overline{\Gamma})$ in $\mathcal{X}_\alpha(\mathbb{I})$, we conclude that $\Phi=(u|_{\Gamma_-},u|_{\Gamma_+})$ solves (\ref{1d_bis}). This ends the first part of the proof.\\
\newline
Now, assume that $\Phi\in \mathcal{X}_\alpha(\mathbb{I})$ satisfies (\ref{1d_bis}). Denote respectively by $u_\Phi$, $\mathcal{U}$ the solutions of (\ref{uh_bis}), (\ref{R}). For all $v \in \mW_\alpha(\Om)$, $(v|_{\Gamma_-},v|_{\Gamma_+})$ belongs to $\mathcal{X}_\alpha(\mathbb{I})$. Therefore, for all $v \in \mW_\alpha(\Om)$, we have
\[
-\int_{\Gamma_-}|x|^\alpha\, \partial_x u_\Phi\, \partial_x \overline{v}\,dx+\int_{\Gamma_+}x^\alpha\, \partial_x u_\Phi\, \partial_x \overline{v}\,dx + \langle \partial_\nu u_\Phi,\overline{v} \rangle_{\mH^{-1/2}(\Gamma),\mH^{1/2}(\Gamma)} =-\langle \partial_\nu \mathcal{U},\overline{v} \rangle_{\mH^{-1/2}(\Gamma),\mH^{1/2}(\Gamma)}.
\]
Using that $\mathcal{U}=0$ on $\Gamma$ implies $\partial_x\mathcal{U}=0$ on $\Gamma$, we get, for all $v \in \mW_\alpha(\Om)$,
\[
-\int_{\Gamma_-}|x|^\alpha\, \partial_x (u_\Phi+\mathcal{U})\, \partial_x \overline{v}\,dx+\int_{\Gamma_+}x^\alpha\, \partial_x (u_\Phi+\mathcal{U})\, \partial_x \overline{v}\,dx 
 + \langle \partial_\nu (u_\Phi+\mathcal{U}),\overline{v} \rangle_{\mH^{-1/2}(\partial \Omega),\mH^{1/2}(\partial \Omega)} =0.
\]
Finally, since there holds,
\[\langle \partial_\nu (u_\Phi+\mathcal{U}),\overline{v} \rangle_{\mH^{-1/2}(\partial \Omega),\mH^{1/2}(\partial \Omega)}= \int_\Omega \nabla (u_\Phi+\mathcal{U})\cdot \nabla \overline{v}+(u_\Phi+\mathcal{U})\overline{v}\,dxdy -\int_\Omega f\,\overline{v}\,dxdy,\]
we deduce that $u=u_\Phi+\mathcal{U} \in \mW_\alpha(\Om)$ satisfies Problem (\ref{faible bis}).
\end{proof}
We are now able to prove Theorem \ref{casgeneral_bis}.
\begin{proof}[Proof of Theorem \ref{casgeneral_bis}]
Due to the change of sign of the impedance $x\mapsto \mrm{sign}(x)|x|^\alpha$, we can not directly apply the Lax-Milgram theorem to prove that the 1D variational problem (\ref{1d_bis}) is well-posed. Instead we will adapt the T-coercivity approach presented in \cite{BoCZ10,ChCi11}. Let us define the operator 
\[ 
\begin{array}{rrcl}
\mathbb{T}:& \mathcal{X}_\alpha(\mathbb{I}) &\rightarrow & \mathcal{X}_\alpha(\mathbb{I}) \\ 
&\Phi=(\phi_-,\phi_+)  &\mapsto&   (-\phi_-+2\phi_+(0),\phi_+).
\end{array}
\]
Let us comment a bit on this choice. The ``$-\phi_-$'' in the definition of $\mathbb{T}\Phi$ on $I_-$ will allow us to recover some positivity. However, to ensure that $\mathbb{T}\Phi$ belongs to $\mathcal{X}_\alpha(\mathbb{I})$, we have to compensate for the jump of trace at zero. This explains the presence of the ``$+2\phi_+(0)$''. Using in particular Proposition \ref{prop-Holder}, we can show that $\mathbb{T}$ is continuous. Additionally, we have $\mathbb{T} \circ \mathbb{T}=\mrm{Id}$, where $\mrm{Id} : \mathcal{X}_\alpha(\mathbb{I}) \rightarrow \mathcal{X}_\alpha(\mathbb{I})$ stands for the identity of $\mathcal{X}_\alpha(\mathbb{I})$. This guarantees that $\mathbb{T}$ is an isomorphism. Now, with the Riesz representation theorem, define the continuous operators $\mathsf{S},\mathsf{K}: \mathcal{X}_\alpha(\mathbb{I}) \rightarrow \mathcal{X}_\alpha(\mathbb{I})$ such that for all $\Phi,\Psi \in \mathcal{X}_\alpha(\mathbb{I})$,
\[
(\mathsf{S}\Phi,\Psi)_{\mathcal{X}_\alpha(\mathbb{I})}= -\int_{I_-} |x|^\alpha d_x \phi_-\,d_x \overline{\psi_-}\,dx + \int_{I_+} x^\alpha d_x \phi_+\,d_x \overline{\psi_+}\,dx+ \int_{\mathbb{I}} \mathbb{T}\Phi\,\overline{\Psi}\,dx\]
\[
(\mathsf{K}\Phi,\Psi)_{\mathcal{X}_\alpha(\mathbb{I})}= - \int_{\mathbb{I}} \mathbb{T}\Phi\,\overline{\Psi}\,dx+\langle \Theta\Phi,\overline{\Psi}\rangle_{\mH^{-1/2}(\Gamma),\mH^{1/2}(\Gamma)}.
\]
We have
\[
((\mathsf{S}\circ\mathbb{T})\Phi,\Psi)_{\mathcal{X}_\alpha(\mathbb{I})} =(\Phi,\Psi)_{\mathcal{X}_\alpha(\mathbb{I})},\qquad \forall \Phi,\Psi \in \mathcal{X}_\alpha(\mathbb{I}).
\]
This proves that $\mathsf{S}: \mathcal{X}_\alpha(\mathbb{I}) \rightarrow \mathcal{X}_\alpha(\mathbb{I})$ is an isomorphism whose inverse is $\mathbb{T}$. On the other hand, exploiting that $\mathbb{T}: \mathcal{X}_\alpha(\mathbb{I}) \rightarrow \mathcal{X}_\alpha(\mathbb{I})$ is continuous, that $\Theta:\mH^{1/2}(\Gamma)\to\mH^{-1/2}(\Gamma)$ is continuous and that $\mathcal{X}_\alpha(\mathbb{I})$ is compactly embedded in $\mH^{1/2}(\Gamma)$ (consequence of Proposition \ref{prop-compacite}), we can show that $\mathsf{K}: \mathcal{X}_\alpha(\mathbb{I}) \rightarrow \mathcal{X}_\alpha(\mathbb{I})$ is a compact operator. We deduce that $\mathsf{S}+\mathsf{K}$ is of Fredholm type. Since it is symmetric, it is of index zero. In particular, if only the null function solves (\ref{1d_bis}) with $G\equiv0$, then it admits a solution. The equivalence between
problems (\ref{faible bis}) and (\ref{1d_bis}) given by Proposition \ref{equivalence_bis} completes the proof. 
\end{proof}

\subsection{Proof of Theorem \ref{cas1_bis} -- case $\alpha=1$}

In the previous paragraph, we considered the problem (\ref{faible bis}) for $\alpha\in[0,1)$. Here we address the case $\alpha=1$. In that situation, (\ref{faible bis}) simply writes
\be\label{faible_1_bis}
\begin{array}{|l}
\mbox{Find }u\in\mW_1(\Om)\mbox{ such that for all }v \in \mW_1(\Om)\\[4pt]
\int_\Omega \nabla u\cdot \nabla \overline{v}+u\overline{v}\,dxdy + \int_{\Gamma}x\, \partial_x u\, \partial_x \overline{v}\,dx=\int_\Omega f\overline{v}\,dxdy.
\end{array}	
\ee	
If $u$ solves (\ref{faible_1_bis}), then $u$
satisfies the strong problem:
\be\label{fort_1_bis}
\begin{array}{|rccl}
-\Delta u+u &=& f&  \mbox{in } \Omega\\
\partial_\nu u &=& \partial_x (x\, \partial_x u) & \mbox{on } \Gamma \\
\partial_\nu		u &=& 0&  \mbox{on } \Gamma_0.
\end{array}
\ee
Similarly to what has been done in Section \ref{SectionWithSingus}, let us compute the singularities associated to the principal part of   (\ref{fort_1_bis}) at the origin and which are of the form  $\mathfrak{s}(x,y)=r^\lambda\varphi(\theta)$. This times, we find that $(\lambda,\varphi)$ must satisfy the spectral problem 
\be \label{system_bis} 
	\begin{array}{|rccl}
		d^2_{\theta\theta}\varphi(\theta)+\lambda^2\varphi(\theta) &=& 0&  \mbox{in } (0,\pi)\\
		-d_\theta \varphi(0) &=& \lambda^2\varphi(0) & \\
		-d_\theta\varphi(\pi)&=& \lambda^2\varphi(\pi). & 
	\end{array}
	\ee
For $\lambda \neq 0$, let us look for $\varphi$ in the form $\varphi(\theta)=A\,\cos(\lambda\theta)+B\,\sin(\lambda\theta)$ where $A$, $B$ are some constants. The second relation of (\ref{system_bis}) gives $-B=\lambda A$ and so $\varphi(\theta)=A\,(\cos(\lambda\theta)-\lambda\sin(\lambda\theta))$. From the third equation of (\ref{system_bis}), we find that there is a non zero solution if and only if $\lambda \neq 0$ satisfies
\[
\sin(\lambda \pi)+\lambda\cos(\lambda\pi)=\lambda\,(\cos(\lambda \pi)-\lambda\sin(\lambda\pi))\qquad\Longleftrightarrow\qquad (1+\lambda^2)\sin(\lambda\pi)=0.
\]
Thus, among the singular exponents, we find the values $\lambda=\pm i$.
Then mimicking the proof of Theorem \ref{cas1} with $\tau$ replaced by $1$, we can then establish Theorem \ref{cas1_bis} by working by contradiction. 
In the process, we need the density of $\mathscr{C}^\infty(\overline{\Omega})$ in $\mW_\alpha(\Om)$ for $\alpha \in [0,1]$ and 
more precisely the density of $\mathscr{C}^\infty_0(\overline{\Omega}\setminus\{O\})$ in $\mW_1(\Om)$. The proofs of these results follow the same steps as the ones of Lemma \ref{infini} and Lemma \ref{infini0} respectively.

\section{Relationship between the strong and weak formulations}\label{SectionWeakStrong}

In this section, we discuss the equivalence between strong and weak formulations. As we will see, this is not an obvious point, the reason being that it is not clear in which sense the boundary conditions in the strong problems should be imposed. We start by showing the simple result of Theorem \ref{faible_fort}.\\
\newline
\textit{Proof of Theorem \ref{faible_fort}.}
	Assume that $u \in \mV_\alpha(\Om)$ satisfies (\ref{faible}). Choosing first $v\in \mathscr{C}^\infty_0(\Omega)$ in the variational formulation, we get $-\Delta u+u=f$ in $\mathcal{D}'(\Omega)$. Then working as after (\ref{EqnVol}), we obtain $\partial_\nu u=0$ on $\partial\Om\setminus\overline{\Gamma_+}$. Finally, let us choose $v \in \mV_\alpha(\Om)$ in (\ref{faible}).
Using the Green formula
\be\label{IPP}\int_\Omega \nabla u\cdot \nabla \overline{v}\,dxdy=-\int_\Omega \Delta u\,\overline{v}\,dxdy + \langle\partial_\nu u,\overline{v} \rangle_{\mH^{-1/2}(\partial \Omega),\mH^{1/2}(\partial \Omega)},\ee
we find 
\begin{equation}\label{DtN1D2D}
\langle\partial_\nu u,\overline{v} \rangle_{\mH^{-1/2}(\Gamma_+),\mH^{1/2}(\Gamma_+)} + s\int_{\Gamma_+}x^\alpha\, \partial_x u\, \partial_x \overline{v}\,dx=0.
\end{equation}
Now, consider some $\varphi\in\mathscr{C}^{\infty}_0(\Gamma_+)$, extend it by zero to $\partial\Om$ and introduce some function $v\in\mH^1(\Om)$ such that $v|_{\partial\Om}=\varphi$. Then clearly $v$ belongs to $\mV_\alpha(\Om)$. Inserting it into (\ref{DtN1D2D}) gives
\[
\langle\partial_\nu u,\overline{\varphi} \rangle_{\mH^{-1/2}(\Gamma_+),\mH^{1/2}(\Gamma_+)} + s\int_{\Gamma_+}x^\alpha\, \partial_x u\, \partial_x \overline{\varphi}\,dx=0.
\]
Since this is true for all $\varphi\in\mathscr{C}^{\infty}_0(\Gamma_+)$, we obtain $\partial_\nu u=s \partial_x(x^\alpha\partial_x u)$ in the distributional sense on $\Gamma_+$. This shows that $u$ solves (\ref{fort}).\hfill \qed \\
\newline
We establish similarly the result of Theorem \ref{faible_fort_bis} and for this reason, we omit the proof.\\
\newline
At this stage, it is natural to wonder if the converses of Theorems \ref{faible_fort}, \ref{faible_fort_bis} are true. Namely, do solutions of (\ref{fort}), (\ref{fort_bis}) satisfy (\ref{faible}), (\ref{faible bis}) respectively? It turns out that the answer to this question is no in general because additional conditions are needed at $\partial \Gamma_+$. In other words, it is necessary to enrich the strong problems. To set ideas, let us consider (\ref{fort}). We will assume that $\partial\Om$ also contains the flat segment $\tilde{\Gamma}_+:=(-\eta,1+\eta)\times\{0\}$ for a certain $\eta>0$, and assume that the GIBC is imposed not only in $\mathcal{D}'(\Gamma_+)$ but also in $\mathcal{D}'(\tilde{\Gamma}_+)$. Nevertheless, this is still not satisfactory because at the point $P=(1,0)$, the function $x\mapsto x^{\alpha}\mathds{1}_{\Gamma_+}(x)$ has a jump which prevents from defining $\mathds{1}_{\Gamma_+}\partial_xu$ as an element of $\mathcal{D}'(\tilde{\Gamma}_+)$. This drawback comes from the fact that to simplify the presentation, we have decided to work on problems  (\ref{faible}) and (\ref{faible bis}) which are rather academic. A more natural one is 
\be\label{faible_ter}
\begin{array}{|l}
\mbox{Find }u\in\tilde{\mV}_\alpha(\Om)\mbox{ such that for all }v \in \tilde{\mV}_\alpha(\Om)\\[4pt]
\int_\Omega \nabla u\cdot \nabla \overline{v}+u\overline{v}\,dxdy + s\int_{\Gamma_+}x^\alpha(1-x)^\alpha\, \partial_x u\, \partial_x \overline{v}\,dx=\int_\Omega f\overline{v}\,dxdy
\end{array}	
\ee 
with 
\[
\tilde{\mV}_\alpha(\Om):=\{v \in \mH^1(\Omega)\,|\,x^{\alpha/2}(1-x)^{\alpha/2} \partial_x v \in \mL^2(\Gamma_+)\}.
\]		
It is linked to the strong formulation
\be \label{fort_ter} 
\begin{array}{|rccl}
-\Delta u + u &=& f&  \mbox{in } \Omega\\[3pt]
\partial_\nu u&=&0 & \mbox{on } \partial\Om\setminus\overline{\tilde{\Gamma}_+}\\[3pt]
\partial_\nu u &=& s\,\partial_x (\mathds{1}_{\Gamma_+}(x)\,x^\alpha(1-x)^\alpha\, \partial_x u) & \mbox{on } \tilde{\Gamma}_+,
\end{array}	
\ee
where the GIBC in (\ref{fort_ter}) is written in $\mathcal{D}'(\tilde{\Gamma}_+)$. Before proceeding, let us justify that this GIBC is well-defined in $\mathcal{D}'(\tilde{\Gamma}_+)$. This will be a consequence of the two following lemmas.\\
\newline
For $\alpha \geq 0$, define $g_\alpha$ such that $g_\alpha(x)=\mathds{1}_{\Gamma_+}(x)\,x^\alpha(1-x)^\alpha$.
\begin{lemma}\label{galpha}
The function $g_\alpha$ belongs to $\mH^{1/2}(\tilde{\Gamma}_+)$ if and only if $\alpha>0$.
 \end{lemma}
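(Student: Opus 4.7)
The strategy is to treat the two implications separately: since $\alpha\ge 0$, the ``only if'' part amounts to ruling out $\alpha = 0$, while the ``if'' part requires constructing a suitable regularization for every $\alpha > 0$.

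For $\alpha=0$, we have $g_0 = \mathds{1}_{(0,1)}$, the classical example of a jump function excluded from $\mH^{1/2}$. I would verify this by a direct Gagliardo seminorm computation restricted to the rectangle $(-\eta,0)\times(0,1)$: on this set the integrand equals $(x-y)^{-2}$, the inner integration in $x$ produces $1/|y|-1/(1+|y|)$, and the leading singular term $1/|y|$ is not integrable as $y\to 0^-$. This yields a logarithmic divergence, so $g_0\notin \mH^{1/2}(\tilde{\Gamma}_+)$.

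For $\alpha>0$, I would argue by a 2D lifting in the spirit of the proof of Proposition \ref{prop-H12}. Using a smooth partition of unity on $\tilde{\Gamma}_+$, decompose $g_\alpha=\chi_0 g_\alpha+\chi_1 g_\alpha+\chi_{\mathrm{int}} g_\alpha$, with $\chi_0$ supported near $0$, $\chi_1$ near $1$, and $\chi_{\mathrm{int}}$ compactly supported in the interior of $(0,1)$. The interior piece is smooth and compactly supported, hence in $\mH^{1/2}$; by the symmetry $g_\alpha(x)=g_\alpha(1-x)$ the $\chi_1$-piece is handled analogously, so we may focus on $\chi_0 g_\alpha$. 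Writing $\chi_0(x)g_\alpha(x) = x_+^\alpha\,\psi(x)$ with $\psi(x):=\chi_0(x)(1-x)^\alpha$ smooth in a neighbourhood of $0$, and using that multiplication by a smooth function preserves $\mH^{1/2}$, it suffices to show that $x \mapsto x_+^\alpha \chi_0(x)$ belongs to $\mH^{1/2}$. In the half-disk $D^+ := \{(x,y) : x^2+y^2<1,\,y>0\}$, I would use the lifting
$$u(r,\theta) := r^\alpha \cos^2(\theta/2)\,\chi_0(r),$$
whose trace on $\{y=0\}$ equals $x^\alpha\chi_0(x)$ for $x>0$ and vanishes for $x<0$, since $\cos^2(0/2)=1$ and $\cos^2(\pi/2)=0$ handle the two sides of the origin separately. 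A direct polar-coordinate computation gives
$$\int_{D^+}|\nabla u|^2\,dxdy \le C\int_0^1 r^{2\alpha-1}\,dr,$$
which is finite if and only if $\alpha>0$. Hence $u\in\mH^1(D^+)$, and the trace theorem from $\mH^1(D^+)$ to $\mH^{1/2}(\partial D^+)$ concludes.

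The delicate point is the ``if'' direction. A purely radial lifting $u(x,y)=\varphi(r)$ as in the proof of Proposition \ref{prop-H12} produces an even extension in $x$ and therefore cannot reproduce the vanishing of $g_\alpha$ on $\{x<0\}$ imposed by the indicator $\mathds{1}_{\Gamma_+}$. The angular weight $\cos^2(\theta/2)$ is the key ingredient, switching $u$ off on the negative $x$-axis while preserving its behaviour on the positive one, and the critical threshold $\alpha=0$ emerges naturally from the $\mH^1$-integrability of $\partial_r u \sim r^{\alpha-1}$.
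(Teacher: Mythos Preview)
Your proof is correct. Both your argument and the paper's rest on the same core idea: for $\alpha>0$, lift $x_+^\alpha$ to an $\mH^1$ function on a 2D domain via $r^\alpha$ times an angular factor that equals $1$ at $\theta=0$ and vanishes at $\theta=\pi$ (you use $\cos^2(\theta/2)$, the paper uses $\cos(\theta/2)$), then invoke the trace theorem. The organizational difference lies in how the two endpoints $0$ and $1$ are combined. You localize via a smooth partition of unity and reduce by the symmetry $x\leftrightarrow 1-x$ to a single endpoint. The paper instead builds two global liftings $u(x,y)=r^\alpha\cos(\theta/2)$ and $v(x,y)=u(1-x,y)$ on the rectangle $(-\eta,1+\eta)\times(0,1)$, notes that each is smooth away from its own singular point, and concludes that the product $uv$ lies in $\mH^1$ with trace exactly $g_\alpha$. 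The product trick is more concise; your partition-of-unity route is more routine but sidesteps the (easy) check that $uv\in\mH^1$. For $\alpha=0$, the paper simply cites the standard fact that jump functions fail to be in $\mH^{1/2}$, whereas you spell out the Gagliardo computation (with a harmless slip: the inner integral gives $1/y-1/(y+\eta)$ rather than $1/|y|-1/(1+|y|)$, but the $1/y$ term still forces the logarithmic divergence).
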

\begin{proof}
Assume that $\alpha>0$. Define the rectangle $\mathcal{R}:=(-\eta,1+\eta)\times(0,1)$ and introduce $u$ such that $u(x,y)=r^\alpha\cos(\theta/2)$. We observe that $u \in \mH^1(\mathcal{R})$. Indeed, we have
\[
\|u\|_{\mH^1(\mathcal{R})}^2 \le \int_0^\pi\int_0^{1+\eta}\left(|u|^2+ |\partial_r u|^2+r^{-2}|\partial_\theta u|^2\right)r\,drd\theta<+\infty.
\]
Define also $v$ such that $v(x,y)=u(1-x,y)$. Clearly there also holds $v\in\mH^1(\mathcal{R})$. Additionally, we remark that $u\in\mathscr{C}^\infty(\overline{\Om}\setminus\{O\})$ and $v\in\mathscr{C}^\infty(\overline{\Om}\setminus\{P\})$. From this, we deduce that $uv$ belongs to $\mH^1(\mathcal{R})$. By continuity of the trace mapping from $\mH^1(\mathcal{R})$ to $\mH^{1/2}(\tilde{\Gamma}_+)$, since $(uv)|_{\tilde{\Gamma}_+}=g_\alpha$, we infer that $g_\alpha$ belongs to $\mH^{1/2}(\tilde{\Gamma}_+)$.\\
When $\alpha=0$, it is clear that the discontinuous function $g_0$ does not belong to $\mH^{1/2}(\tilde{\Gamma}_+)$ (see again \cite[Chap.\,33]{tartar}). 
\end{proof}
\begin{lemma}
\label{galphav}
For $\alpha>0$ and $v \in \tilde{\mH}^{-1/2}(\tilde{\Gamma}_+)$, $g_\alpha v$ belongs to $\mathcal{D}'(\tilde{\Gamma}_+)$.
\end{lemma}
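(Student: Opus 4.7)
The plan is to define $g_\alpha v$ as an element of $\mathcal{D}'(\tilde{\Gamma}_+)$ through the natural duality formula
$$\langle g_\alpha v,\varphi\rangle_{\mathcal{D}'(\tilde{\Gamma}_+),\mathcal{D}(\tilde{\Gamma}_+)}:=\langle v,g_\alpha\varphi\rangle_{\tilde{\mH}^{-1/2}(\tilde{\Gamma}_+),\tilde{\mH}^{1/2}(\tilde{\Gamma}_+)},\qquad\varphi\in\mathscr{C}_0^\infty(\tilde{\Gamma}_+).$$
The whole lemma thus reduces to showing that the right-hand side is well defined, i.e.\ that $g_\alpha\varphi$ belongs to $\tilde{\mH}^{1/2}(\tilde{\Gamma}_+)$ for every such $\varphi$, with the resulting linear form continuous on $\mathcal{D}(\tilde{\Gamma}_+)$.

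The first step I would take is to exploit the compact support of $\varphi$ inside the open interval $\tilde{\Gamma}_+$: since there is a positive distance between $\supp(\varphi)$ and $\partial\tilde{\Gamma}_+$, once $g_\alpha\varphi$ is known to lie in $\mH^{1/2}(\tilde{\Gamma}_+)$, extending it by zero outside $\tilde{\Gamma}_+$ automatically produces an element of $\mH^{1/2}(\partial\Om)$ supported in $\overline{\tilde{\Gamma}_+}$, hence of $\tilde{\mH}^{1/2}(\tilde{\Gamma}_+)$. For the $\mH^{1/2}$-regularity itself, rather than invoking an abstract multiplier theorem on fractional Sobolev spaces, I would recycle the 2D lifting from the proof of Lemma \ref{galpha}: we already exhibited there a function $U\in\mH^1(\mathcal{R})$ with $U|_{\tilde{\Gamma}_+}=g_\alpha$ (namely $U=uv$, the product of the two corner functions). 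Picking any smooth extension $\tilde{\varphi}\in\mathscr{C}_0^\infty(\mathcal{R})$ of $\varphi$ with $\supp(\tilde{\varphi})\Subset\mathcal{R}$, the product $U\tilde{\varphi}$ remains in $\mH^1(\mathcal{R})$ (smooth compactly supported cutoffs act boundedly on $\mH^1$), and its trace on $\tilde{\Gamma}_+$ is exactly $g_\alpha\varphi$.

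For the continuity, it suffices to combine $|\langle v,g_\alpha\varphi\rangle|\le\|v\|_{\tilde{\mH}^{-1/2}(\tilde{\Gamma}_+)}\|g_\alpha\varphi\|_{\tilde{\mH}^{1/2}(\tilde{\Gamma}_+)}$ with the trace estimate $\|g_\alpha\varphi\|_{\tilde{\mH}^{1/2}(\tilde{\Gamma}_+)}\le C\|U\tilde{\varphi}\|_{\mH^1(\mathcal{R})}$ and a control of $\|U\tilde{\varphi}\|_{\mH^1(\mathcal{R})}$ by a finite-order $\mathscr{C}^k$-seminorm of $\varphi$. The main delicate point, and the reason I separate the two steps above, is the joint preservation of support \emph{and} $\mH^{1/2}$-regularity under multiplication by the cutoff; choosing $\tilde{\varphi}$ with compact support strictly inside $\mathcal{R}$ is what guarantees that the resulting trace lands inside $\tilde{\mH}^{1/2}$ and not merely in $\mH^{1/2}$. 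Note that the assumption $\alpha>0$ enters only through Lemma \ref{galpha}, which is exactly the place where the argument would break down at $\alpha=0$.
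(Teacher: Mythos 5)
Your proof is correct and follows essentially the same route as the paper: both define $g_\alpha v$ by duality, $\langle g_\alpha v,\phi\rangle:=\langle v,g_\alpha\phi\rangle$, and establish $g_\alpha\phi\in\tilde{\mH}^{1/2}(\tilde{\Gamma}_+)$ by lifting to $\mH^1(\mathcal{R})$ with the product of the corner functions from Lemma \ref{galpha} and an extension of $\phi$, the paper passing from $\mH^{1/2}$ to $\tilde{\mH}^{1/2}$ via the equivalent weighted norm $\|d^{-1/2}\cdot\|_{\mL^2(\tilde{\Gamma}_+)}$ where your extension-by-zero argument does the same job. One small slip to fix: requiring $\supp(\tilde{\varphi})\Subset\mathcal{R}$ with $\mathcal{R}$ open would force $\tilde{\varphi}$ to vanish on the bottom edge $\tilde{\Gamma}_+$ and hence kill the trace; what you actually need is compact support in the $x$-direction only (e.g.\ $\tilde{\varphi}(x,y)=\varphi(x)\chi(y)$ with $\chi(0)=1$, or simply $\tilde{\varphi}(x,y)=\varphi(x)$ as in the paper), which is what your argument really uses.
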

\begin{proof}
We can define $g_\alpha v$ as the distribution such that for $\phi \in \mathscr{C}^\infty_0(\tilde{\Gamma}_+)$, 
\[\langle g_\alpha\, v, \phi\rangle:=\langle v, g_\alpha\, \phi \rangle\]
provided that $g_\alpha \phi$ belongs to $\tilde{\mH}^{1/2}(\tilde{\Gamma}_+)$. But this can be shown exactly as in the proof of Lemma \ref{galpha}. More precisely, by setting $w(x,y)=u(x,y)v(x,y)\phi(x)$, we find
\[
\|w\|_{\mH^1(\mathcal{R})} \le C\,(\|\phi\|_{\mL^\infty(\tilde{\Gamma}_+)} + \|d_x\phi\|_{\mL^\infty(\tilde{\Gamma}_+)})
\]
where here and below $C>0$ is a constant which is independent of $\phi$. By continuity of the trace mapping from $\mH^1(\mathcal{R})$ to $\mH^{1/2}(\tilde{\Gamma}_+)$, since $w|_{\tilde{\Gamma}_+}=g_\alpha\phi$, we infer that 
\[
\|g_\alpha \phi\|_{\mH^{1/2}(\tilde{\Gamma}_+)}\le C\,(\|\phi\|_{\mL^\infty(\tilde{\Gamma}_+)} + \|d_x\phi\|_{\mL^\infty(\tilde{\Gamma}_+)}).
\]
Introduce the points $P_1:=(-\eta,0)$, $P_2:=(1+\eta,0)$ corresponding to the ends of $\tilde{\Gamma}_+$, and $d(x):=d((x,0),\{P_1,P_2\})$ ($d$ is the distance function to the boundary of $\tilde{\Gamma}_+$).
Classically, the space $\tilde{\mH}^{1/2}(\tilde{\Gamma}_+)$ can be endowed with the norm $\|\cdot\|_{\tilde{\mH}^{1/2}(\tilde{\Gamma}_+)}$ such that for $\psi \in \tilde{\mH}^{1/2}(\tilde{\Gamma}_+)$
\[\|\psi\|_{\tilde{\mH}^{1/2}(\tilde{\Gamma}_+)}=\left(\|\psi\|^2_{\mH^{1/2}(\tilde{\Gamma}_+)}+ \left\|d^{-1/2} \psi\right\|^2_{\mL^2(\tilde{\Gamma}_+)}\right)^{1/2}.\]
Hence for all $\phi \in \mathscr{C}^\infty_0(\tilde{\Gamma}_+)$ which is compactly supported in $K \subset \tilde{\Gamma}_+$, there exists a constant $C_K$ depending on $K$ such that
\[
\|g_\alpha \phi\|_{\tilde{\mH}^{1/2}(\tilde{\Gamma}_+)}\le C_K \|g_\alpha \phi\|_{\mH^{1/2}(\tilde{\Gamma}_+)} \le C_K\,(\|\phi\|_{\mL^\infty(\tilde{\Gamma}_+)} + \|d_x\phi\|_{\mL^\infty(\tilde{\Gamma}_+)}).
\]
This yields 
\[
|\langle g_\alpha\, v, \phi\rangle| \leq C_K\,(\|\phi\|_{\mL^\infty(\tilde{\Gamma}_+)} + \|d_x\phi\|_{\mL^\infty(\tilde{\Gamma}_+)}),
\]
which proves that $g_\alpha v$ is an element of $\mathcal{D}'(\tilde{\Gamma}_+)$.
\end{proof}
\begin{proposition}\label{PropositionDistrib}
For $\alpha>0$, if $u\in \tilde{\mV}_\alpha(\Om)$, then $\partial_x(\mathds{1}_{\Gamma_+}(x)\,x^\alpha(1-x)^\alpha\partial_x u)$ belongs to $\mathcal{D}'(\tilde{\Gamma}_+)$.
\end{proposition}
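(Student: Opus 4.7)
The plan is to reduce the statement to Lemma \ref{galphav} applied with $v=\partial_x u|_{\tilde{\Gamma}_+}$, after which the outer derivative is interpreted automatically as a distributional one. First I would observe that since $u\in\tilde{\mV}_\alpha(\Om)\subset\mH^1(\Om)$, the trace $u|_{\partial\Om}$ belongs to $\mH^{1/2}(\partial\Om)$ and so its tangential derivative $\partial_\tau u|_{\partial\Om}$ belongs to $\mH^{-1/2}(\partial\Om)$ (continuity of tangential differentiation between these Sobolev spaces being standard on a Lipschitz boundary). Because $\tilde{\Gamma}_+$ is flat and horizontal, this tangential derivative coincides on $\tilde{\Gamma}_+$ with $\partial_x u$ understood in $\mathcal{D}'(\tilde{\Gamma}_+)$. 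By the very definition of $\tilde{\mH}^{-1/2}(\tilde{\Gamma}_+)$ recalled at the beginning of Section \ref{SectionAlphaMoins1}, namely as the space of restrictions to $\tilde{\Gamma}_+$ of distributions in $\mH^{-1/2}(\partial\Om)$, this yields exactly $\partial_x u|_{\tilde{\Gamma}_+}\in\tilde{\mH}^{-1/2}(\tilde{\Gamma}_+)$.

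With this at hand, I would apply Lemma \ref{galphav} with $v=\partial_x u|_{\tilde{\Gamma}_+}$ to conclude that $g_\alpha\,\partial_x u$, where $g_\alpha(x)=\mathds{1}_{\Gamma_+}(x)\,x^\alpha(1-x)^\alpha$, is a well-defined element of $\mathcal{D}'(\tilde{\Gamma}_+)$. Since distributional differentiation maps $\mathcal{D}'(\tilde{\Gamma}_+)$ into itself, it follows at once that $\partial_x(g_\alpha\,\partial_x u)\in\mathcal{D}'(\tilde{\Gamma}_+)$, which is the claim.

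I do not expect any serious obstacle here: the real work has already been done in Lemmas \ref{galpha} and \ref{galphav}, where the $2$D lifting $u(x,y)=r^\alpha\cos(\theta/2)$ is what makes multiplication by the discontinuous and vanishing weight $g_\alpha$ meaningful against a distribution of order one half. Once those are granted, the present proposition is essentially bookkeeping. It is worth noting that in this particular statement only the $\mH^1(\Om)$-regularity of $u$ is used: the weighted integrability $x^{\alpha/2}(1-x)^{\alpha/2}\partial_x u\in\mL^2(\Gamma_+)$ built into $\tilde{\mV}_\alpha(\Om)$ plays no role, and would only be needed if one wanted to identify $g_\alpha\,\partial_x u$ with a more regular object (e.g. an $\mL^2$-function) rather than merely as a distribution on $\tilde{\Gamma}_+$.
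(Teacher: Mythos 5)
Your proof is correct and follows essentially the same route as the paper's: deduce $\partial_x u|_{\tilde{\Gamma}_+}\in\tilde{\mH}^{-1/2}(\tilde{\Gamma}_+)$ from $u\in\mH^1(\Om)$, invoke Lemma \ref{galphav} to make sense of $g_\alpha\,\partial_x u$ in $\mathcal{D}'(\tilde{\Gamma}_+)$, and then differentiate in the distributional sense. Your spelled-out justification of the first step via the tangential derivative on $\partial\Om$ is a slightly more explicit version of what the paper leaves implicit, and your closing observation that only the $\mH^1$-regularity of $u$ is used is accurate.
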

\begin{proof}
If $u\in \tilde{\mV}_\alpha(\Om)$ then we have $u\in\mH^1(\Om)$ and so $u|_{\tilde{\Gamma}_+} \in \mH^{1/2}(\tilde{\Gamma}_+)$. This implies that $\partial_x u|_{\tilde{\Gamma}_+} \in \tilde{\mH}^{-1/2}(\tilde{\Gamma}_+)$. Then 
Lemma \ref{galphav} guarantees that $g_\alpha \partial_x u|_{\tilde{\Gamma}_+}$ is well defined in $\mathcal{D}'(\tilde{\Gamma}_+)$. As a consequence, $\partial_x(g_\alpha \partial_x u)$ belongs to $\mathcal{D}'(\tilde{\Gamma}_+)$.
\end{proof}
Now we can state the main result concerning the relationship between problems (\ref{faible_ter}) and (\ref{fort_ter}).
\begin{theorem}\label{faible_fort_ter}~\\
For any $\alpha \geq 0$, if $u$ satisfies (\ref{faible_ter}), then it solves (\ref{fort_ter}).\\
For $\alpha \in (0,1]$,	if $u \in \tilde{\mV}_\alpha(\Om)$ satisfies (\ref{fort_ter}), then it solves (\ref{faible_ter}).
\end{theorem}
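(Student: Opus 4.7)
The forward implication (\ref{faible_ter})$\Rightarrow$(\ref{fort_ter}), valid for every $\alpha\geq0$, is established verbatim as in Theorem~\ref{faible_fort}: first pick $v\in\mathscr{C}^\infty_0(\Om)$ in (\ref{faible_ter}) to obtain $-\Delta u+u=f$ in $\mathcal{D}'(\Om)$; then use test functions whose trace is supported in $\partial\Om\setminus\overline{\tilde{\Gamma}_+}$ to obtain $\partial_\nu u=0$ there; finally, apply Green's formula and restrict to test functions whose trace on $\tilde{\Gamma}_+$ lies in $\mathscr{C}^\infty_0(\tilde{\Gamma}_+)$, which identifies $\partial_\nu u$ on $\tilde{\Gamma}_+$ with $s\,\partial_x(g_\alpha\partial_xu)$ in $\mathcal{D}'(\tilde{\Gamma}_+)$. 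This identification makes sense thanks to Proposition~\ref{PropositionDistrib} when $\alpha>0$, and can be checked directly when $\alpha=0$.

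The reverse implication, which only holds for $\alpha\in(0,1]$, is the substantive step. Let $u\in\tilde{\mV}_\alpha(\Om)$ solve (\ref{fort_ter}) and fix an arbitrary $v\in\tilde{\mV}_\alpha(\Om)$. Green's formula together with $\partial_\nu u=0$ on $\partial\Om\setminus\overline{\tilde{\Gamma}_+}$ yields
\[
\int_\Om\bigl(\nabla u\cdot\nabla\overline{v}+u\overline{v}\bigr)\,dxdy-\int_\Om f\overline{v}\,dxdy=\bigl\langle\partial_\nu u,\overline{v}|_{\tilde{\Gamma}_+}\bigr\rangle_{\mH^{-1/2}(\tilde{\Gamma}_+),\mH^{1/2}(\tilde{\Gamma}_+)},
\]
so what remains is to show that this boundary pairing equals $-s\int_{\Gamma_+}x^\alpha(1-x)^\alpha\,\partial_x u\,\partial_x\overline{v}\,dx$. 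Since both sides depend continuously on $v\in\tilde{\mV}_\alpha(\Om)$, and since $\mathscr{C}^\infty(\overline{\Om})$ is dense in $\tilde{\mV}_\alpha(\Om)$ for $\alpha\in[0,1]$ (by a direct adaptation of Lemma~\ref{infini}, the weight $x^{\alpha/2}(1-x)^{\alpha/2}$ being symmetric at the two endpoints of $\Gamma_+$), it suffices to treat $v\in\mathscr{C}^\infty(\overline{\Om})$, for which $\phi:=v|_{\tilde{\Gamma}_+}\in\mathscr{C}^\infty(\overline{\tilde{\Gamma}_+})$.

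For such $\phi$ I would introduce a cutoff $\chi\in\mathscr{C}^\infty_0(\tilde{\Gamma}_+)$ equal to $1$ on $[0,1]$ and decompose $\phi=\chi\phi+(1-\chi)\phi$. The first piece $\chi\phi$ extends by zero to an element of $\tilde{\mH}^{1/2}(\tilde{\Gamma}_+)\subset\mH^{1/2}(\partial\Om)$, so the $\mH^{-1/2}$--$\mH^{1/2}$ pairing reduces to the distributional pairing; combining the GIBC $\partial_\nu u=s\,\partial_x(g_\alpha\partial_xu)$ in $\mathcal{D}'(\tilde{\Gamma}_+)$ with the fact that $g_\alpha\partial_x u\in\mL^2(\tilde{\Gamma}_+)$ is supported in $[0,1]$ (which relies on $\alpha>0$, so that $g_\alpha$ is continuous and vanishes at $0$ and $1$) and with $\chi'\equiv0$ on $[0,1]$, one integration by parts produces exactly $-s\int_{\Gamma_+}x^\alpha(1-x)^\alpha\partial_x u\,\partial_x\phi\,dx$. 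For the second piece, I would observe that $\partial_\nu u\in\mH^{-1/2}(\partial\Om)$ is supported in $\overline{[0,1]\times\{0\}}$ (it vanishes on $\partial\Om\setminus\overline{\tilde{\Gamma}_+}$ by hypothesis and, on $\tilde{\Gamma}_+\setminus[0,1]$, coincides distributionally with $s\,\partial_x(g_\alpha\partial_xu)$, whose density is zero there), while $(1-\chi)\phi$ vanishes on an open neighborhood of this set; a standard support argument then yields $\langle\partial_\nu u,(1-\chi)\phi\rangle=0$, and adding the two contributions finishes the proof.

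The main obstacle I anticipate is making this last support argument fully rigorous: the identity $\langle\partial_\nu u,\psi\rangle_{\mH^{-1/2},\mH^{1/2}}=0$ for $\psi$ vanishing on a neighborhood of $\supp(\partial_\nu u)$ requires approximating $\psi$ in the $\mH^{1/2}$-topology by smooth functions that all vanish on that same neighborhood, and this must be done without disturbing the behaviour of $\psi$ at the auxiliary endpoints $\pm\eta$ of $\tilde{\Gamma}_+$. The assumption $\alpha>0$ is essential throughout, via Lemmas~\ref{galpha}--\ref{galphav} and Proposition~\ref{PropositionDistrib}, which ensure that $g_\alpha\partial_x u$ is a genuine $\mL^2$ function with no spurious distributional contribution at the jumps of $\mathds{1}_{\Gamma_+}$; the assumption $\alpha\leq1$ enters only through the density of $\mathscr{C}^\infty(\overline{\Om})$ in $\tilde{\mV}_\alpha(\Om)$.
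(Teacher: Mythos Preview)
Your argument is correct, and for the forward implication it coincides with the paper's. For the reverse implication you take a genuinely different (and slightly more laborious) route than the paper.

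The paper avoids your cutoff-plus-support argument entirely. Instead of testing with $v\in\mathscr{C}^\infty(\overline{\Om})$ and then decomposing $\phi=v|_{\tilde{\Gamma}_+}$ as $\chi\phi+(1-\chi)\phi$, it tests directly with $v\in\mathscr{C}^\infty_0(\overline{\Om}\setminus\{P_1,P_2\})$, where $P_1=(-\eta,0)$ and $P_2=(1+\eta,0)$ are the endpoints of $\tilde{\Gamma}_+$. For such $v$ the trace $v|_{\tilde{\Gamma}_+}$ already lies in $\mathscr{C}^\infty_0(\tilde{\Gamma}_+)$, so the GIBC in $\mathcal{D}'(\tilde{\Gamma}_+)$ can be paired with it at once, and one integration by parts yields the boundary term $-s\int_{\Gamma_+}x^\alpha(1-x)^\alpha\partial_xu\,\partial_x\overline{v}\,dx$ without any further decomposition. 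The density step is then the density of $\mathscr{C}^\infty_0(\overline{\Om}\setminus\{P_1,P_2\})$ in $\tilde{\mV}_\alpha(\Om)$, which combines the analogue of Lemma~\ref{infini} with the standard fact that removing finitely many points from $\overline{\Om}$ does not affect density of smooth functions in $\mH^1(\Om)$. This sidesteps precisely the ``main obstacle'' you flag: there is no need to show that $\partial_\nu u$, as an element of $\mH^{-1/2}(\partial\Om)$, is supported in $\overline{\Gamma_+}$ and that it pairs to zero with a function vanishing on a neighbourhood of that set. Your route is valid, but the paper's choice of test-function class makes the proof shorter and removes the only delicate point.
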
	

\begin{proof}
The first item can be established by working as in the proof of Theorem \ref{faible_fort} with the help of Proposition \ref{PropositionDistrib}. Let us focus our attention on the second point.	\\
Assume that $u \in \tilde{\mV}_\alpha(\Om)$ satisfies (\ref{fort_ter}). Introducing again the points $P_1=(-\eta,0)$, $P_2=(1+\eta,0)$ corresponding to the ends of $\tilde{\Gamma}_+$, multiplying the first equation of (\ref{fort_ter}) by $v \in \mathscr{C}^\infty_0(\overline{\Omega}\setminus\{P_1,P_2\})$ and using the Green formula (\ref{IPP}),
we get
\be\label{interm2}
\int_\Omega \nabla u\cdot \nabla \overline{v}+u\overline{v}\,dxdy - \langle\partial_\nu u,\overline{v} \rangle_{\mH^{-1/2}(\tilde{\Gamma}_+),\mH^{1/2}(\tilde{\Gamma}_+)}=\int_\Omega f\,\overline{v}\,dxdy.\ee
Since $u \in \tilde{\mV}_\alpha(\Om)$, Proposition \ref{PropositionDistrib} guarantees that $\partial_x(g_\alpha \partial_x u)$ belongs to $\mathcal{D}'(\tilde{\Gamma}_+)$. Exploiting that $v|_{\tilde{\Gamma}_+} \in \mathscr{C}^\infty_0(\tilde{\Gamma}_+)$, we can write 
\[\langle \partial_x( g_\alpha \partial_x u),\overline{v} \rangle_{\mathcal{D}'(\tilde{\Gamma}_+),\mathscr{C}^\infty_0(\tilde{\Gamma}_+)}= -\langle g_\alpha \partial_x u,\partial_x \overline{v}\rangle_{\mathcal{D}'(\tilde{\Gamma}_+),\mathscr{C}^\infty_0(\tilde{\Gamma}_+)}=-\int_{\Gamma_+} x^\alpha \partial_x u\,\partial_x \overline{v}\,dx.\]
On the other hand, using that
\[s\langle \partial_x (g_\alpha\partial_x u),\overline{v} \rangle_{\mathcal{D}'(\tilde{\Gamma}_+),\mathscr{C}^\infty_0(\tilde{\Gamma}_+)}= \langle \partial_\nu u,\overline{v} \rangle_{\mathcal{D}'(\tilde{\Gamma}_+),\mathscr{C}^\infty_0(\tilde{\Gamma}_+)}=\langle \partial_\nu u,\overline{v}\rangle_{\mH^{-1/2}(\tilde{\Gamma}_+),\mH^{1/2}(\tilde{\Gamma}_+)},\]	
we obtain	
\[\langle \partial_\nu u,\overline{v}\rangle_{\mH^{-1/2}(\tilde{\Gamma}_+),\mH^{1/2}(\tilde{\Gamma}_+)}=-s\int_{\Gamma_+} x^\alpha \partial_x u\,\partial_x \overline{v}\,dx. \]
With (\ref{interm2}), this gives
\[\int_\Omega \nabla u\cdot \nabla \overline{v}+u\overline{v}\,dxdy + s \int_{\Gamma_+} x^\alpha(1-x)^\alpha\partial_x u\, \partial_x \overline{v}\,dx=\int_\Omega f\,\overline{v}\,dxdy,\qquad \forall v \in\mathscr{C}^\infty_0(\overline{\Om}\setminus\{P_1,P_2\}).
\]
Finally, we can conclude by using the density of $\mathscr{C}^\infty_0(\overline{\Om}\setminus\{P_1,P_2\})$ in $\tilde{\mV}_\alpha(\Om)$. This latter result can be shown by exploiting Lemma \ref{infini} and using the fact that $\mH^1$ functions vanishing in a neighbourhood of $\{P_1, P_2\}$ are dense in $\mH^1(\Om)$ (see e.g. Lemma 1.2.2 in \cite{Claeys08}).
\end{proof}

It is natural to wonder if the second point of Theorem \ref{faible_fort_ter} also holds for $\alpha=0$. Note that for $\alpha=0$, there holds $\tilde{\mV}_0(\Om)=\mV_0(\Om)$ and that (\ref{faible}), (\ref{faible_ter}) are the same. Let us define the space
\[
\mV^2_0(\Om):=\{v \in \mH^1(\Omega),\,v|_{\Gamma_+} \in \mH^2(\Gamma_+)\}.\]
We emphasize that $\mV^2_0(\Om)$ is smaller than $\mV_0(\Om)$ because in the latter space, we only impose $v|_{\Gamma_+} \in \mH^1(\Gamma_+)$. We have the following result.
\begin{proposition}
A function $u\in\mV^2_0(\Om)$ solves (\ref{faible_ter}) with $\alpha=0$ if and only if it satisfies 
\be  \label{fort_regulier}
	\begin{array}{|rccl}
-\Delta u +u&=& f&  \mbox{in } \Omega\\[3pt]
\partial_\nu u&=&0 & \mbox{on } \partial\Om\setminus\overline{\Gamma_+}\\[3pt]
\partial_\nu u &=& s\,\partial_{xx} u & \mbox{on } \Gamma_+\\[3pt]
\partial_x u|_{\Gamma_+} &=&0 & \mbox{at }\{O,P\}\mbox{ with }P=(1,0).
	\end{array}	
	\ee
\end{proposition}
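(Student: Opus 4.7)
Both implications rely on the combination of Green's formula in $\Omega$ with a one-dimensional integration by parts on the segment $\Gamma_+$. The key new ingredient compared with Theorem \ref{faible_fort} is the extra regularity $u|_{\Gamma_+}\in \mH^2(\Gamma_+)$, which makes $\partial_{xx}u$ a genuine $\mathrm{L}^2(\Gamma_+)$ function and gives a well defined pointwise trace of $\partial_xu$ at the endpoints $O$ and $P$.

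For the direct implication, I would start from (\ref{faible_ter}) with $\alpha=0$ and first recover $-\Delta u+u=f$ in $\Om$ and $\partial_\nu u=0$ on $\partial\Om\setminus\overline{\Gamma_+}$ by testing against $v\in\mathscr{C}^\infty_0(\Om)$ and $v\in\mathscr{C}^\infty_0(\overline{\Om}\setminus\overline{\Gamma_+})$ respectively, exactly as in the proof of Theorem \ref{faible_fort}. Then take an arbitrary $v\in\mathscr{C}^\infty(\overline{\Om})\subset \mV_0(\Om)$, apply the Green formula (\ref{IPP}) and use the previous identities to obtain
\[
\int_{\Gamma_+}\partial_\nu u\,\overline{v}\,dx + s\int_{\Gamma_+}\partial_x u\,\partial_x\overline{v}\,dx = 0.
\]
Since $u|_{\Gamma_+}\in \mH^2(\Gamma_+)$, a classical integration by parts on $\Gamma_+$ transforms this into
\[
\int_{\Gamma_+}\bigl(\partial_\nu u - s\,\partial_{xx}u\bigr)\overline{v}\,dx + s\bigl(\partial_x u(P)\overline{v(P)}-\partial_x u(O)\overline{v(O)}\bigr)=0.
\]
Restricting first to test functions supported away from $\{O,P\}$ yields the GIBC $\partial_\nu u=s\,\partial_{xx}u$ as an identity in $\mathrm{L}^2(\Gamma_+)$, and then choosing smooth $v$ with $v(O)=1$, $v(P)=0$ (resp. $v(O)=0$, $v(P)=1$) forces $\partial_x u(O)=0$ and $\partial_x u(P)=0$, so that (\ref{fort_regulier}) holds.

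For the converse, suppose $u\in \mV^2_0(\Om)$ solves (\ref{fort_regulier}). For any $v\in\mathscr{C}^\infty(\overline{\Om})$, multiply $-\Delta u+u=f$ by $\overline{v}$ and apply Green's formula to get
\[
\int_\Om \nabla u\cdot\nabla\overline{v}+u\,\overline{v}\,dxdy - \int_{\Gamma_+}\partial_\nu u\,\overline{v}\,dx = \int_\Om f\overline{v}\,dxdy,
\]
using $\partial_\nu u=0$ on $\partial\Om\setminus\overline{\Gamma_+}$. Substituting $\partial_\nu u=s\,\partial_{xx}u$ and integrating by parts on $\Gamma_+$, the endpoint terms $s\,\partial_x u(P)\overline{v(P)}-s\,\partial_x u(O)\overline{v(O)}$ vanish thanks to the conditions at $O$ and $P$, and we recover (\ref{faible_ter}) for every $v\in\mathscr{C}^\infty(\overline{\Om})$. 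One then extends this identity to all $v\in \mV_0(\Om)=\tilde{\mV}_0(\Om)$ by invoking the density of $\mathscr{C}^\infty(\overline{\Om})$ in $\mV_0(\Om)$ established in Lemma \ref{infini}.

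The only delicate point is bookkeeping the distinct senses in which the various conditions are imposed (PDE in $\mathcal{D}'(\Om)$, Neumann condition in $\mH^{-1/2}$, GIBC in $\mathrm{L}^2(\Gamma_+)$, endpoint conditions pointwise) and making sure that the test space $\mathscr{C}^\infty(\overline{\Om})$ is rich enough to separate the GIBC itself from the two endpoint contributions; the $\mH^2$ regularity on $\Gamma_+$ is precisely what legitimates these manipulations.
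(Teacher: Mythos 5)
Your proposal is correct and follows essentially the same route as the paper: Green's formula in $\Omega$ combined with a one-dimensional integration by parts on $\Gamma_+$ (legitimated by $u|_{\Gamma_+}\in\mH^2(\Gamma_+)$), then a choice of test functions that separates the interior identity $\partial_\nu u=s\,\partial_{xx}u$ from the two endpoint contributions, and the same density argument for the converse. The only cosmetic difference is that the paper first invokes Theorem \ref{faible_fort} to get the GIBC in $\mathcal{D}'(\Gamma_+)$ and only then extracts the endpoint conditions, whereas you re-derive both at once by localizing the test functions; this changes nothing of substance.
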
 
\begin{proof}
Assume that $u$ solves (\ref{faible_ter}) with $\alpha=0$ and belongs to $\mV^2_0(\Om)$.
From Theorem \ref{faible_fort}, it is clear that $u$ satisfies all the equations of (\ref{fort}) with $\alpha=0$.
On the other hand, for all $v \in \mV_0(\Om)$, we have
\[\langle\partial_\nu u,\overline{v} \rangle_{\mH^{-1/2}(\Gamma_+),\mH^{1/2}(\Gamma_+)} + s\int_{\Gamma_+}\partial_x u\, \partial_x \overline{v}\,dx=0.\]
Choosing $v$ such that $v|_{\Gamma_+}=\phi$ where $\phi$ is a given element of $\mathscr{C}^\infty(\overline{\Gamma_+})$, we get
\[\langle\partial_\nu u,\overline{\phi} \rangle_{\mH^{-1/2}(\Gamma_+),\mH^{1/2}(\Gamma_+)} + s\int_{\Gamma_+}\partial_x u\, \partial_x \overline{\phi}\,dx=0.\]
Using an integration by parts and the fact that $u|_{\Gamma_+} \in \mH^2(\Gamma_+)$, for all 
$\phi \in\mathscr{C}^\infty(\overline{\Gamma_+})$, we obtain
\[\langle\partial_\nu u,\overline{\phi} \rangle_{\mH^{-1/2}(\Gamma_+),\mH^{1/2}(\Gamma_+)} - s\int_{\Gamma_+}\partial^2_{xx} u\,\overline{\phi}\,dx - s\partial_x u|_{\Gamma_+}(O)\overline{\phi|_{\Gamma_+}(O)}+ s\partial_x u|_{\Gamma_+}(P)\overline{\phi|_{\Gamma_+}(P)}=0.\]
This yields $\partial_x u|_{\Gamma_+}(O)=\partial_x u|_{\Gamma_+}(P)=0$.\\
The converse statement follows the same lines.
\end{proof}
\begin{remark}
We do not know if Theorem \ref{faible_fort_ter} holds for $\alpha=0$ but we conjecture that additional conditions at $O$, $P$ are required for a solution to  (\ref{fort}) to always satisfy (\ref{faible_ter}). Unfortunately, the additional conditions $\partial_x u|_{\Gamma_+}(O)=\partial_x u|_{\Gamma_+}(P)=0$ have no meaning in general for $u \in \mV_0(\Om)$. 
\end{remark}

\section{Numerical illustrations}\label{Section_Num}

In this section, we provide numerical results concerning the discretization  of the problems (\ref{faible}) and (\ref{faible bis}). More precisely, we solve (\ref{faible}) and (\ref{faible bis}) using a standard P2 finite element method and display the numerical solutions for different meshes of the geometry. Admittedly, we have no guarantee that when the problem at the continuous level is well-posed, the numerical solution converges to the exact solution when the mesh is refined. The question of the numerical approximations of (\ref{faible}) and (\ref{faible bis}), when they are well-posed, remains to be studied. Here we simply wish to present what is observed. For the numerical analysis of problems similar to (\ref{faible}) with $s=-1$ and $\alpha=0$, we refer the reader to \cite{KaCDQ15,Inve22,CaGP23}.\\
\newline
Practically, we work in the rectangle $\Om=(-1,1)\times(0,1)$, use the library \texttt{Freefem++} \cite{Hech12} to compute the numerical solutions and display the results with \texttt{Paraview}\footnote{\texttt{Paraview}, \url{http://www.paraview.org/}.}. For the source term, we choose $f(x,y)=\cos(x)$.\\
\newline
In Figures \ref{Fig_sm1_alpha_0p5}--\ref{Fig_sm1_alpha_1p5}, we present the numerical solutions associated with the problem (\ref{faible}) for $s=-1$ and different values of $\alpha$. In Figures \ref{Fig_sm1_alpha_0p5}, \ref{Fig_sm1_alpha_0p95}, we take respectively $\alpha=0.5$ and $\alpha=0.95$. The meshes used in the experiments appear in Figure \ref{Fig_meshes}. In Figures \ref{Fig_sm1_alpha_0p5}, \ref{Fig_sm1_alpha_0p95}, when the mesh is refined, we observe that the numerical solution seems to converge. This is quite in agreement with Theorem \ref{casgeneral} with guarantees that (\ref{faible}) is well-posed in the Fredholm sense when $\alpha\in[0,1)$. 

\begin{figure}[!ht]
\centering
\includegraphics[width=3.2cm]{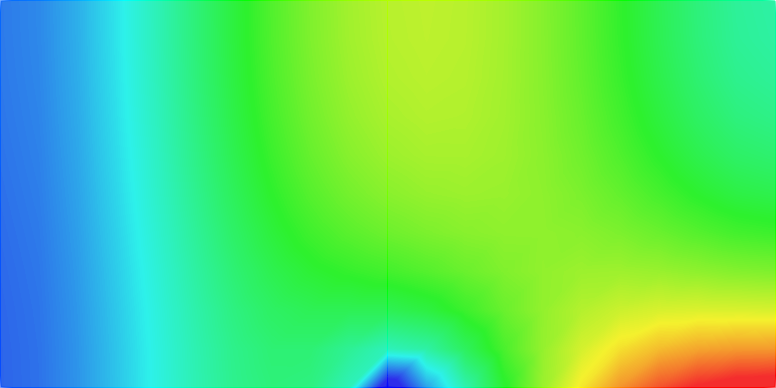}\,\includegraphics[width=3.2cm]{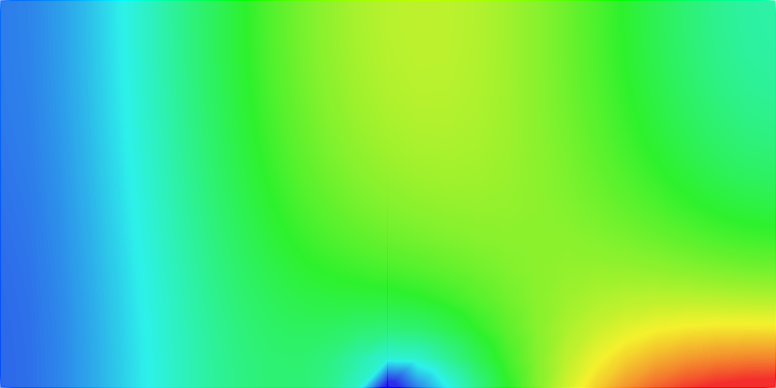}\,\includegraphics[width=3.2cm]{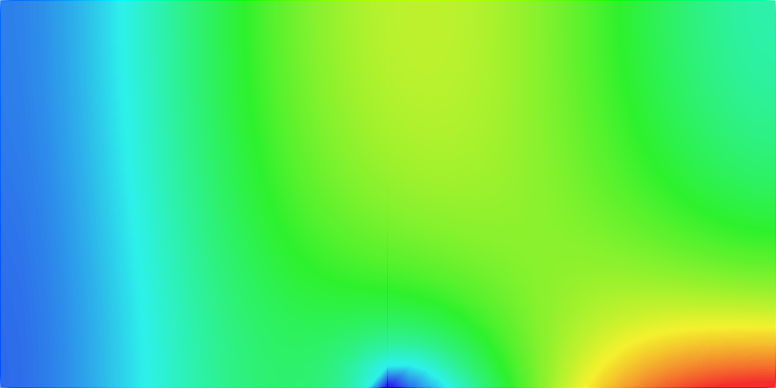}\,\includegraphics[width=3.2cm]{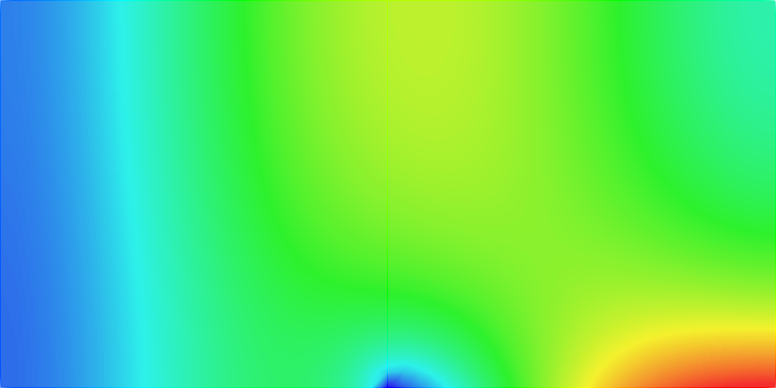}\,\includegraphics[width=3.2cm]{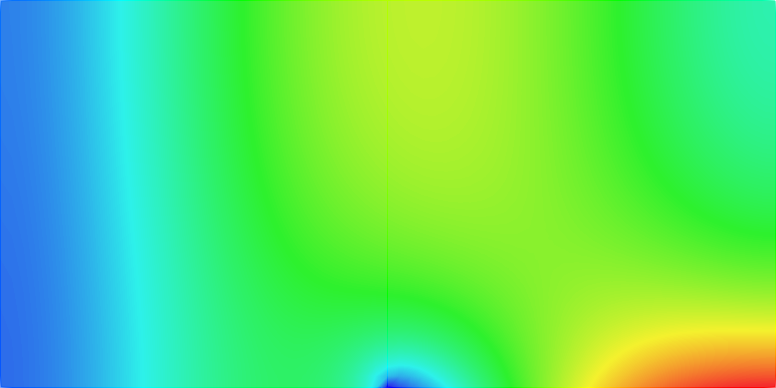}\\[3pt]
\includegraphics[width=3.2cm]{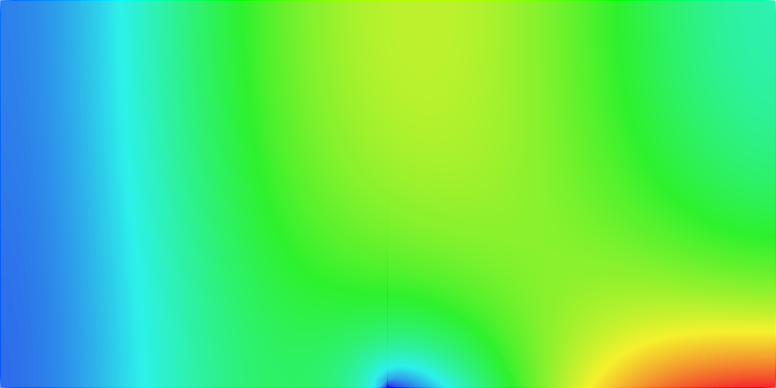}\,\includegraphics[width=3.2cm]{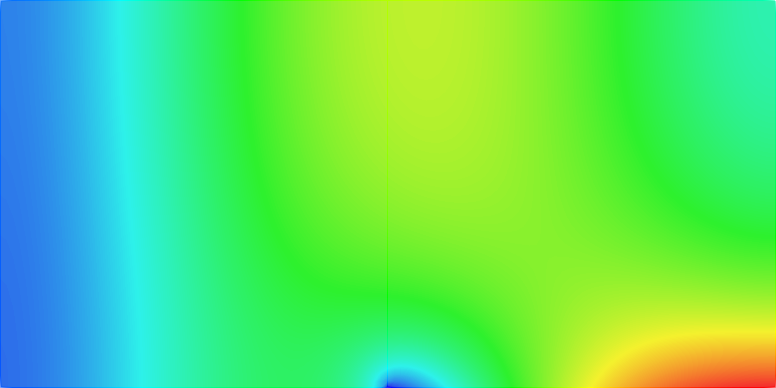}\,\includegraphics[width=3.2cm]{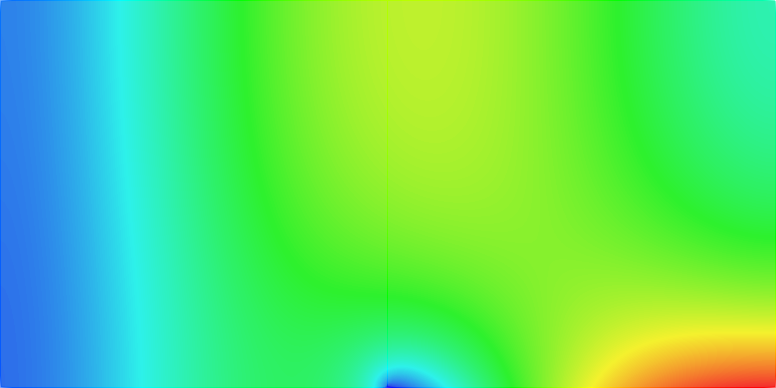}\,\includegraphics[width=3.2cm]{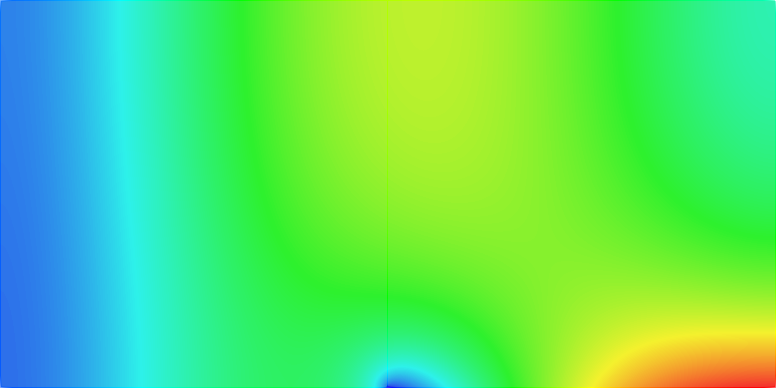}\,\includegraphics[width=3.2cm]{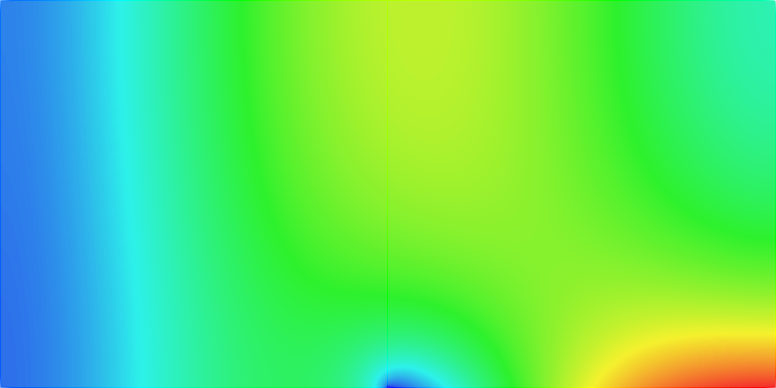}
\caption{Numerical resolution of (\ref{faible}) for ten different meshes with $s=-1$ and $\alpha=0.5$.\label{Fig_sm1_alpha_0p5}}
\end{figure}

\begin{figure}[!ht]
\centering
\includegraphics[width=3.2cm]{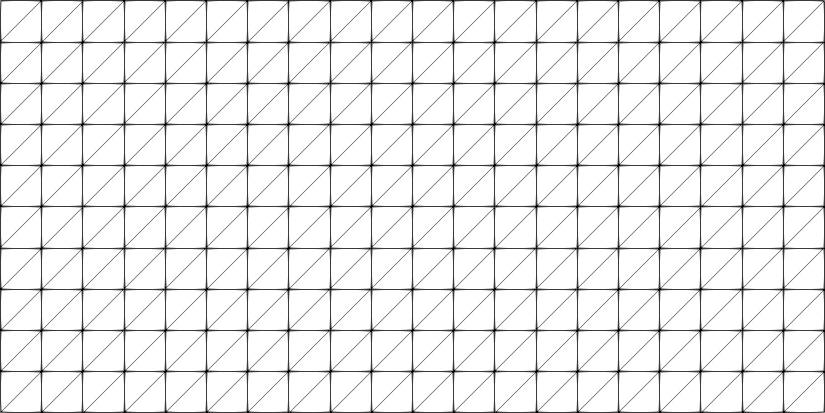}\,\includegraphics[width=3.2cm]{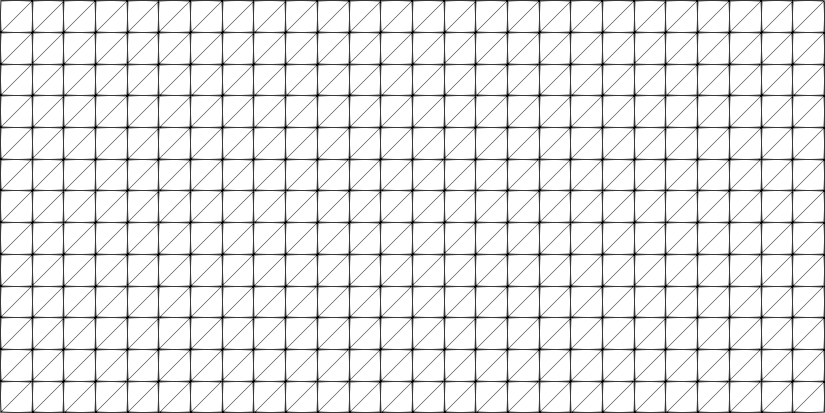}\,\includegraphics[width=3.2cm]{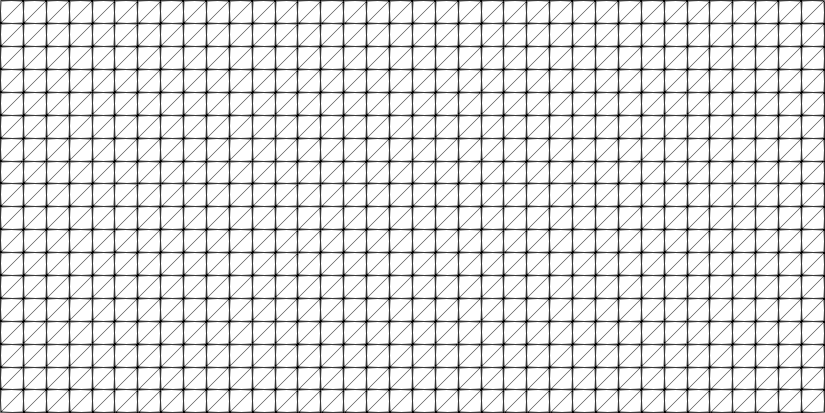}\,\includegraphics[width=3.2cm]{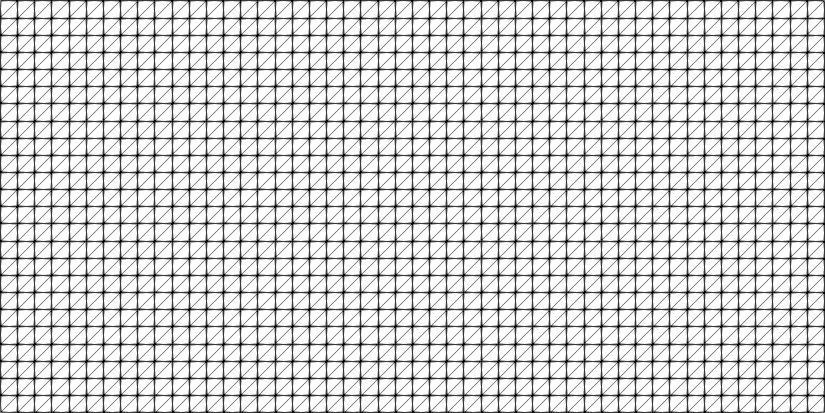}\,\includegraphics[width=3.2cm]{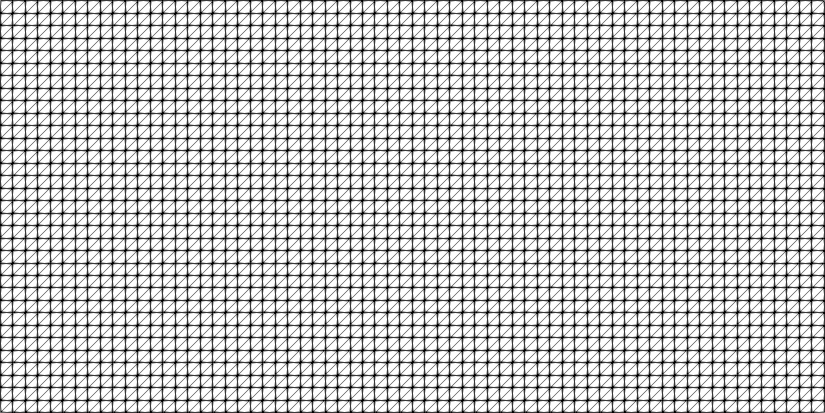}\\[3pt]
\includegraphics[width=3.2cm]{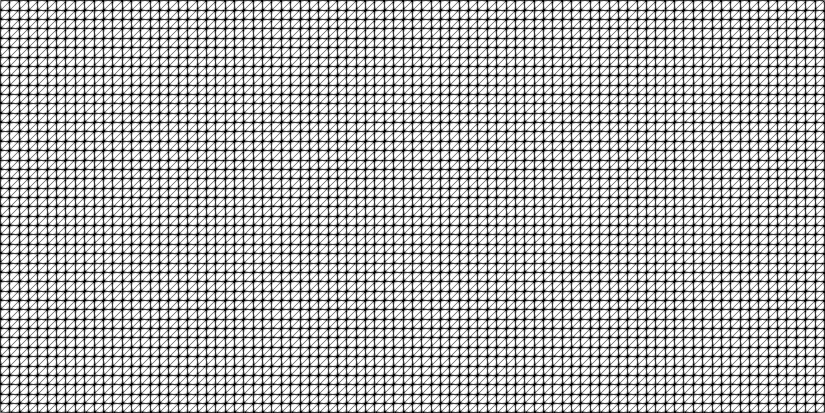}\,\includegraphics[width=3.2cm]{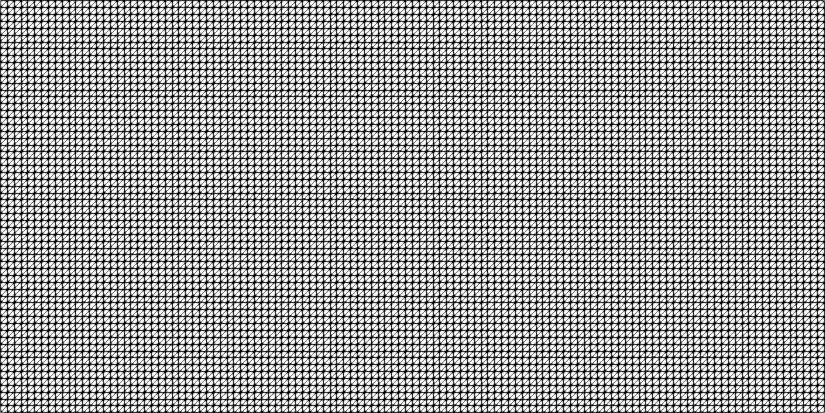}\,\includegraphics[width=3.2cm]{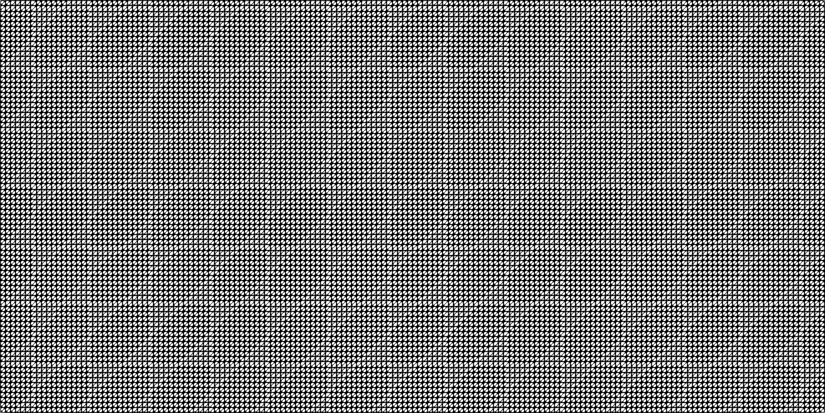}\,\includegraphics[width=3.2cm]{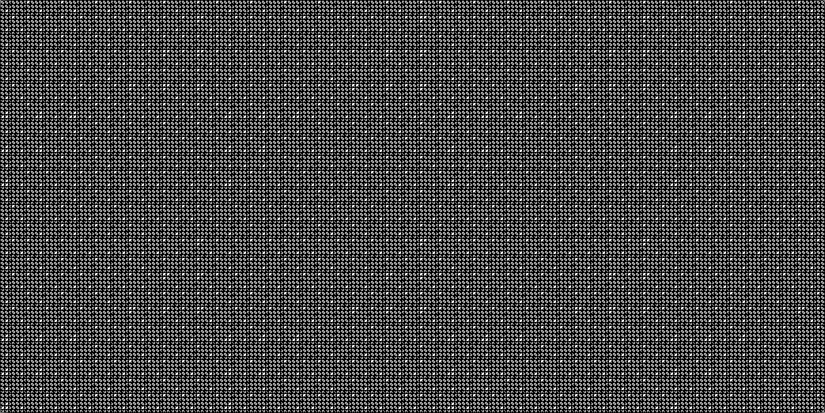}\,\includegraphics[width=3.2cm]{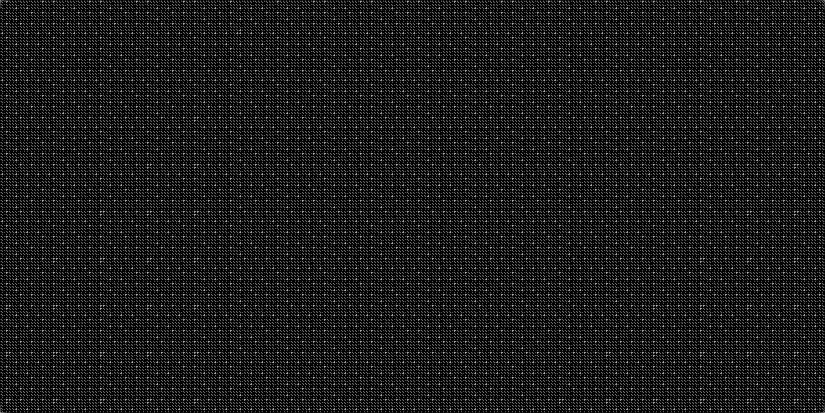}
\caption{Meshes used in the experiments.\label{Fig_meshes}}
\end{figure}

\begin{figure}[!ht]
\centering
\includegraphics[width=3.2cm]{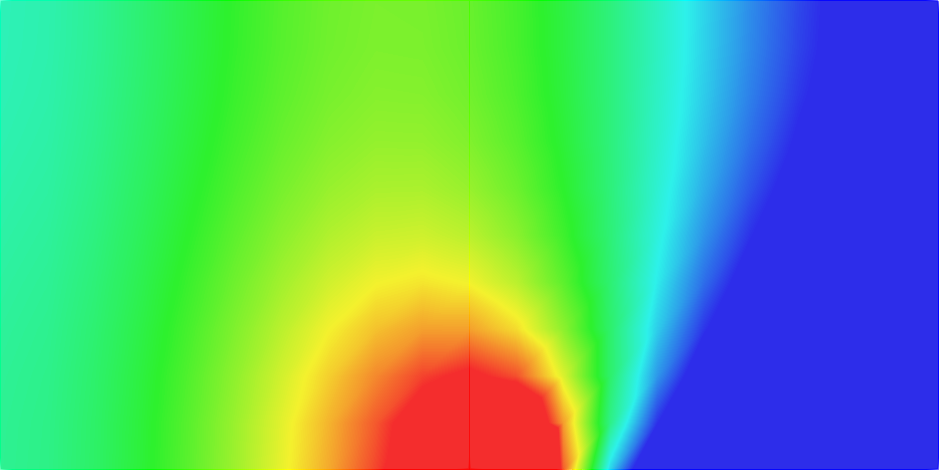}\,\includegraphics[width=3.2cm]{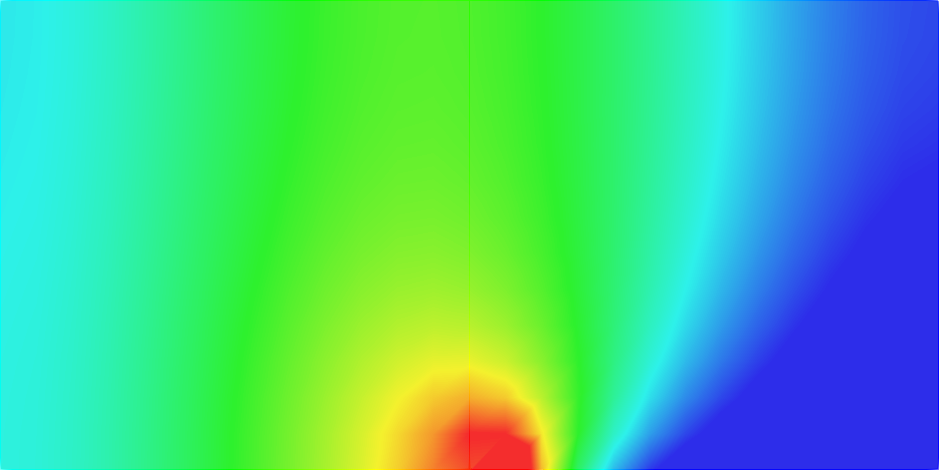}\,\includegraphics[width=3.2cm]{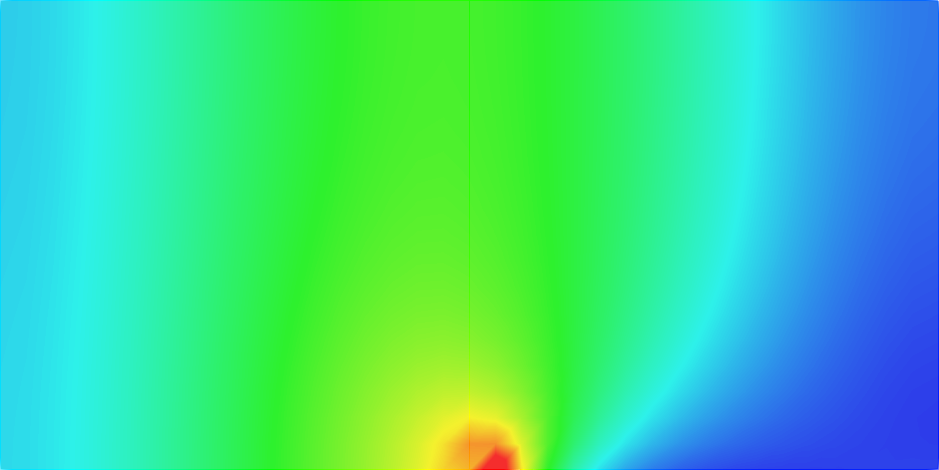}\,\includegraphics[width=3.2cm]{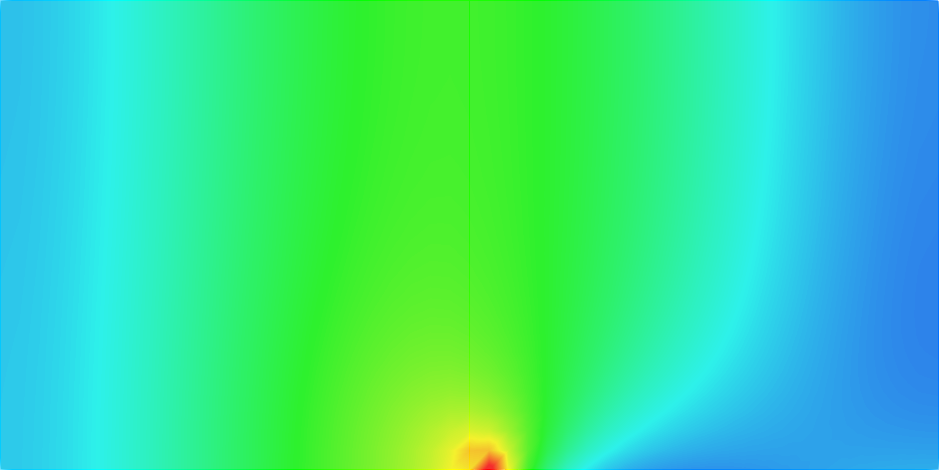}\,\includegraphics[width=3.2cm]{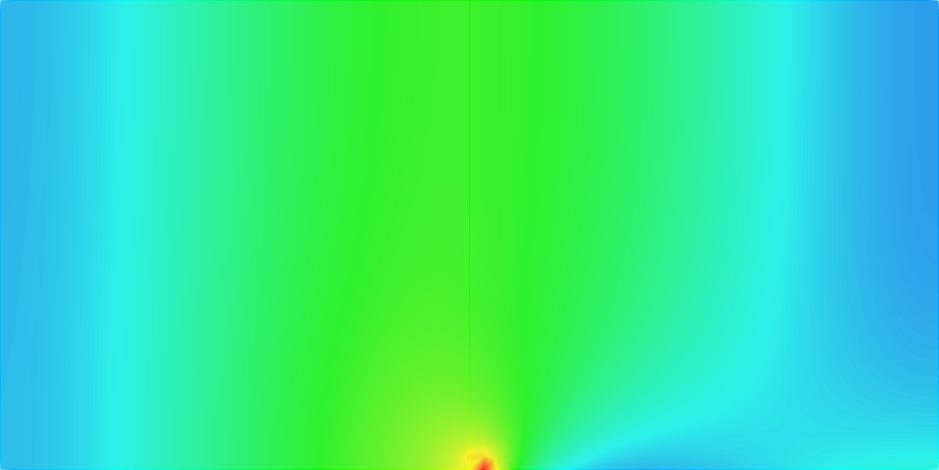}\\[3pt]
\includegraphics[width=3.2cm]{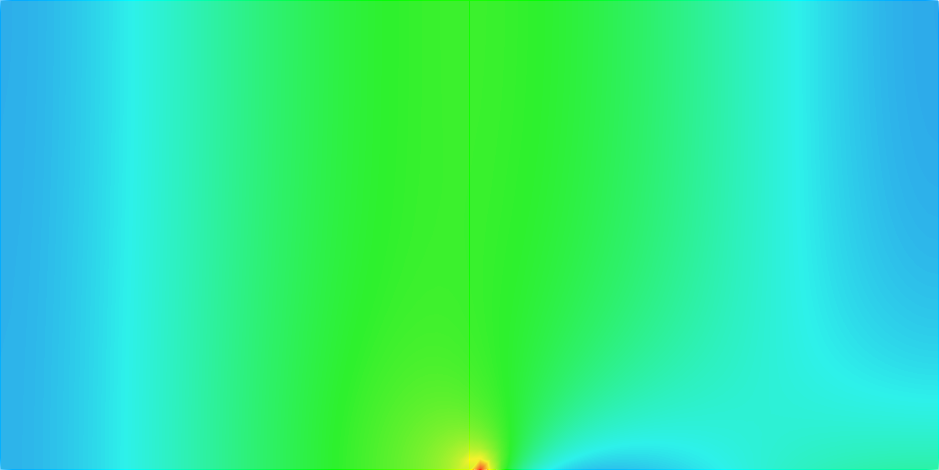}\,\includegraphics[width=3.2cm]{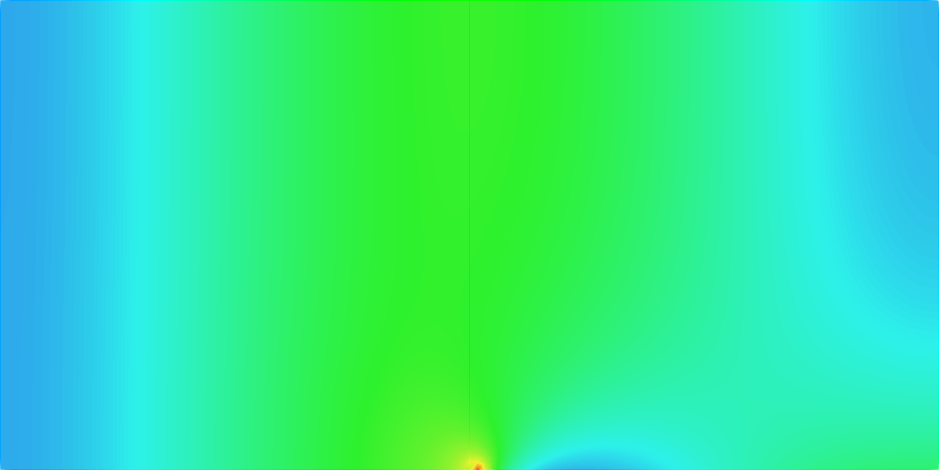}\,\includegraphics[width=3.2cm]{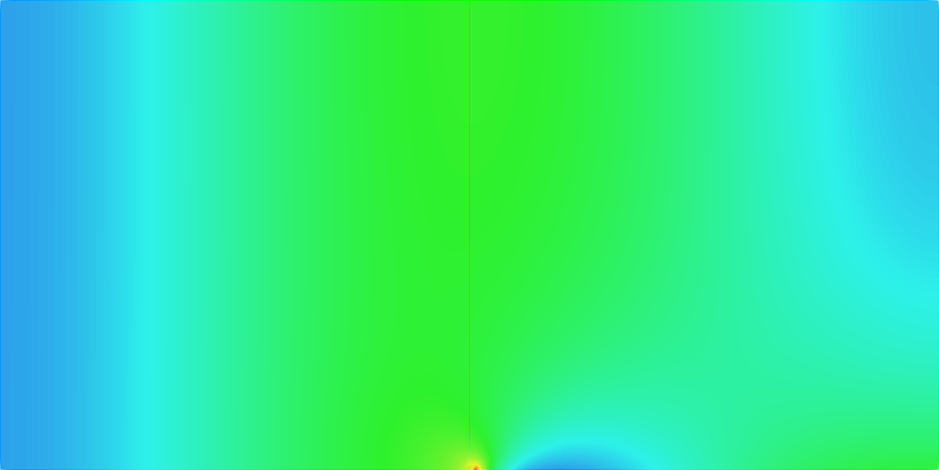}\,\includegraphics[width=3.2cm]{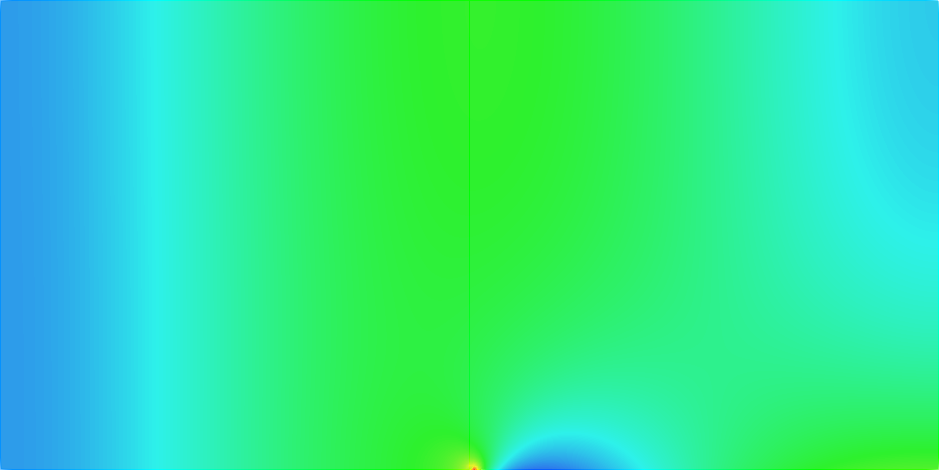}\,\includegraphics[width=3.2cm]{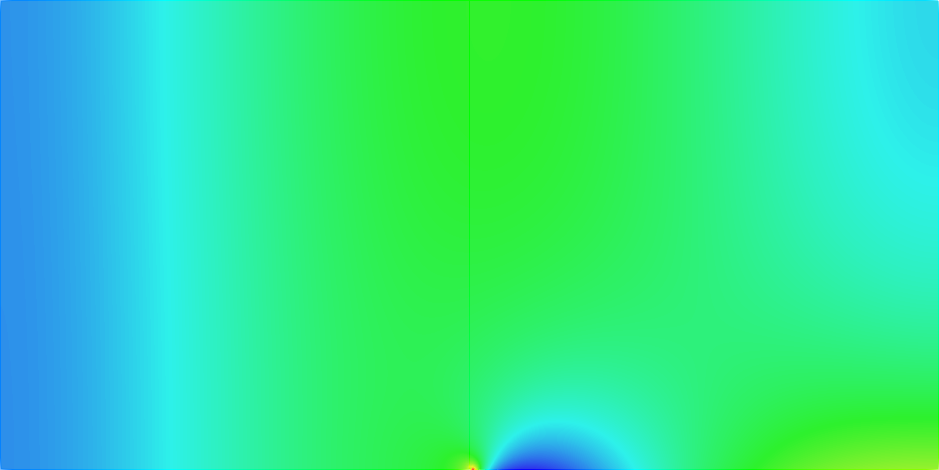}
\caption{Numerical resolution of (\ref{faible}) for ten different meshes with $s=-1$ and $\alpha=0.95$.\label{Fig_sm1_alpha_0p95}}
\end{figure}

In Figure \ref{Fig_sm1_alpha_1}, we display similar results but this time with $\alpha=1$. We note the that the numerical solution does not converge when the mesh is refined. According to Theorem \ref{cas1}, which ensures that the operator $A: \mV_1(\Om) \rightarrow \mV_1(\Om)$ is not of Fredholm type in that case, this was somehow expected. In Figure \ref{Fig_sm1_singu}, we show another view of the numerical solution for one particular mesh. This allows us to illustrate the singular behaviour at the origin which is responsible for the ill-posedness of the problem (see (\ref{defBHsingu})).

\begin{figure}[!ht]
\centering
\includegraphics[width=3.2cm]{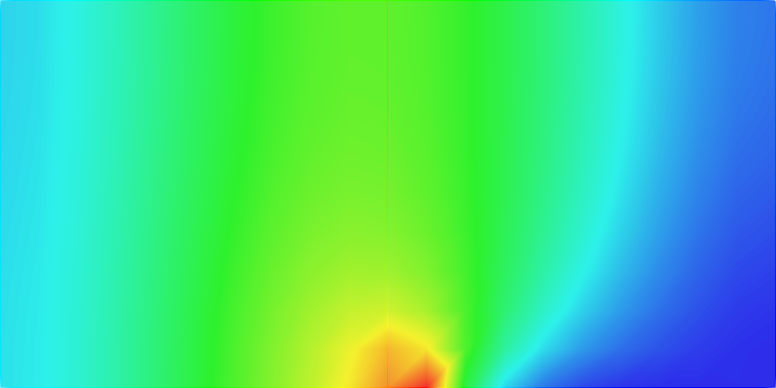}\,\includegraphics[width=3.2cm]{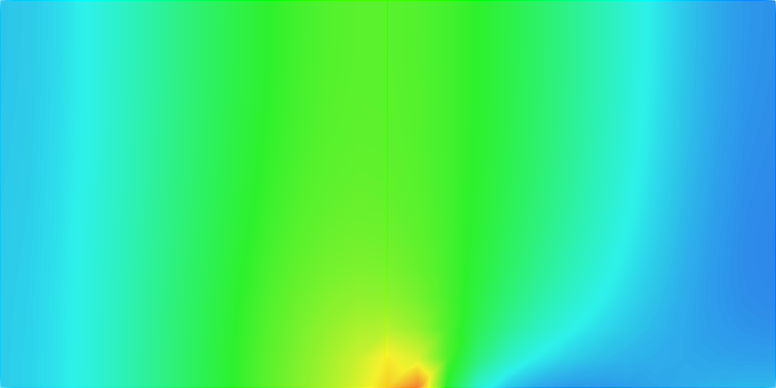}\,\includegraphics[width=3.2cm]{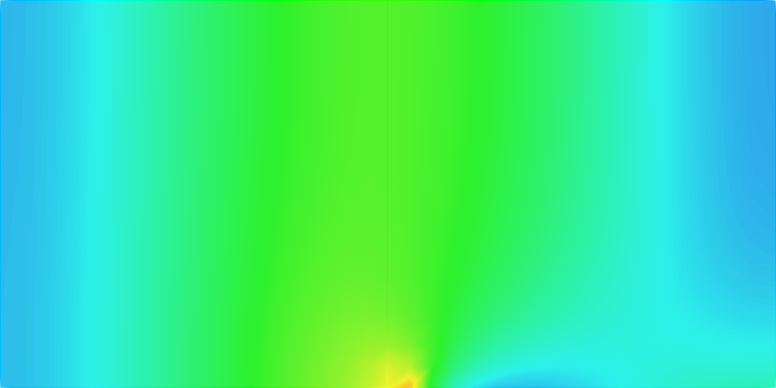}\,\includegraphics[width=3.2cm]{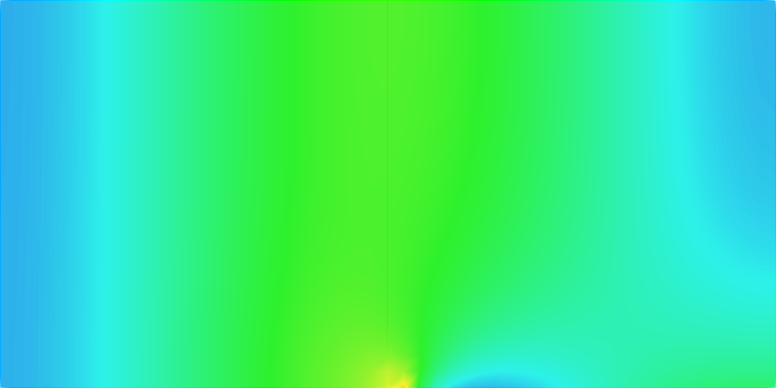}\,\includegraphics[width=3.2cm]{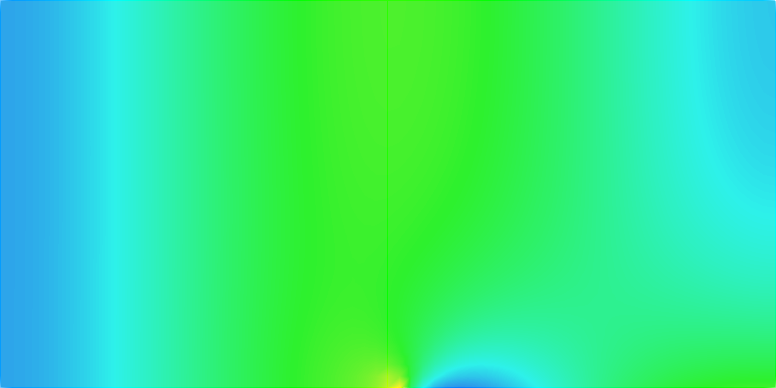}\\[3pt]
\includegraphics[width=3.2cm]{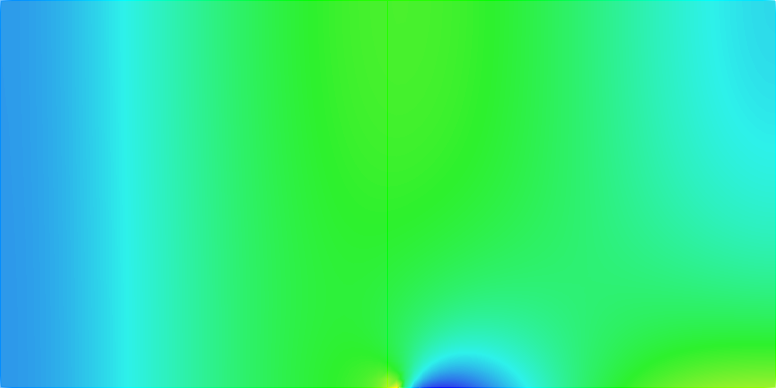}\,\includegraphics[width=3.2cm]{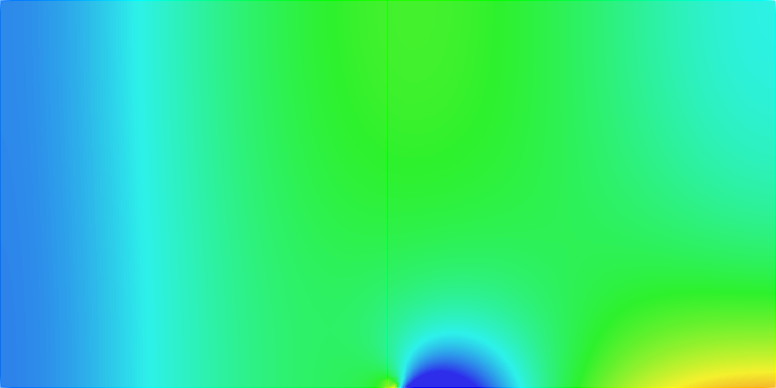}\,\includegraphics[width=3.2cm]{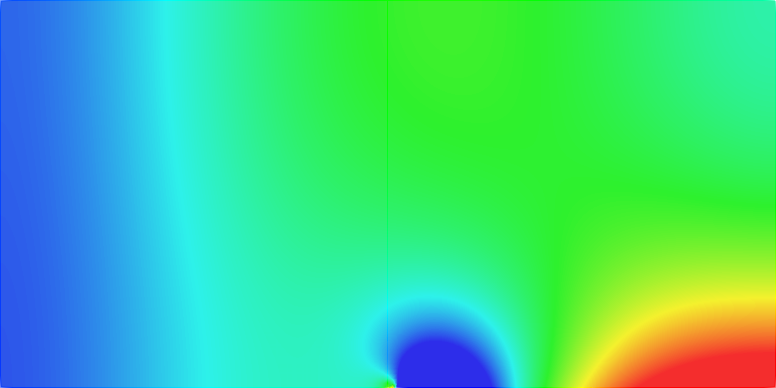}\,\includegraphics[width=3.2cm]{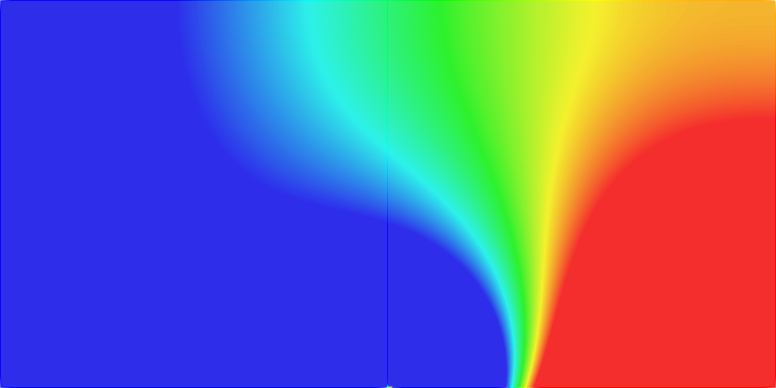}\,\includegraphics[width=3.2cm]{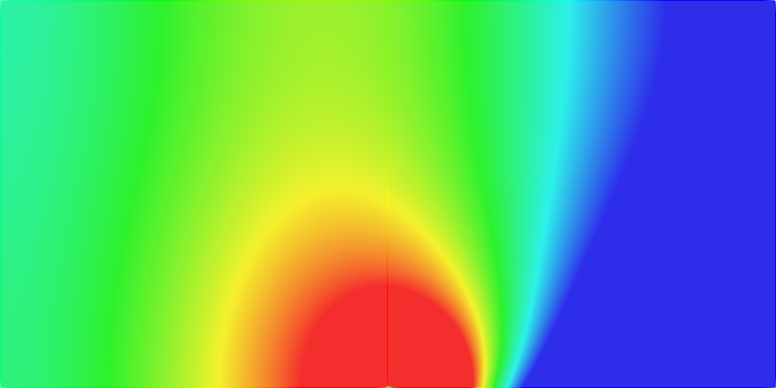}
\caption{Numerical resolution of (\ref{faible}) for ten different meshes with  $s=-1$ and $\alpha=1$.\label{Fig_sm1_alpha_1}}
\end{figure}

\begin{figure}[!ht]
\centering
\includegraphics[width=9cm]{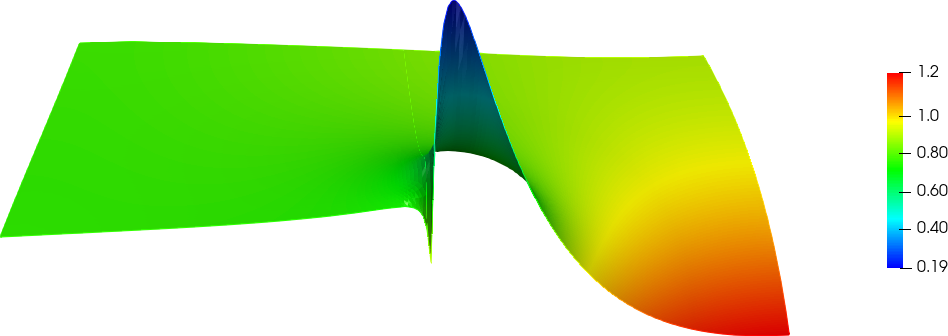}
\caption{Illustration of the singular behaviour of the numerical solution of (\ref{faible}) with  $s=-1$ and $\alpha=1$. Here the mesh is the same as in the 7th image of Figure \ref{Fig_sm1_alpha_1}.\label{Fig_sm1_singu}}
\end{figure}
\newpage

In Figure \ref{Fig_sm1_alpha_1p5}, we work with $\alpha=1.5$. Again, the numerical solution does not converge when we refine the mesh. This suggests that the problem at the continuous level is not well-posed in the Fredholm sense, a result that we have not been able to prove. 

\begin{figure}[!ht]
\centering
\includegraphics[width=3.2cm]{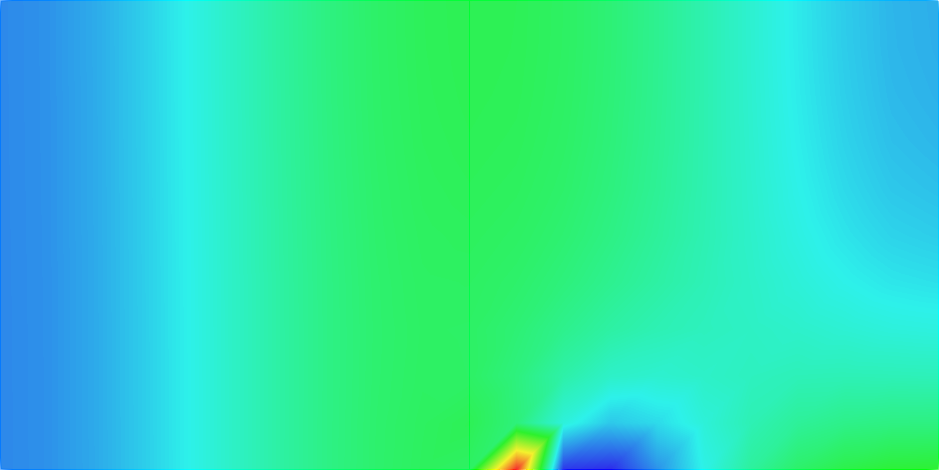}\,\includegraphics[width=3.2cm]{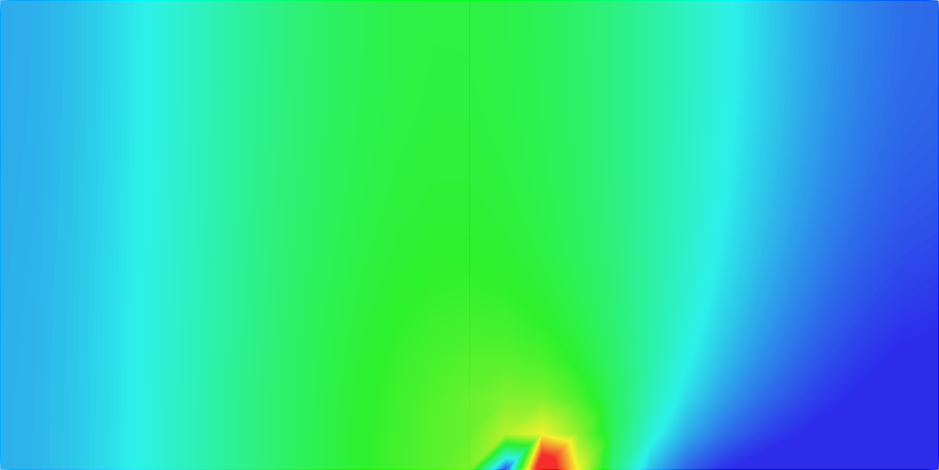}\,\includegraphics[width=3.2cm]{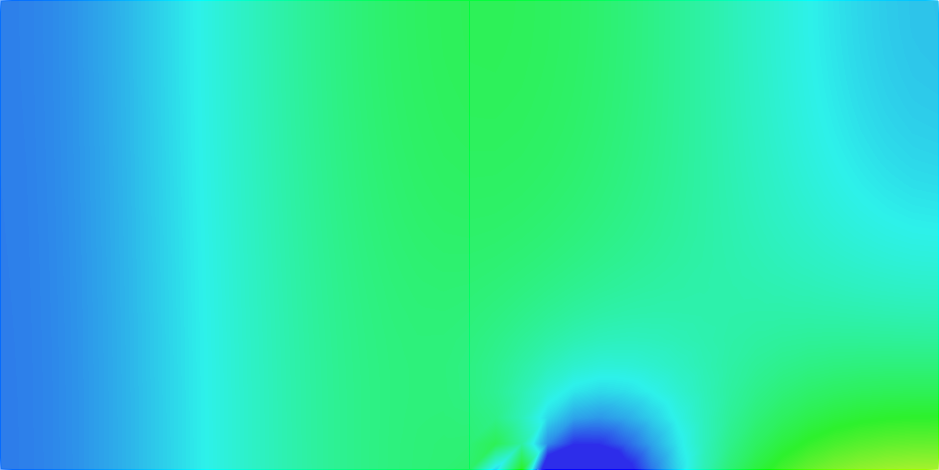}\,\includegraphics[width=3.2cm]{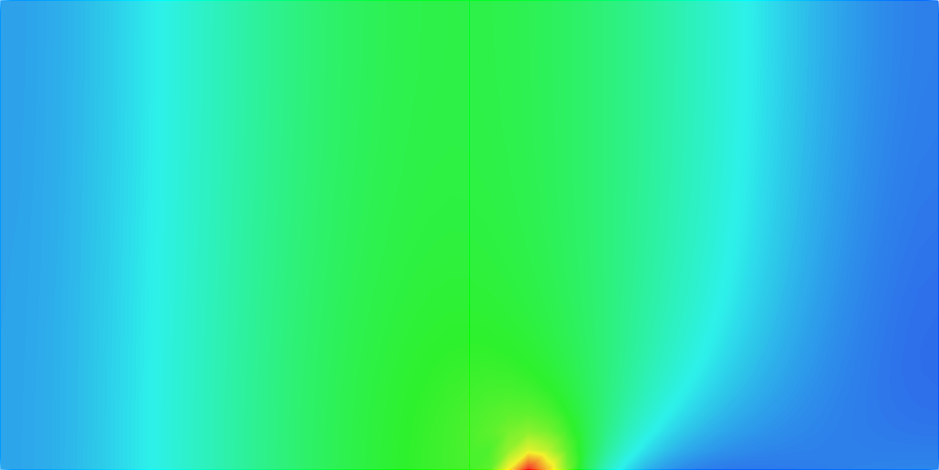}\,\includegraphics[width=3.2cm]{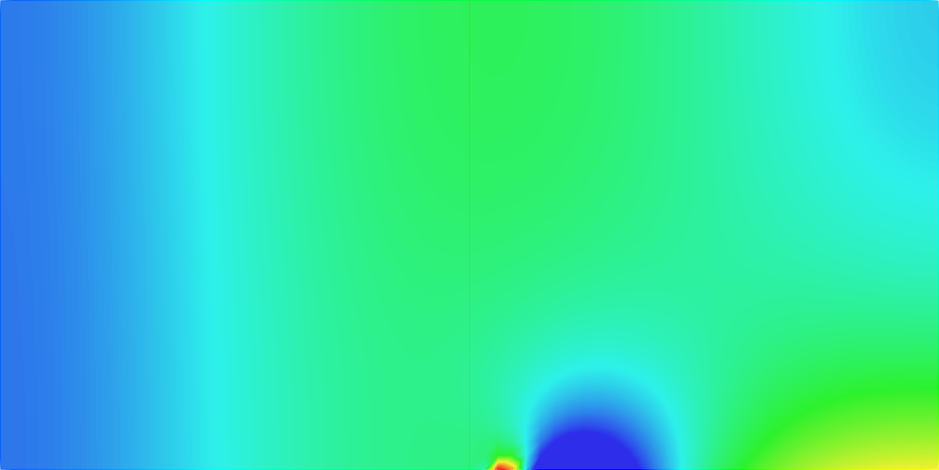}\\[3pt]
\includegraphics[width=3.2cm]{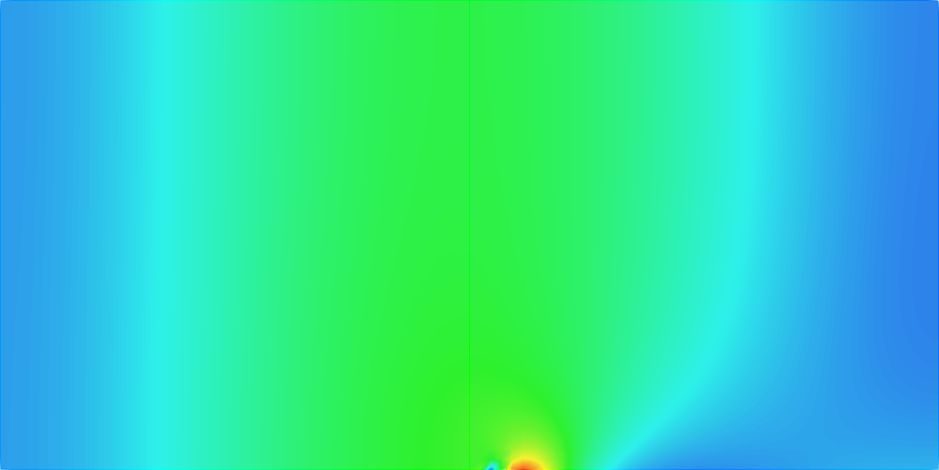}\,\includegraphics[width=3.2cm]{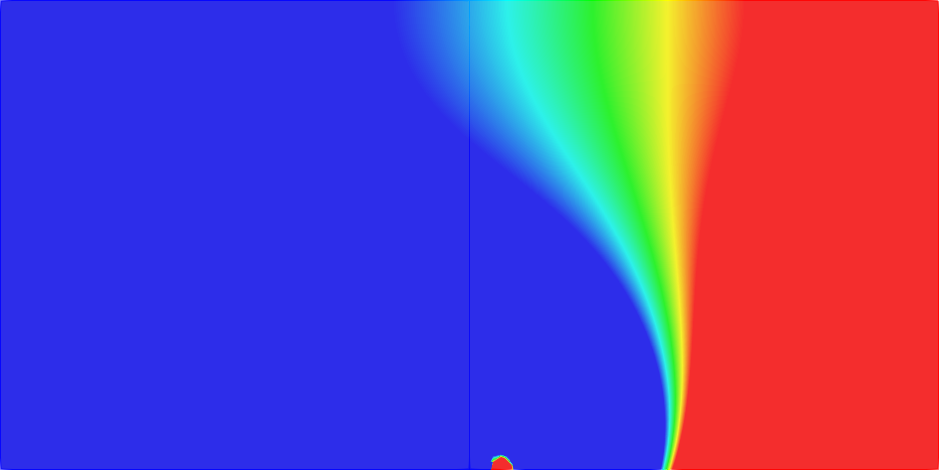}\,\includegraphics[width=3.2cm]{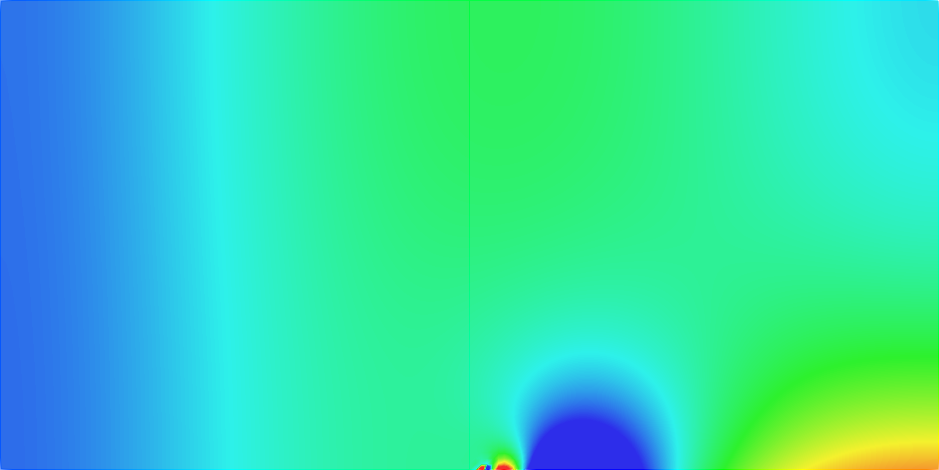}\,\includegraphics[width=3.2cm]{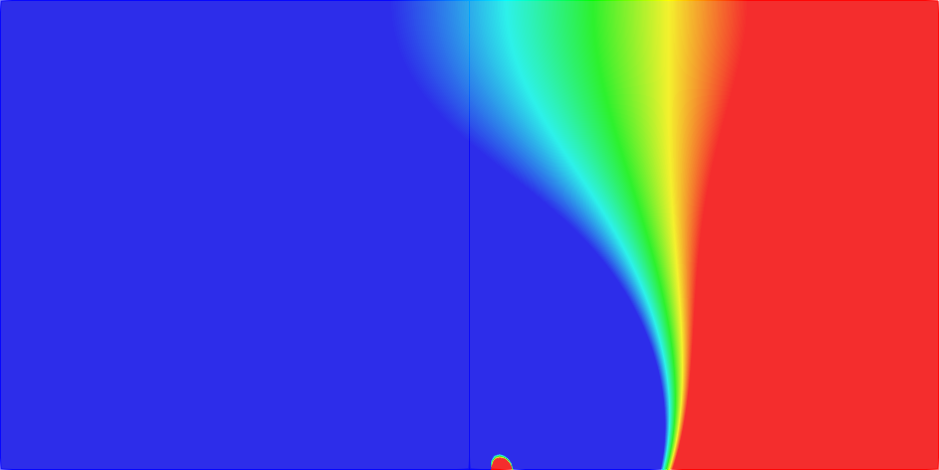}\,\includegraphics[width=3.2cm]{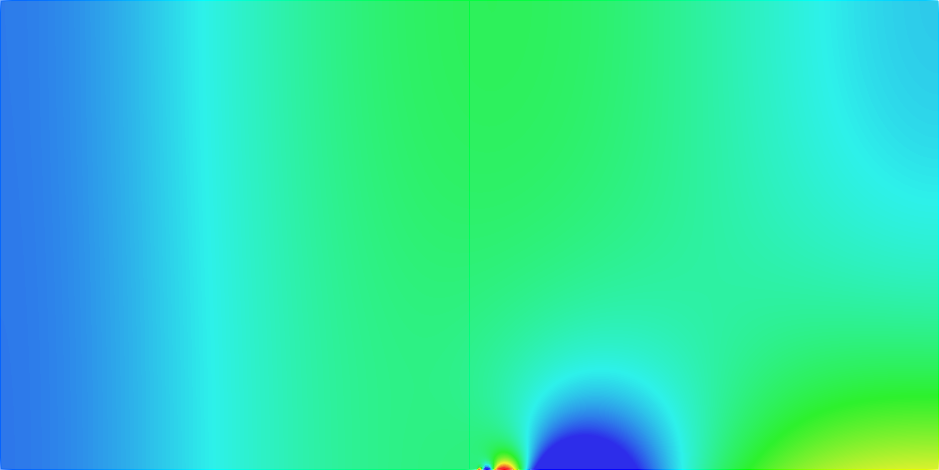}
\caption{Numerical resolution of (\ref{faible}) for ten different meshes with  $s=-1$ and $\alpha=1.5$.\label{Fig_sm1_alpha_1p5}}
\end{figure}

In Figure \ref{Fig_sp1_alpha_1p5}, we display results in the good sign case $s=1$ with $\alpha=1.5$. In agreement with Theorem \ref{good_sign}, which guarantees that (\ref{faible}) is well-posed, we observe that the numerical solution converges when the mesh is refined. In that situation, the corresponding sesquilinear form is coercive and we can apply the C\'ea's lemma. The question of the approximability of $\mV_\alpha(\Om)$ by usual Lagrange finite elements spaces however remains to be studied.

\begin{figure}[!ht]
\centering
\includegraphics[width=3.2cm]{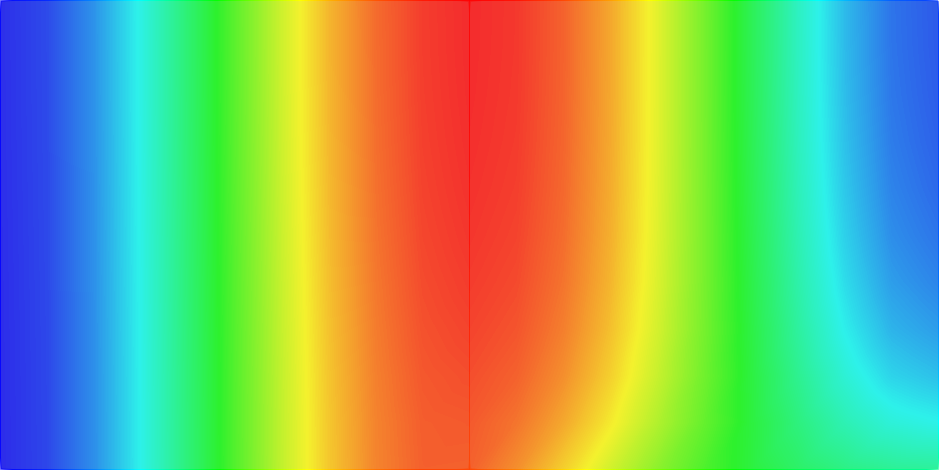}\,\includegraphics[width=3.2cm]{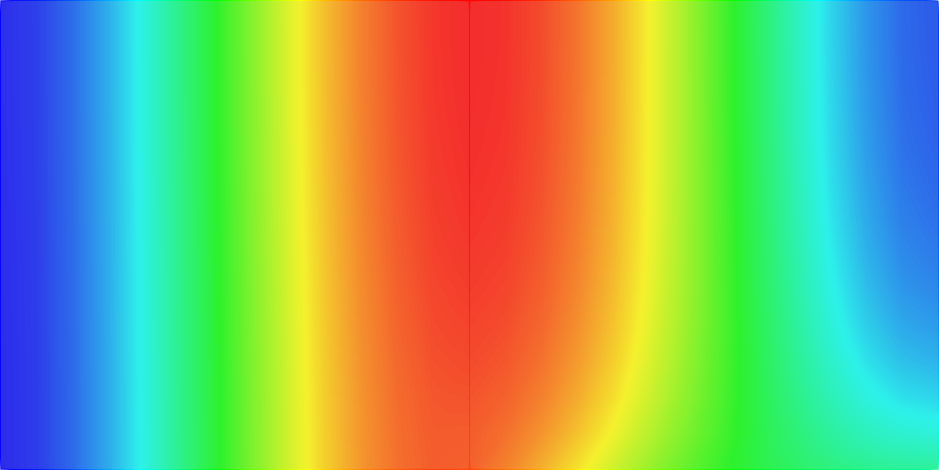}\,\includegraphics[width=3.2cm]{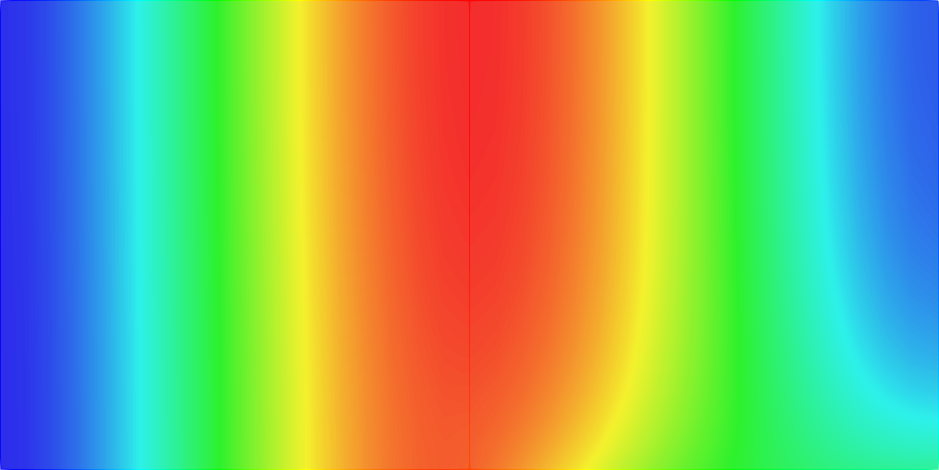}\,\includegraphics[width=3.2cm]{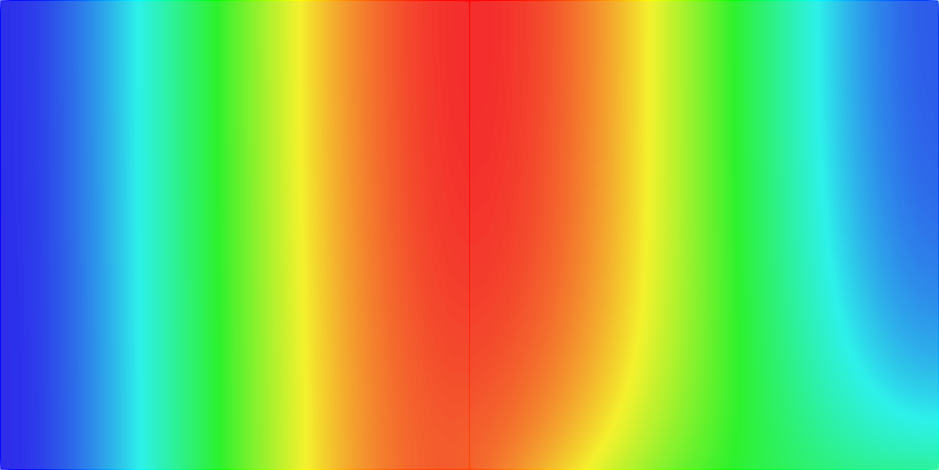}\,\includegraphics[width=3.2cm]{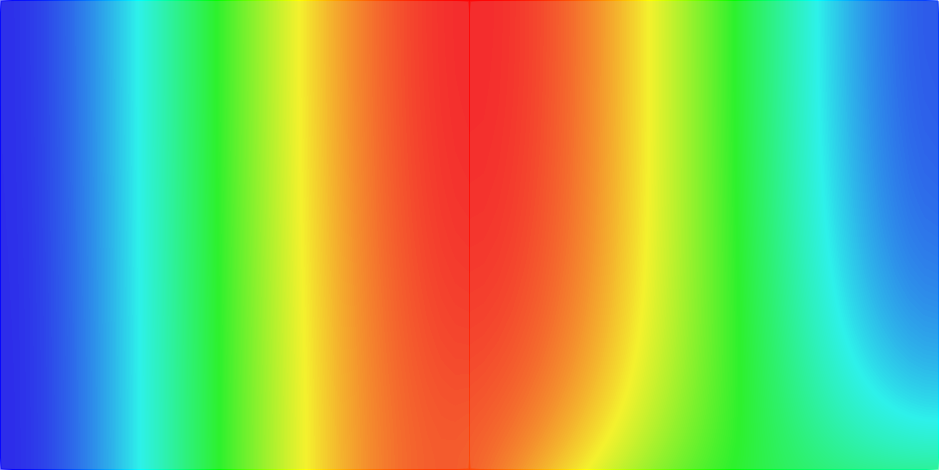}\\[3pt]
\includegraphics[width=3.2cm]{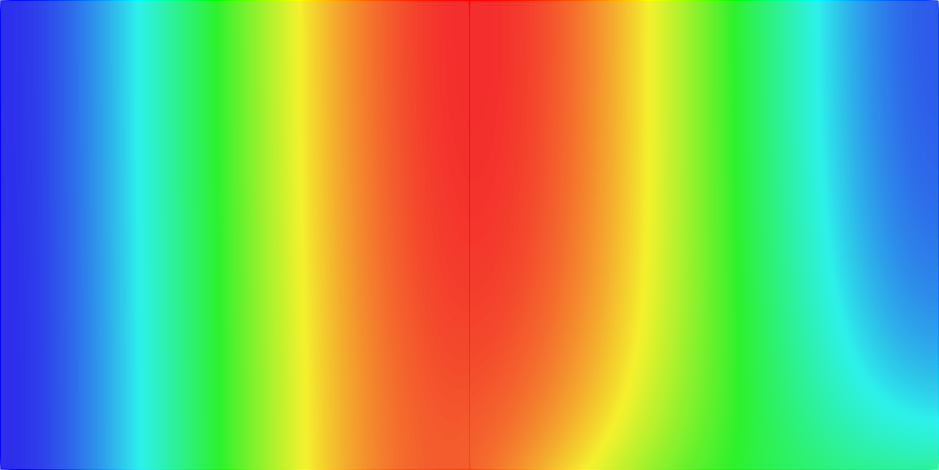}\,\includegraphics[width=3.2cm]{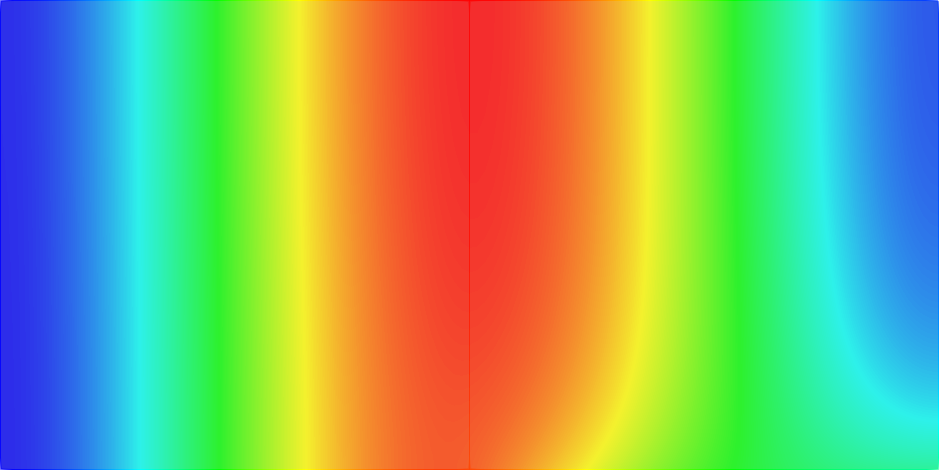}\,\includegraphics[width=3.2cm]{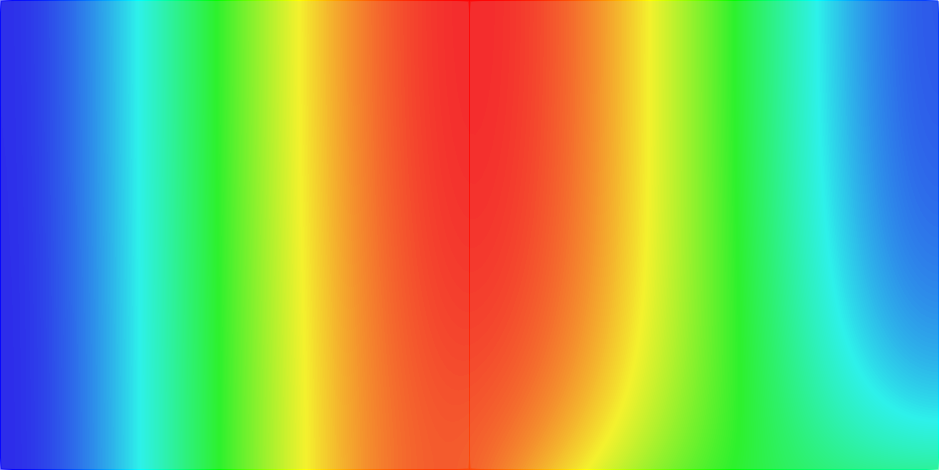}\,\includegraphics[width=3.2cm]{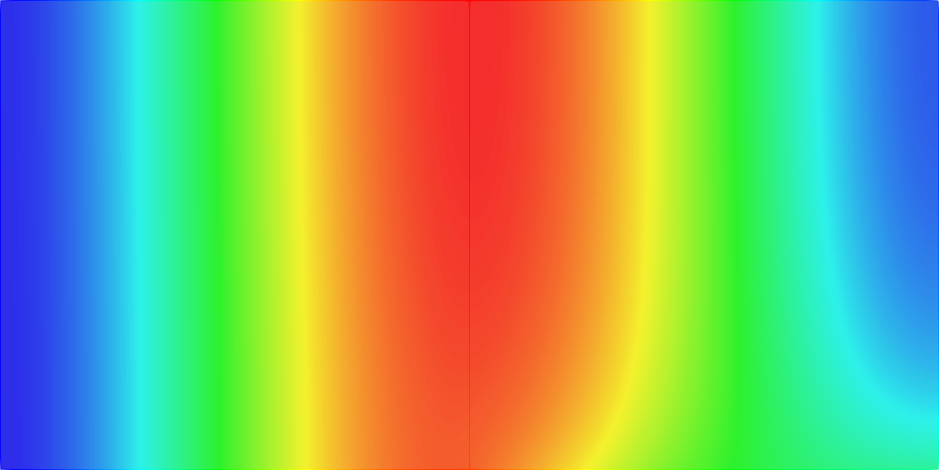}\,\includegraphics[width=3.2cm]{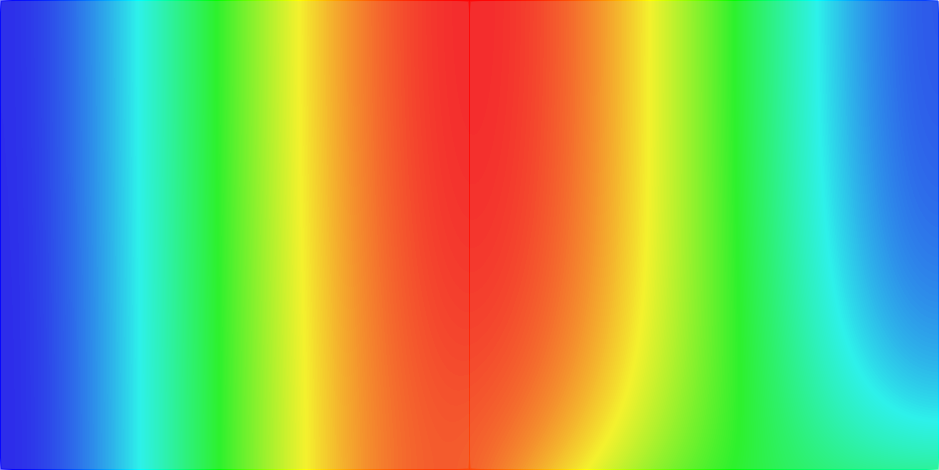}
\caption{Numerical resolution of (\ref{faible}) for ten different meshes with  $s=1$ and $\alpha=1.5$.\label{Fig_sp1_alpha_1p5}}
\end{figure}

Finally, in Figures \ref{Fig_Cha_alpha_0p5}--\ref{Fig_Cha_alpha_1p5}, we display results concerning Problem (\ref{faible bis}) which involves an impedance which is both vanishing and sign-changing. For $\alpha=0.5$, the numerical solution converges when the mesh is refined (Figure \ref{Fig_Cha_alpha_0p5}). This is not the case for $\alpha=1$ (Figure \ref{Fig_Cha_alpha_1}). These observations are in line with the statements of Theorems \ref{casgeneral_bis} and \ref{cas1_bis}. For $\alpha=1.5$ (Figure \ref{Fig_Cha_alpha_1p5}), the numerical solution does not converge either. Proving that the operator $B$ defined in (\ref{DefOpB}) is not of Fredholm type in that situation remains to be done.

\begin{figure}[!ht]
\centering
\includegraphics[width=3.2cm]{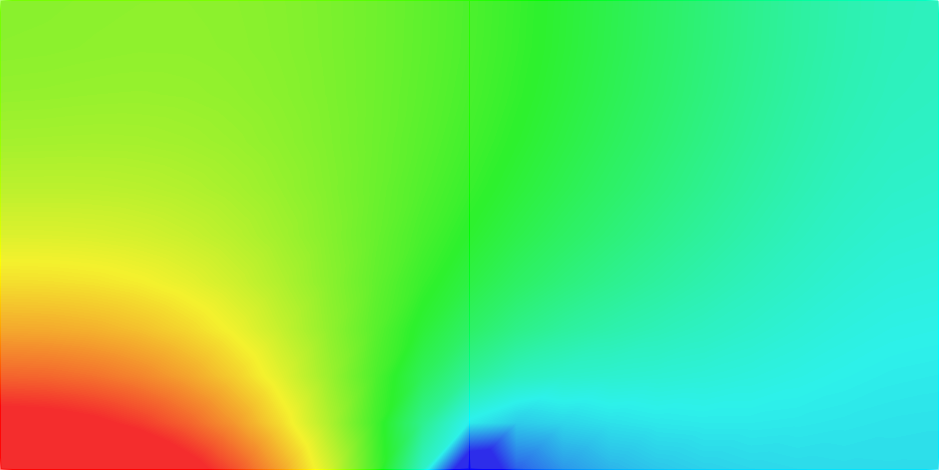}\,\includegraphics[width=3.2cm]{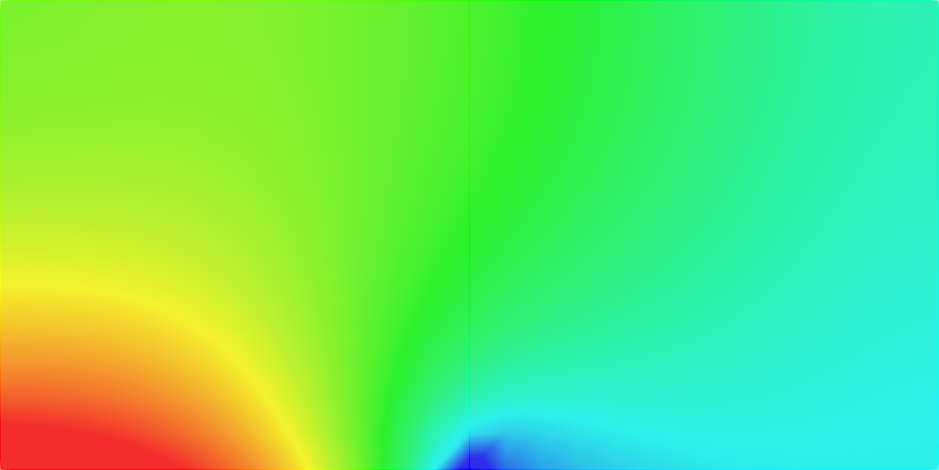}\,\includegraphics[width=3.2cm]{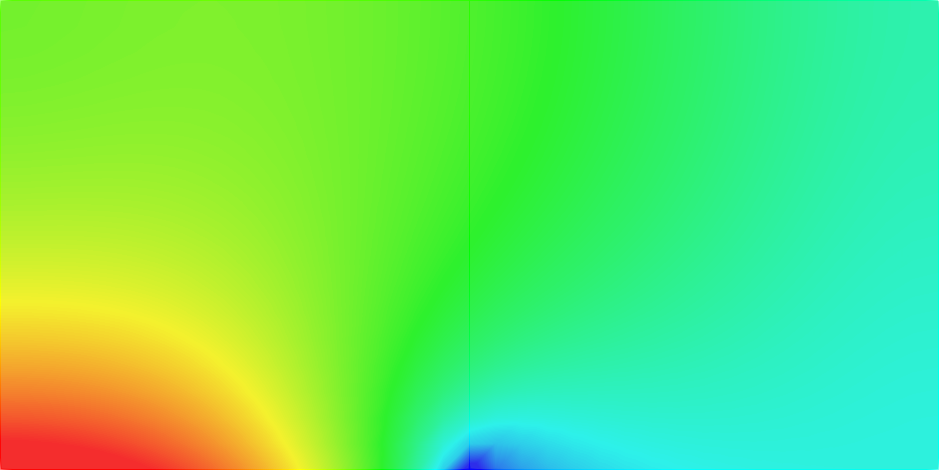}\,\includegraphics[width=3.2cm]{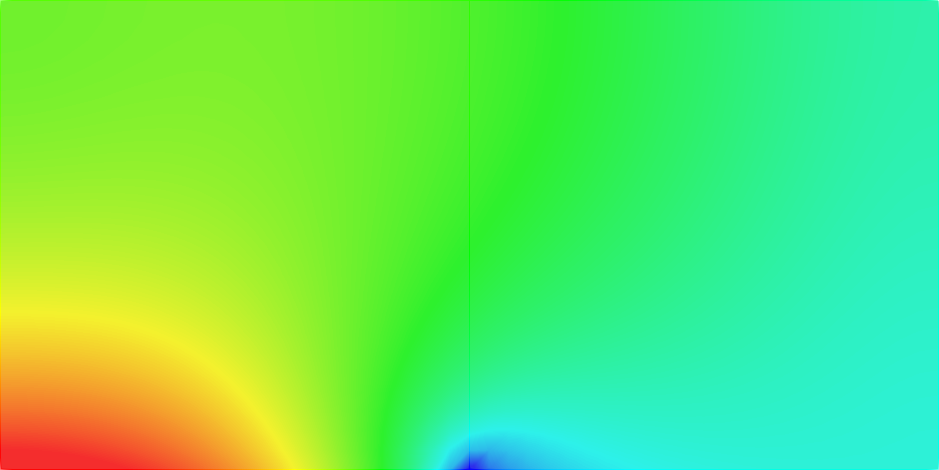}\,\includegraphics[width=3.2cm]{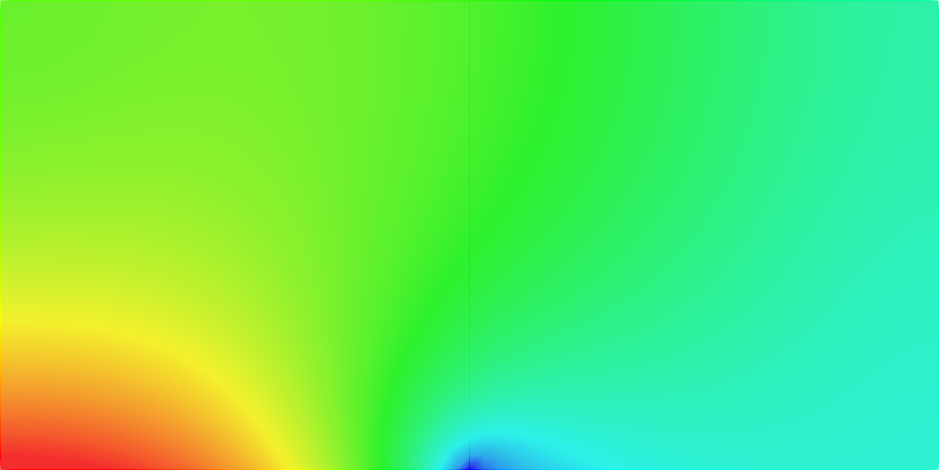}\\[3pt]
\includegraphics[width=3.2cm]{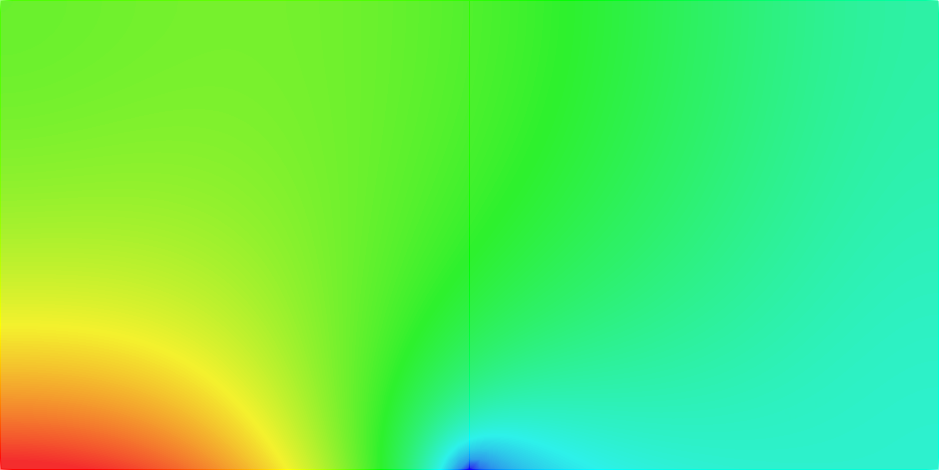}\,\includegraphics[width=3.2cm]{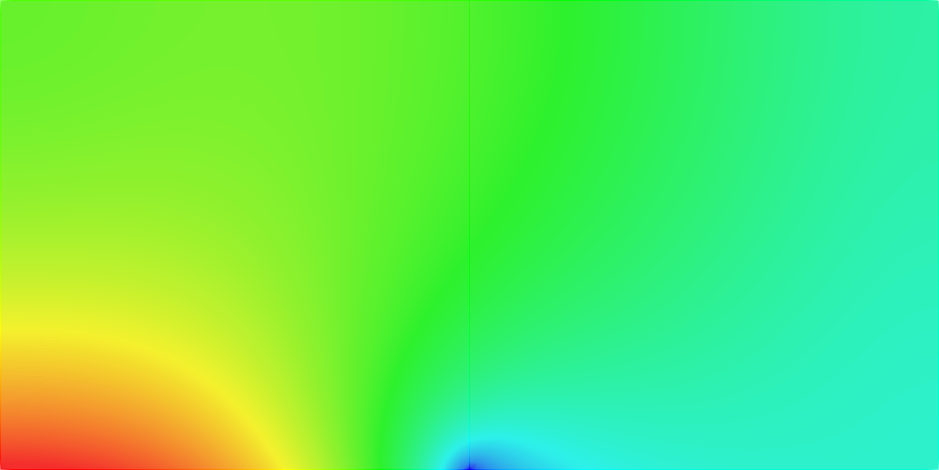}\,\includegraphics[width=3.2cm]{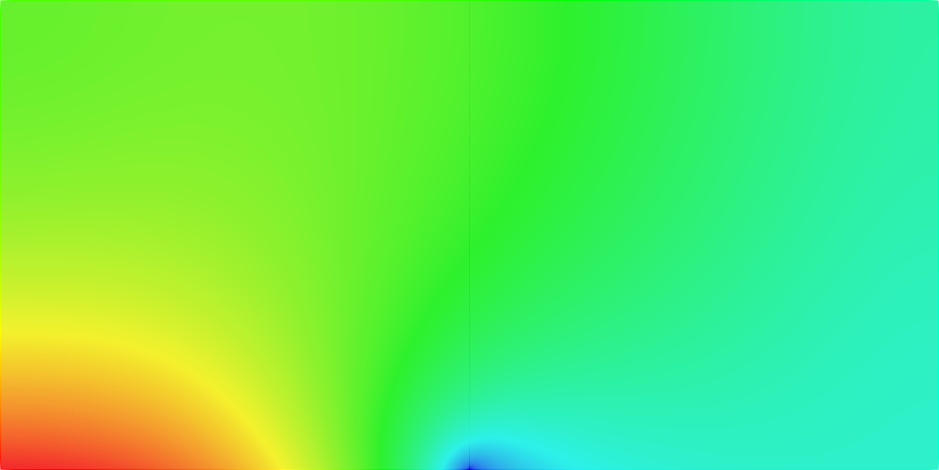}\,\includegraphics[width=3.2cm]{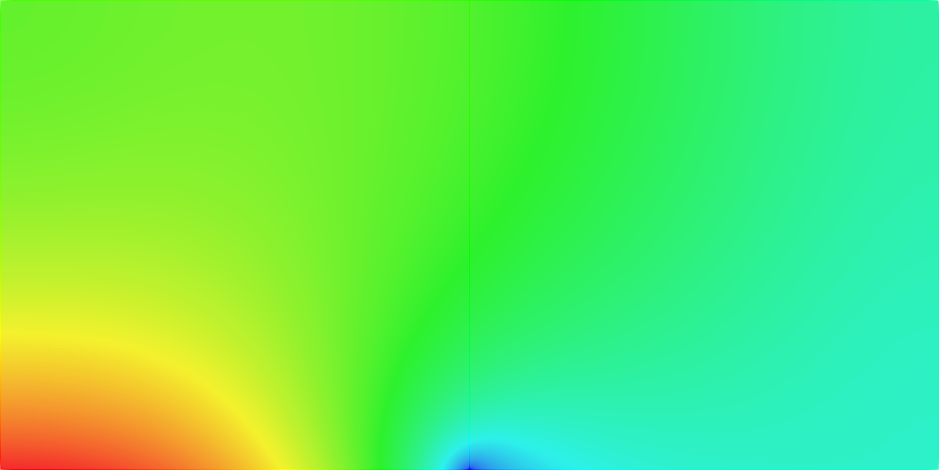}\,\includegraphics[width=3.2cm]{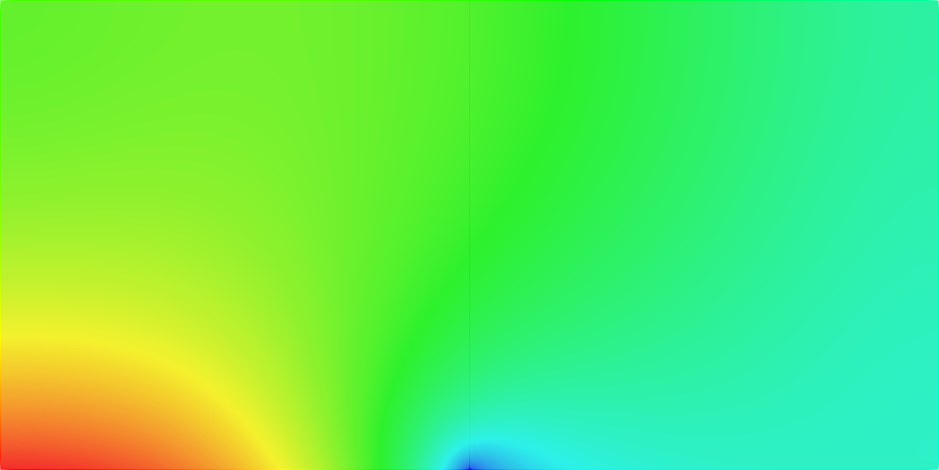}
\caption{Numerical resolution of (\ref{faible bis}) for ten different meshes with $\alpha=0.5$.\label{Fig_Cha_alpha_0p5}}
\end{figure}

\begin{figure}[!ht]
\centering
\includegraphics[width=3.2cm]{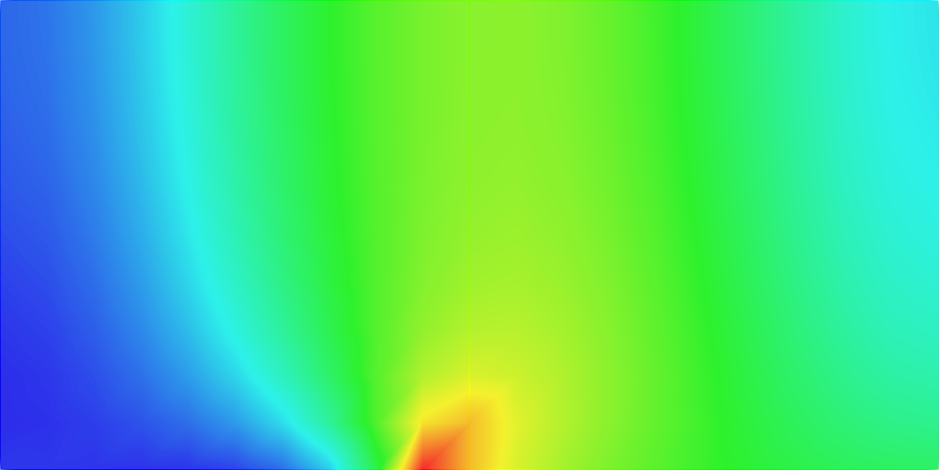}\,\includegraphics[width=3.2cm]{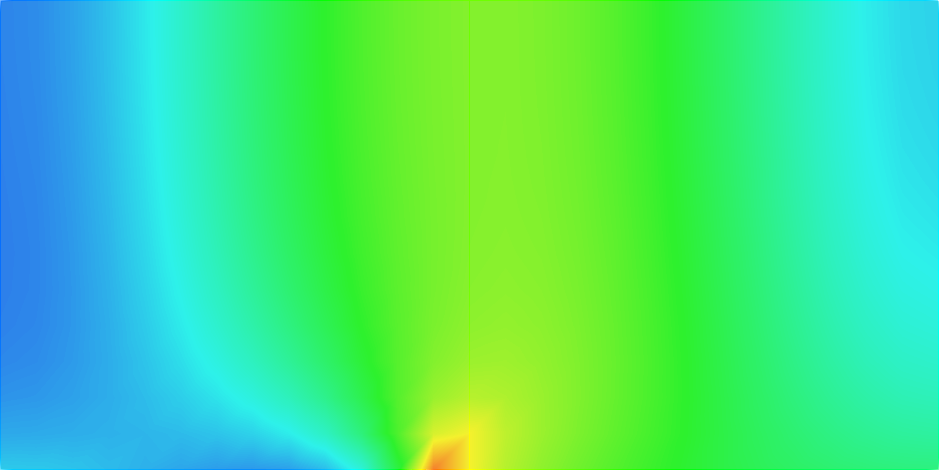}\,\includegraphics[width=3.2cm]{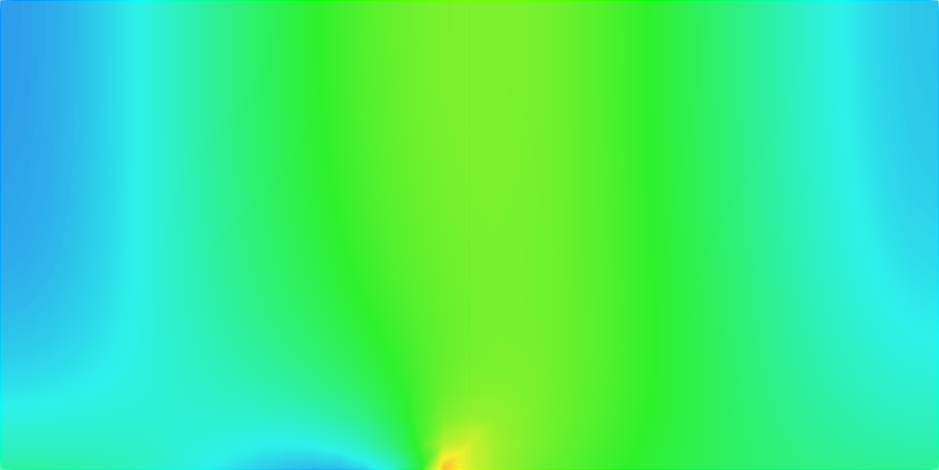}\,\includegraphics[width=3.2cm]{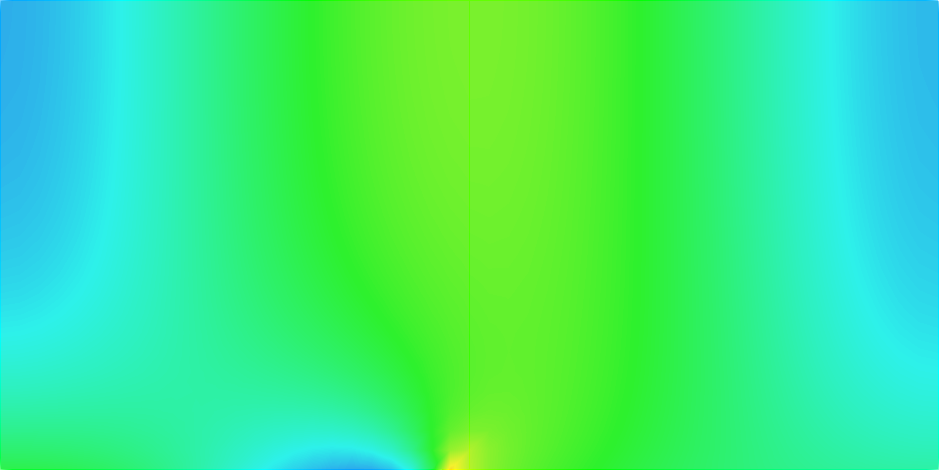}\,\includegraphics[width=3.2cm]{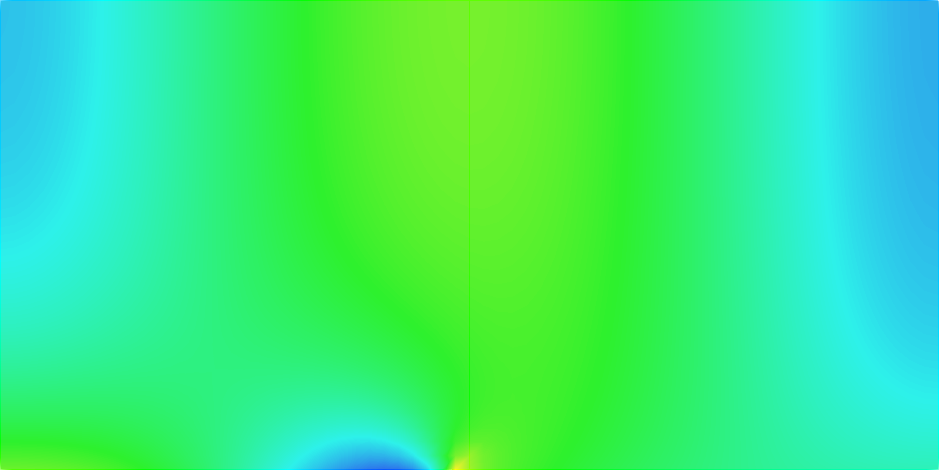}\\[3pt]
\includegraphics[width=3.2cm]{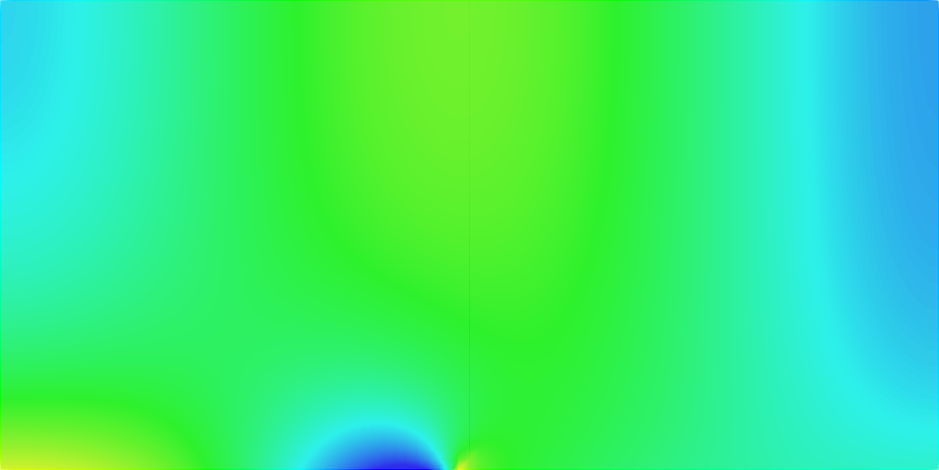}\,\includegraphics[width=3.2cm]{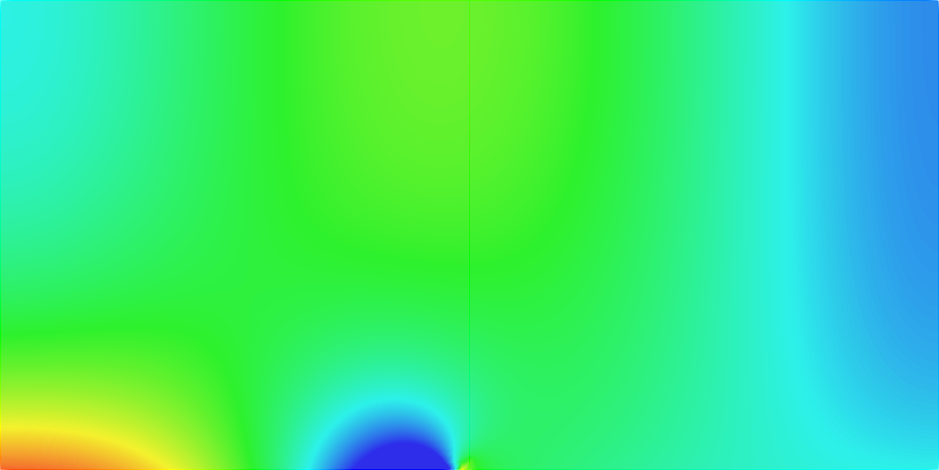}\,\includegraphics[width=3.2cm]{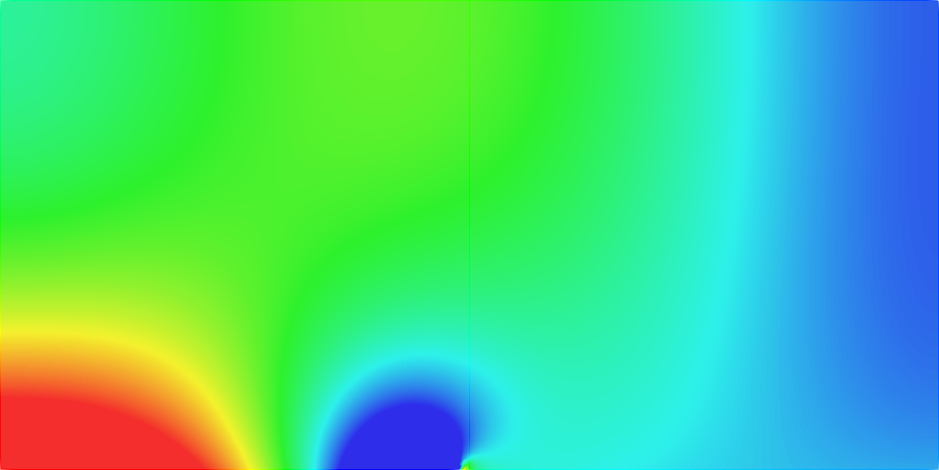}\,\includegraphics[width=3.2cm]{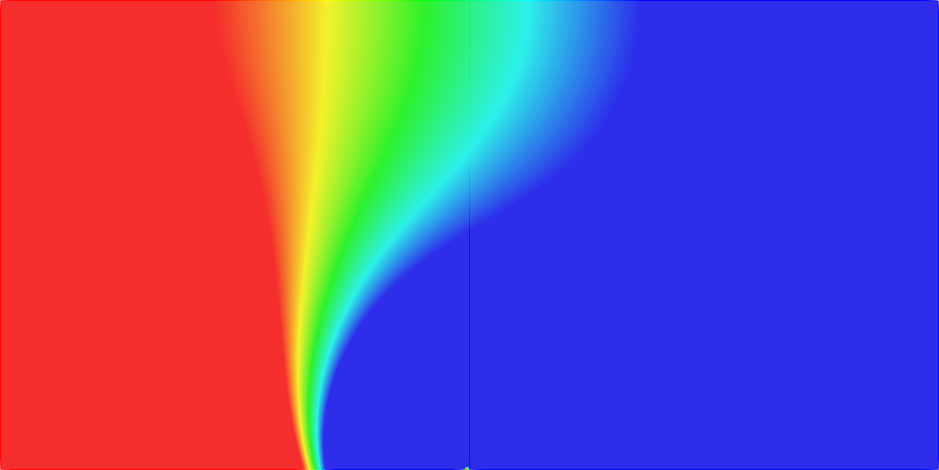}\,\includegraphics[width=3.2cm]{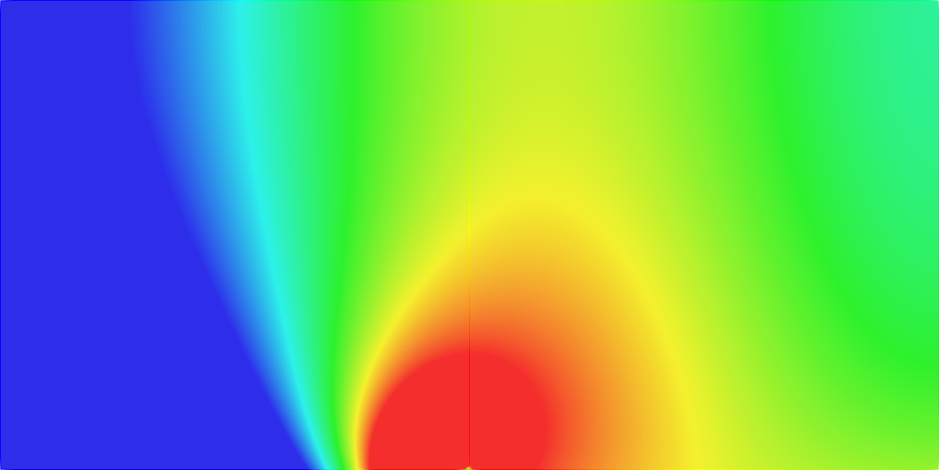}
\caption{Numerical resolution of (\ref{faible bis}) for ten different meshes with $\alpha=1$.\label{Fig_Cha_alpha_1}}
\end{figure}

\begin{figure}[!ht]
\centering
\includegraphics[width=3.2cm]{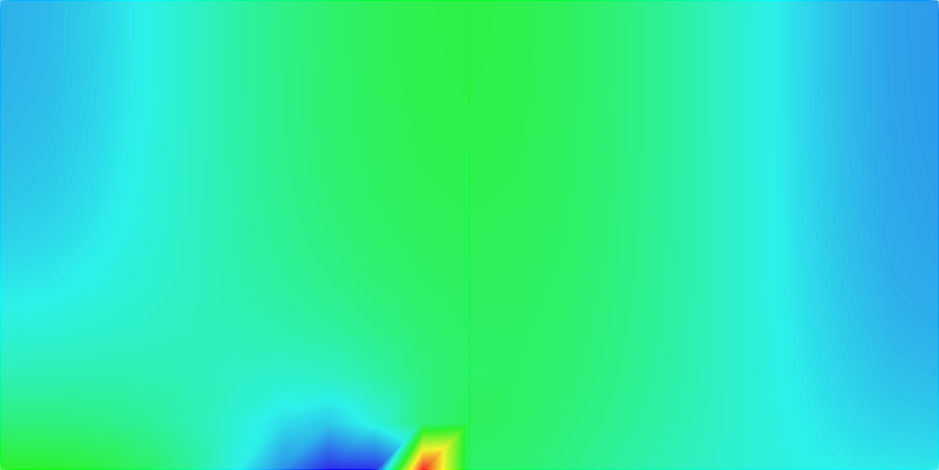}\,\includegraphics[width=3.2cm]{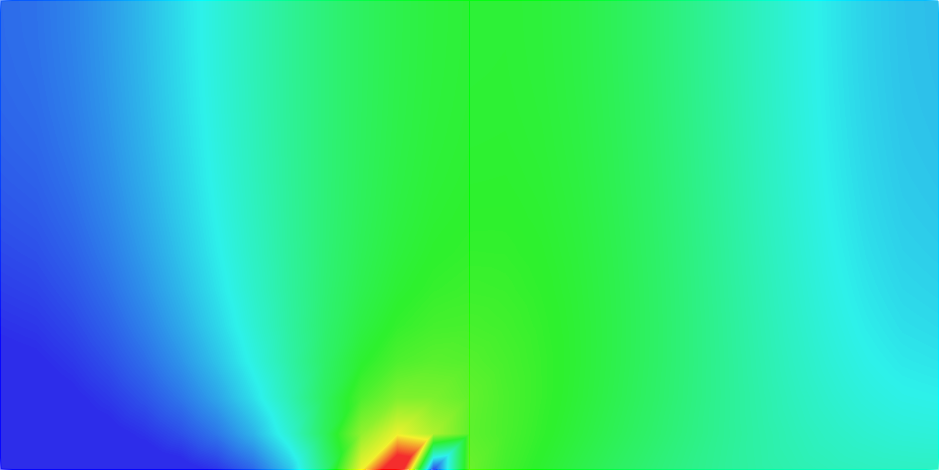}\,\includegraphics[width=3.2cm]{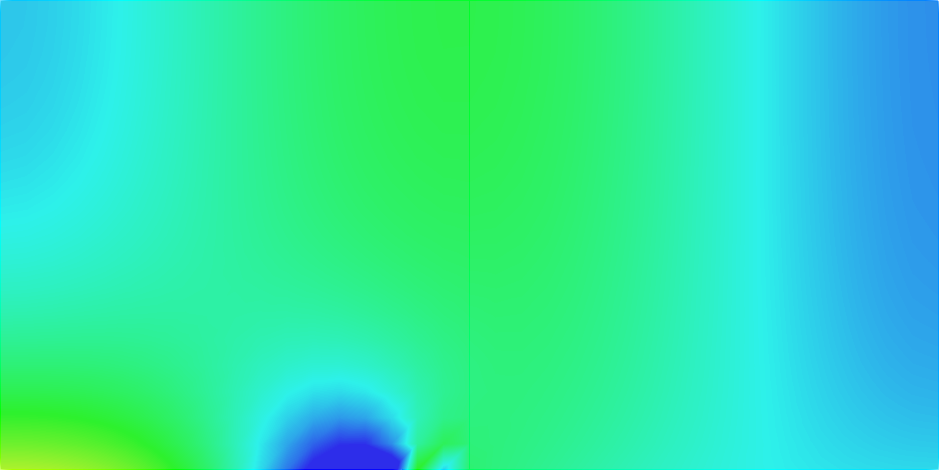}\,\includegraphics[width=3.2cm]{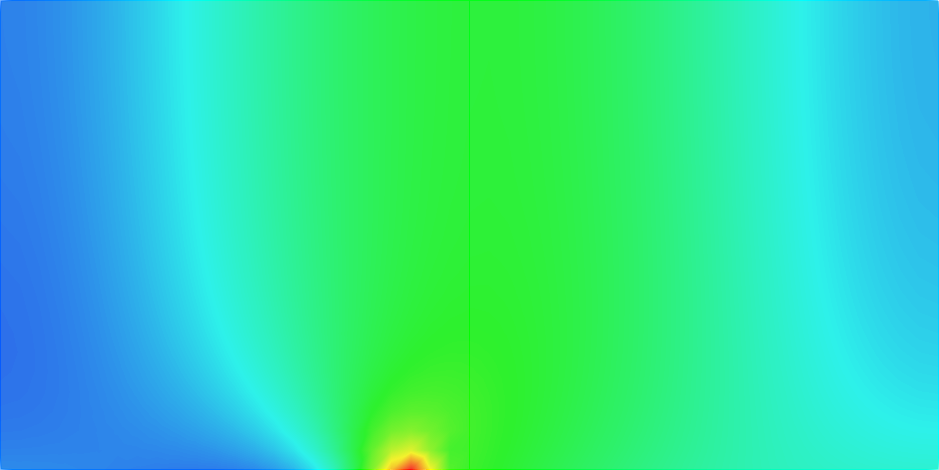}\,\includegraphics[width=3.2cm]{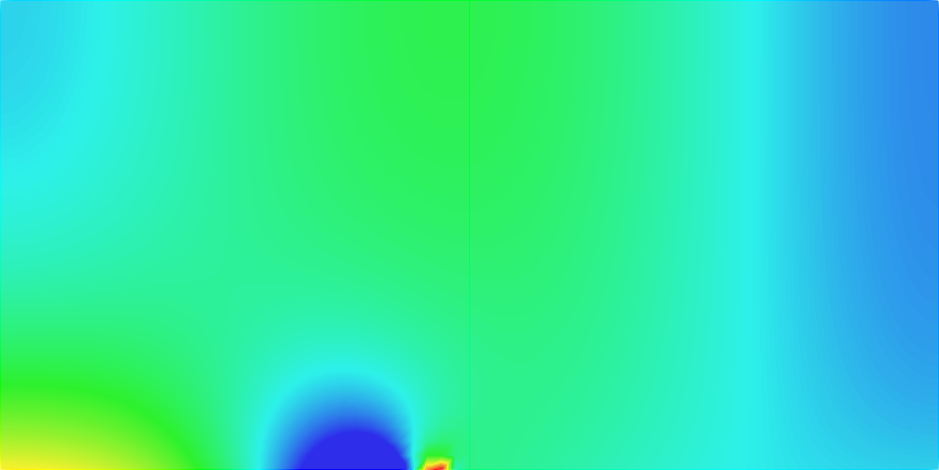}\\[3pt]
\includegraphics[width=3.2cm]{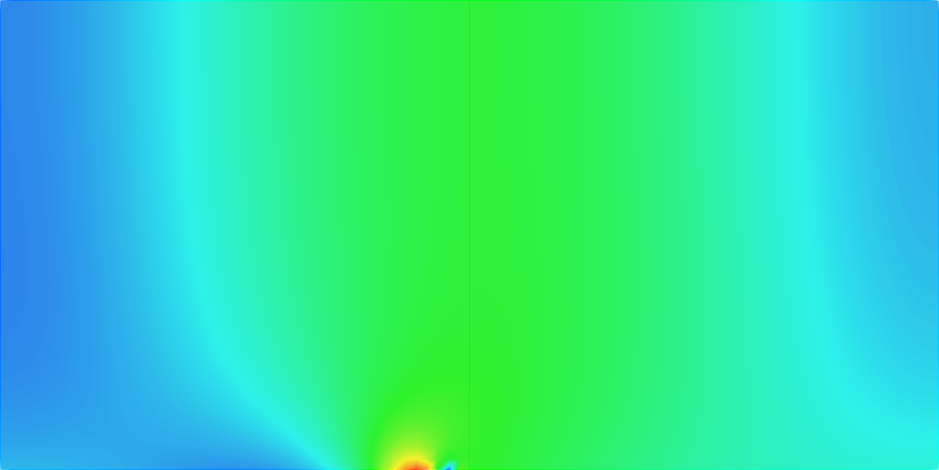}\,\includegraphics[width=3.2cm]{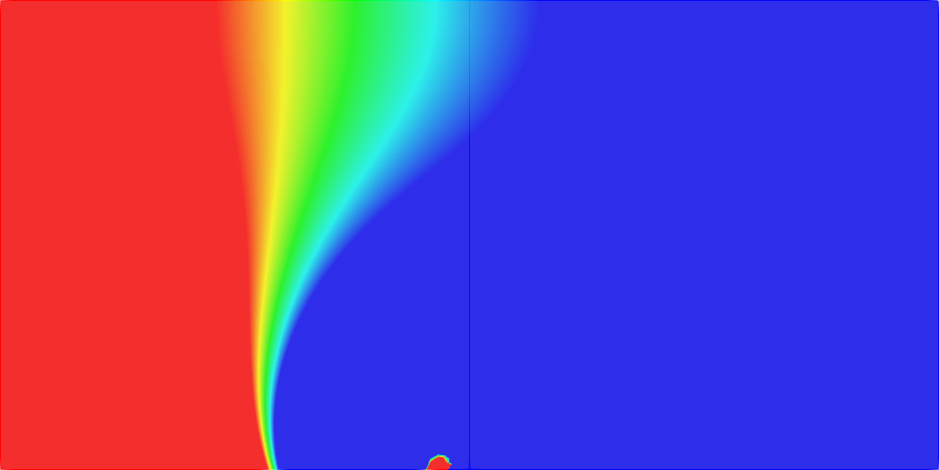}\,\includegraphics[width=3.2cm]{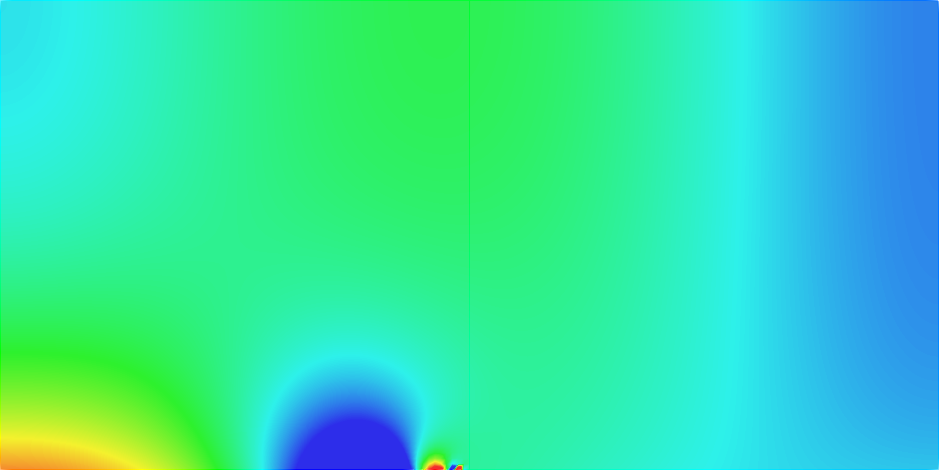}\,\includegraphics[width=3.2cm]{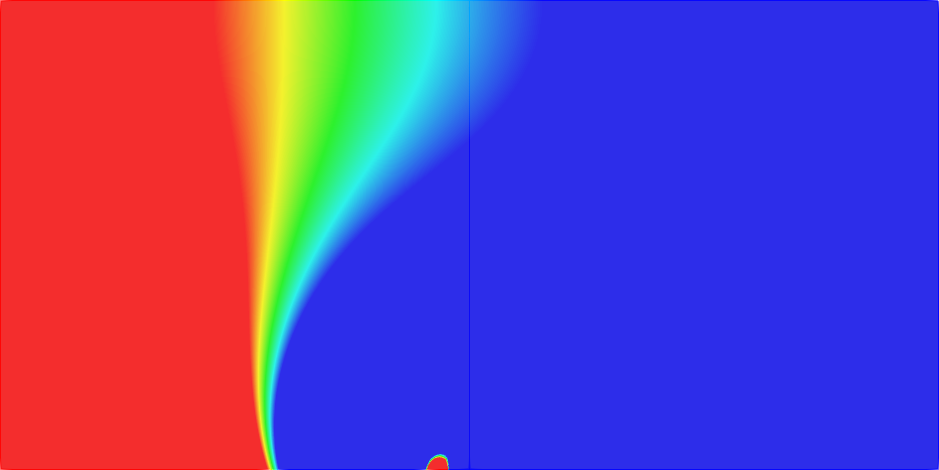}\,\includegraphics[width=3.2cm]{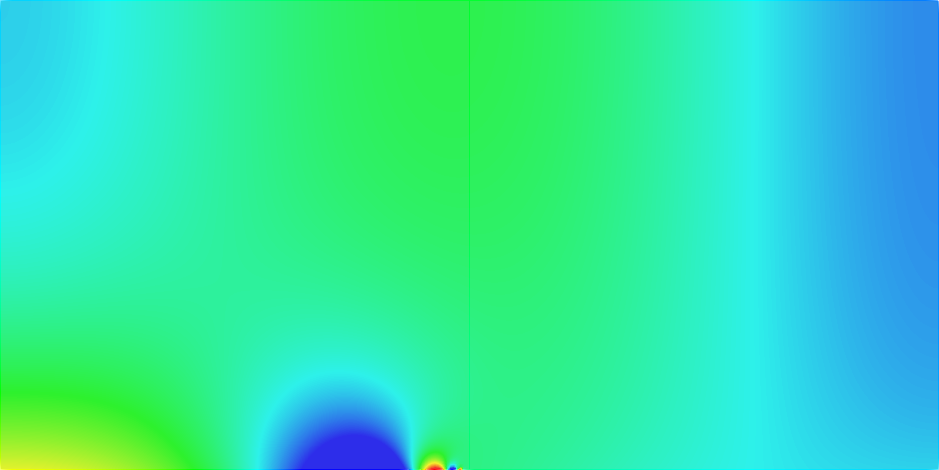}
\caption{Numerical resolution of (\ref{faible bis}) for ten different meshes with $\alpha=1.5$.\label{Fig_Cha_alpha_1p5}}
\end{figure}

\newpage
\section{Concluding remarks}\label{SectionConcludingRemarks}

Let us discuss here a few possible extensions and present some open questions following our study. In the bad sign case $s=-1$, we proved that the operators $A$, $B$ defined respectively in (\ref{DefOpA}), (\ref{DefOpB}) are Fredholm for $\alpha\in[0,1)$ and are not Fredholm for $\alpha=1$. For $s=-1$ and $\alpha>1$ (see Figure \ref{Fig_sm1_alpha_1p5}), we expect that $A$ and $B$ are not Fredholm but unfortunately, we are unable to establish this result. This is due to the fact that then there are no singularities with separate variables in polar coordinates at the origin. 

We worked with the equation $-\Delta u+u=f$ in $\Om$. For $\alpha\in[0,1]$, we could similarly consider the cases $-\Delta u=f$ or $-\Delta u-\om^2 u=f$ with $\om\in\Cplx$. This only induces compact perturbations and does not affect Fredholm properties for $A$ and $B$. Besides, we imposed generalized impedance boundary conditions on parts of the boundary which coincide with flat segments. This feature is used in particular to establish the compactness result of Proposition \ref{prop-compacite} (see (\ref{Def1DTo2D})) or to compute the singularities in (\ref{system}). When $\Gamma_\pm$ are smooth curves, we do not expect significant differences in the results compared to what we have obtained. However the proofs need to be written rigorously. 

In this study, for $s=-1$, we did not consider the question of injectivity of $A$ and $B$. For $\alpha\in[0,1)$, due do the Fredholm property, we know that these operators have a kernel of finite dimension. However showing that this kernel reduces to the null function is an open question. We do not expect that this occurs in all geometries but even finding a simple $\Om$ where we can prove that this is true reveals complications because we can not use separation of variables due to the form of the operators. We focused our attention on the 2D case. In 3D, the singularities are different and the analysis must be adapted. Additionally, in 3D one could consider situations where the impedance, which is then a function of two variables, vanishes on a line and not only at a point. The study of the problem in such a circumstance is completely open. Finally, in Section \ref{Section_Num} we presented simple numerical experiments whose results seem in accordance with our theorems. However there is no justification here. It would be interesting to establish results of convergence for our numerical methods when the mesh is refined and when $A$, $B$ are isomorphisms.

\bibliography{Biblio}
\bibliographystyle{plain}
\end{document}